\numberwithin{equation}{section}
\newtheorem{Th}[subsection]{Theorem}
\newtheorem*{Th*}{Theorem}
\newtheorem{Lemma}[subsection]{Lemma}
\newtheorem{Prop}[subsection]{Proposition}
\newtheorem{Cor}[subsection]{Corollary}
\theoremstyle{definition}
\newtheorem{definition}[subsection]{Definition}
\newtheorem*{definition*}{Definition}
\newtheorem{Remark}[subsection]{Remark}
\newtheorem{Example}[subsection]{Example}
\newcommand{\comm}[1]{}
\definecolor{DarkGreen}{rgb}{0,0.5,0.1} 
\newcommand\soutD{\bgroup\markoverwith
{\textcolor{DarkGreen}{\rule[.5ex]{2pt}{1pt}}}\ULon}
\newcommand*{\rom}[1]{\expandafter\@slowromancap\romannumeral #1@}
\begin{document}
 
\title[Birational automorphism groups of Severi--Brauer surfaces over $\mathbb{Q}$]{Birational automorphism groups of Severi--Brauer surfaces over the field of rational numbers}
\author{Anastasia V.~Vikulova}
\address{{\sloppy
\parbox{0.9\textwidth}{
Steklov Mathematical Institute of Russian
Academy of Sciences,
8 Gubkin str., Moscow, 119991, Russia
}\bigskip}}
\email{vikulovaav@gmail.com}
\date{}
\maketitle

\begin{abstract}
We prove that the only non-trivial finite subgroups of birational automorphism group of non-trivial Severi--Brauer surfaces over the field of rational numbers are~$\mathbb{Z}/3\mathbb{Z}$ and $(\mathbb{Z}/3\mathbb{Z})^2.$ Moreover,  we show that $(\mathbb{Z}/3\mathbb{Z})^2$  is contained in $\mathrm{Bir}(V)$ for any Severi--Brauer surface $V$ over a field of characteristic different from $2$ and $3$, and $(\mathbb{Z}/3\mathbb{Z})^3$ is contained in $\mathrm{Bir}(V)$ for any Severi--Brauer surface~$V$ over a field of characteristic different from $2$ and $3$ which contains a non-trivial cube root of unity. 
\end{abstract}

	\section{Introduction}

The Cremona group $\mathrm{Cr}_n(\mathbf{F})$  is a group of birational automorphisms of $\mathbb{P}^n$ over a field $\mathbf{F}.$ It is difficult to describe this group, except the case $n=1,$ when we have~\mbox{$\mathrm{Cr}_1(\mathbf{F}) \simeq \mathrm{PGL}_2(\mathbf{F}).$} Even the classification of finite subgroups seems extremely hard. Nowadays, we know the description of conjugacy classes of finite subgroups only for $\mathrm{Cr}_2(\mathbb{C})$  (see~\cite{DI}).

It is natural to ask how birational automorphisms of forms of projective spaces behave. 

\begin{definition}
An $n$-dimensional variety $V$ over a field $\mathbf{F}$ is called a \textit{ Severi--Brauer variety} if 
$$
V \times_{\mathrm{Spec}(\mathbf{F})}\mathrm{Spec}(\overline{\mathbf{F}}) \simeq \mathbb{P}^n_{\overline{\mathbf{F}}},
$$

\noindent where $\overline{\mathbf{F}}$ is an algebraic closure of $\mathbf{F}.$ Such a variety $V$ is called non-trivial if it is not isomorphic to $\mathbb{P}^n_{\mathbf{F}}.$
\end{definition}

Like $\mathrm{Cr}_n(\mathbf{F}),$ the group $\mathrm{Bir}(V)$ of  birational automorphisms of a Severi--Brauer variety $V$ also has a complicated structure (cf.~\cite{IskTregub} and~\cite{Weinstein}). A classification of finite groups that appear as subgroups of $\mathrm{Bir}(V)$ for non-trivial Severi--Brauer surfaces over various fields of characteristic zero was given in~\cite[Theorem 1.3]{Shramov1}. On the other hand, there is no simple way to decide which of them are realized for a given field, or for a given Severi--Brauer surface.


\begin{Th}[{\cite[Corollary 1.4]{Shramov1}}]\label{ThofShramov}
Let $V$ be a non-trivial Severi--Brauer surface over $\mathbb{Q}$   and let $G \subset \mathrm{Bir}(V)$ be a finite subgroup. Then we have~\mbox{$G \subset (\mathbb{Z}/3\mathbb{Z})^3.$}
\end{Th}

The goal of this paper is to prove the following result which is a strengthening of Theorem~\ref{ThofShramov}.

\begin{Th}\label{Z/3Z}
Let $V$ be a non-trivial Severi--Brauer surface  over the field $\mathbb{Q}$ and let $G$ be a finite group. Then $G$ is isomorphic to a subgroup of $\mathrm{Bir}(V)$ if and only if~\mbox{$G \subset (\mathbb{Z}/3\mathbb{Z})^2,$} and $G$ is isomorphic to a subgroup of $\mathrm{Aut}(V)$ if and only if~\mbox{$G \subset \mathbb{Z}/3\mathbb{Z}.$}
\end{Th}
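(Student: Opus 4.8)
The plan is to prove both biconditionals by pairing the upper bound already supplied by Theorem~\ref{ThofShramov} with an arithmetic argument that strips off one factor of $\mathbb{Z}/3\mathbb{Z}$ in the absence of cube roots of unity. Throughout I identify $V$ with the Severi--Brauer variety of a central division algebra $D$ of degree $3$ over $\mathbf{F}$, so that $\mathrm{Aut}(V)(\mathbf{F}) \cong D^\times/\mathbf{F}^\times = \mathrm{PGL}_1(D)(\mathbf{F})$. Since $3$ is odd, $\mathbf{F}$ contains no primitive cube root of unity $\omega$, and because $[\mathbf{F}(\omega):\mathbf{F}]\in\{1,2\}$ cannot divide $3$ nontrivially, no cubic extension of $\mathbf{F}$ contains one either; this is the single field-theoretic input I rely on. For the two ``if'' directions it suffices to exhibit $(\mathbb{Z}/3\mathbb{Z})^2\subset\mathrm{Bir}(V)$ and $\mathbb{Z}/3\mathbb{Z}\subset\mathrm{Aut}(V)$, which are exactly the general realization statements recorded in the abstract (valid for perfect fields of characteristic $\neq 2,3$): the copy of $\mathbb{Z}/3\mathbb{Z}$ in $\mathrm{Aut}(V)$ comes from the cyclic presentation $D=(L/\mathbf{F},\sigma,a)$, whose generator $e$ with $e^3=a\in\mathbf{F}^\times$ has image of order $3$ in $D^\times/\mathbf{F}^\times$, and the second factor is the additional birational $\mathbb{Z}/3\mathbb{Z}$ commuting with it.

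For the ``only if'' direction in the automorphism case, let $G\subset\mathrm{Aut}(V)$ be finite; by Theorem~\ref{ThofShramov} it is an elementary abelian $3$-group, and I claim its rank is at most $1$. Suppose $\bar g,\bar h\in D^\times/\mathbf{F}^\times$ are commuting elements of order $3$; lifting them, $g^3,h^3\in\mathbf{F}^\times$ and $ghg^{-1}h^{-1}=\zeta\in\mathbf{F}^\times$. Conjugating the central element $h^3$ by $g$ gives $\zeta^3 h^3 = g h^3 g^{-1} = h^3$, so $\zeta^3=1$ and hence $\zeta=1$, i.e. $g$ and $h$ commute in $D$. Then $\mathbf{F}(g,h)$ is a commutative subfield of the division algebra $D$, necessarily a maximal (cubic) subfield $L$, so both $\bar g$ and $\bar h$ lie in $L^\times/\mathbf{F}^\times$. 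Passing to $L\otimes_{\mathbf{F}}\mathbf{F}(\omega)$ and applying Kummer theory to the resulting cyclic cubic extension, one sees that the $3$-torsion of $L^\times/\mathbf{F}^\times$ is cyclic, whence $\bar h\in\langle\bar g\rangle$. Thus $G$ has rank at most $1$, and combined with the realization this gives the $\mathrm{Aut}$ statement.

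It remains to bound the rank of a finite (elementary abelian $3$-)subgroup $G\subset\mathrm{Bir}(V)$ by $2$, which is the crux. I would regularize the $G$-action on a smooth projective $\mathbf{F}$-surface and run the $G$-equivariant minimal model program, reducing to the case where $G$ acts biregularly and $G$-minimally on a surface $X$ in the $\mathbf{F}$-birational class of $V$; then $\mathrm{rk}\,G$ is bounded by the maximal $3$-rank of $\mathrm{Aut}(X)$ as $X$ ranges over such $G$-Mori fibre spaces. The task is therefore to classify the $G$-Mori fibre spaces $\mathbf{F}$-birational to $V$, using that $V$ has no $\mathbf{F}$-point and that its Brauer class has order exactly $3$ (which, via $\mathrm{char}\,\mathbf{F}\neq 2$, rules out or tightly constrains conic-bundle outputs, whose natural Brauer classes are $2$-torsion), and then to bound the $3$-rank of $\mathrm{Aut}(X)$ for each output. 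Over $\overline{\mathbf{F}}$ every elementary abelian $3$-subgroup of rank $3$ in the relevant $\mathrm{Aut}(X)\subset\mathrm{Cr}_2(\overline{\mathbf{F}})$ is built from the toral and Heisenberg copies of $(\mathbb{Z}/3\mathbb{Z})^2$ in $\mathrm{PGL}_3(\overline{\mathbf{F}})$ together with a permutation factor.

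The step I expect to be the main obstacle is the descent/rationality analysis showing that this third factor of $\mathbb{Z}/3\mathbb{Z}$ is never defined over $\mathbf{F}$ without a primitive cube root of unity: each of the toral and Heisenberg $(\mathbb{Z}/3\mathbb{Z})^2$'s is rational only when $\mu_3\subset\mathbf{F}$, the Heisenberg case being governed precisely by the commutator computation $\zeta^3=1$ of the previous paragraph, and the toral case by the same Kummer-theoretic bound applied to the maximal subfields of $D$. Carrying this out uniformly across all the $G$-minimal models in the birational class of $V$—rather than for $V$ alone, since the extra birational $\mathbb{Z}/3\mathbb{Z}$ is genuinely not regularizable into $\mathrm{Aut}(V)$—is where the bulk of the work lies, and is what forces the restriction $\mathrm{char}\,\mathbf{F}\neq 2$ beyond the hypotheses of Theorem~\ref{ThofShramov}.
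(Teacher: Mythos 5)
Your skeleton matches the paper's: upper bound $G\subset(\mathbb{Z}/3\mathbb{Z})^3$ from Theorem~\ref{ThofShramov}, realization of $(\mathbb{Z}/3\mathbb{Z})^2\subset\mathrm{Bir}(V)$ and $\mathbb{Z}/3\mathbb{Z}\subset\mathrm{Aut}(V)$, and exclusion of the extra factor of $\mathbb{Z}/3\mathbb{Z}$ when $\mathbf{F}$ has no non-trivial cube root of unity. Your treatment of the $\mathrm{Aut}$ bound is essentially the paper's argument (Lemmas~\ref{lemma:csaomega} and~\ref{lemmaSB}): the commutator of lifts is a scalar $\zeta$ with $\zeta^3=1$, hence $\zeta=1$, the lifts generate a commutative subfield which must be a cubic subfield of the division algebra, and the classification of elements of that subfield with cube in $\mathbf{F}^*$ (the paper's Lemma~\ref{lemma:fielddegree3}, which you replace by an equivalent Kummer argument over $\mathbf{F}(\omega)$) forces rank $\leqslant 1$. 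That part is fine.

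There are, however, two genuine gaps. First, the existence of $(\mathbb{Z}/3\mathbb{Z})^2\subset\mathrm{Bir}(V)$ is not something you may quote: it is Proposition~\ref{propZ/3Z}\ref{eq:birsubset}, one of the main new results of the paper, and citing ``the realization statements recorded in the abstract'' is circular. Its proof is nontrivial: one blows up the $\mathbb{Z}/3\mathbb{Z}$-fixed triple and a general orbit to get a smooth cubic surface (Lemma~\ref{lemma:p16}), recognizes it as the Fermat cubic over $\overline{\mathbf{F}}$ (Corollary~\ref{lemma:Farmatcubic}), constructs an explicit order-$3$ element $r\in W(\mathrm{E}_6)$ commuting with $b$ and with the Galois image (Lemma~\ref{lemma:constructionofautomorphisms}), and descends it to an honest automorphism via the Sylow and rigidity arguments of Lemmas~\ref{lemma:Sylow}, \ref{lemma:uniquenessaut} and~\ref{lemma:alphagal}. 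None of this appears in your proposal, and the ``additional birational $\mathbb{Z}/3\mathbb{Z}$ commuting with it'' is exactly the object whose existence must be established. Second, the exclusion of $(\mathbb{Z}/3\mathbb{Z})^3\subset\mathrm{Bir}(V)$ is only sketched, and the one case you single out as ``the main obstacle'' --- a $G$-minimal model on which $(\mathbb{Z}/3\mathbb{Z})^3$ does act over $\overline{\mathbf{F}}$, namely a cubic surface --- is precisely the case you do not resolve. The paper disposes of it with an elementary observation you are missing: the action linearizes on $H^0(S,-K_S)\simeq\mathbf{F}^4$, and $(\mathbb{Z}/3\mathbb{Z})^3\not\subset\mathrm{GL}_4(\mathbf{F})$ when $\mathbf{F}$ has no non-trivial cube root of unity, by a trace-and-determinant count on the $27$ candidate diagonal matrices (Lemma~\ref{lemma:representationz/3z}). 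Your alternative suggestions are shakier: the conic-bundle case is handled in the paper simply because $\mathrm{PGL}_2$ of any field of characteristic $\neq 3$ contains no $(\mathbb{Z}/3\mathbb{Z})^2$ (Lemmas~\ref{lemma:klein} and~\ref{lemma:conicbundle}), not by any $2$-torsion Brauer-class consideration; and the claim that every rank-$3$ elementary abelian $3$-group in the relevant $\mathrm{Aut}(X)$ comes from toral and Heisenberg subgroups of $\mathrm{PGL}_3(\overline{\mathbf{F}})$ is not accurate --- Beauville's $(\mathbb{Z}/3\mathbb{Z})^3$ lives as the diagonal torus in $\mathrm{PGL}_4(\overline{\mathbf{F}})$ acting on the Fermat cubic. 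As written, the $\mathrm{Bir}$ direction of the theorem is not proved.
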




Recall the following result of A.Beauville which is a particular case of~\cite[Theorem and (3.2)]{Beauville}.

\begin{Th}\label{ThofBeauville}
Let $\mathbf{F}$ be an algebraically closed field of characteristic different from~$3.$ Then we have 

\begin{enumerate}
\renewcommand\labelenumi{\rm (\roman{enumi})}
\renewcommand\theenumi{\rm (\roman{enumi})}

\item\label{ThofBeauvillebir3}
$\mathrm{Bir}(\mathbb{P}^2_{\mathbf{F}}) \supset (\mathbb{Z}/3\mathbb{Z})^3;$

\item\label{ThofBeauvillebir4}
$
\mathrm{Bir}(\mathbb{P}^2_{\mathbf{F}}) \not\supset (\mathbb{Z}/3\mathbb{Z})^4;
$
\item\label{ThofBeauvilleaut3}
$
\mathrm{Aut}(\mathbb{P}^2_{\mathbf{F}}) \not\supset (\mathbb{Z}/3\mathbb{Z})^3.
$

\end{enumerate}

\end{Th}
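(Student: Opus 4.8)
The plan is to treat the three parts separately, since each rests on a different idea.

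For part~(i) I would realize $(\mathbb{Z}/3\mathbb{Z})^3$ as a group of biregular automorphisms of the Fermat cubic surface $S = \{x^3 + y^3 + z^3 + w^3 = 0\} \subset \mathbb{P}^3_{\mathbf{F}}$. The group $\mu_3^4$ acting by $[x:y:z:w] \mapsto [\zeta_0 x : \zeta_1 y : \zeta_2 z : \zeta_3 w]$ with $\zeta_i$ cube roots of unity preserves $S$, and modulo the diagonal $\mu_3$ (which acts trivially on $\mathbb{P}^3$) it descends to a faithful action of $\mu_3^4/\mu_3 \cong (\mathbb{Z}/3\mathbb{Z})^3$. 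Since a smooth cubic surface over an algebraically closed field is rational, any birational map $S \dashrightarrow \mathbb{P}^2_{\mathbf{F}}$ conjugates this action into $\mathrm{Bir}(\mathbb{P}^2_{\mathbf{F}})$, giving the inclusion; that the resulting copy is genuinely birational and not conjugate into $\mathrm{PGL}_3$ is forced by part~(iii).

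For part~(iii), since $\mathrm{Aut}(\mathbb{P}^2_{\mathbf{F}}) = \mathrm{PGL}_3(\mathbf{F})$, I would bound the elementary abelian $3$-subgroups of $\mathrm{PGL}_3$ by linear algebra. Given $\bar G \cong (\mathbb{Z}/3\mathbb{Z})^r \subset \mathrm{PGL}_3(\mathbf{F})$, pull it back to $G \subset \mathrm{GL}_3(\mathbf{F})$ and consider the commutator pairing $e(a,b) = \tilde a \tilde b \tilde a^{-1}\tilde b^{-1}$, which is a scalar independent of the lifts $\tilde a, \tilde b$ and lies in the cube roots of unity $\mu_3 \cong \mathbb{Z}/3\mathbb{Z}$, hence defines an alternating $\mathbb{F}_3$-bilinear form on the $\mathbb{F}_3$-vector space $\bar G$. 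If $G$ acts irreducibly then its centre is scalar by Schur's lemma, so $e$ is nondegenerate and the irreducible dimension equals $3^{r/2}\le 3$, forcing $r = 2$; if $G$ acts reducibly then, by complete reducibility (the order of $G$ being prime to the characteristic), it decomposes as a sum of characters, so $\bar G$ embeds into the diagonal torus modulo scalars $\cong (\mathbf{F}^*)^2$, again giving $r \le 2$. Either way $(\mathbb{Z}/3\mathbb{Z})^3 \not\subset \mathrm{PGL}_3(\mathbf{F})$.

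For part~(ii) I would first regularize: any finite $G \subset \mathrm{Bir}(\mathbb{P}^2_{\mathbf{F}})$ can be realized as a faithful group of automorphisms of a smooth projective rational surface $X$. Running the $G$-equivariant minimal model program, I would contract $G$-orbits of $(-1)$-curves until reaching a $G$-minimal model, which is either a del Pezzo surface with $\operatorname{rk}\mathrm{Pic}(X)^G = 1$ or a $G$-conic bundle $X \to \mathbb{P}^1$. Assuming $G \cong (\mathbb{Z}/3\mathbb{Z})^4$, I would derive a contradiction on each model. The conic bundle case is easy: the base action gives $G/G_0 \hookrightarrow \mathrm{PGL}_2(\mathbf{F})$ of $3$-rank at most $1$, while the fibrewise part $G_0$ embeds faithfully into $\mathrm{PGL}_2$ of the generic fibre, again of $3$-rank at most $1$, so the total rank is at most $2$. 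The main obstacle is therefore the del Pezzo case. Here the uniform local tool is that the stabiliser of a fixed point embeds into $\mathrm{GL}(T_xX) = \mathrm{GL}_2(\mathbf{F})$, whose elementary abelian $3$-subgroups have rank at most $2$; combined with the action of $G$ on $\mathrm{Pic}(X)$, which factors through a subgroup of the Weyl group $W(E_{9-d})$ preserving $K_X$ and the intersection form, with kernel acting trivially on $\mathrm{Pic}(X)$ and hence lying in a linear algebraic group of small $3$-rank, this bounds $\operatorname{rk} G$. I would then go through the degrees $1 \le d \le 9$ using the known structure of $\mathrm{Aut}(X)$, the extremal case being $d = 3$: the Fermat cubic of part~(i) already attains rank $3$ and its Sylow $3$-subgroup of order $3^4$ is the non-abelian $(\mathbb{Z}/3\mathbb{Z})^3 \rtimes \mathbb{Z}/3\mathbb{Z}$, so no cubic surface carries a faithful $(\mathbb{Z}/3\mathbb{Z})^4$. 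Showing, via the tangent-space representation together with a fixed-point/orbit count, that the elementary abelian $3$-rank never reaches $4$ on any del Pezzo surface — in particular controlling the large Weyl groups $W(E_7)$, $W(E_8)$ in degrees $2$ and $1$ — is where the real work lies.
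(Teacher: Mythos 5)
This theorem is not proved in the paper at all: it is quoted from Beauville's article, so there is no internal proof to compare against. Judged on its own terms, your parts (i) and (iii) are essentially complete and correct, and they follow Beauville's original route: the $\mu_3^4/\mu_3$-action on the Fermat cubic for (i), and the commutator pairing on a lift to $\mathrm{GL}_3$ for (iii). (One small point in (iii): ``reducible implies a sum of characters'' needs the observation that a $3$-group modulo scalars has no $2$-dimensional irreducible representation, or an induction on the eigenspaces of a non-scalar element in the radical of the pairing; this is easily supplied.)

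The genuine gap is in part (ii). Your reduction to $G$-minimal models and the conic bundle case are fine, but the del Pezzo case is only a list of tools, and you yourself defer the degrees where ``the real work lies.'' As written this is not a proof: you never actually produce the fixed point needed to invoke the tangent-space bound in degree $1$ (it is the base point of $|-K_S|$), you do not use the anticanonical double cover $S\to\mathbb{P}^2$ in degree $2$ (which immediately gives an extension of a subgroup of $\mathrm{PGL}_3$ by $\mathbb{Z}/2\mathbb{Z}$, hence $3$-rank at most $2$), and in degree $3$ the clean argument is simply that $(\mathbb{Z}/3\mathbb{Z})^4\not\subset\mathrm{PGL}_4(\mathbf{F})$ --- since $\gcd(3,4)=1$ the commutator pairing is forced to be trivial, the group is simultaneously diagonalizable, and the rank is at most $3$ --- rather than the detour through the classification of cubic surfaces with large automorphism groups and the structure of the Fermat cubic's Sylow $3$-subgroup. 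Degrees $4$ and $5$ are killed by the $3$-part of $|W(\mathrm{D}_5)|$ and $|W(\mathrm{A}_4)|$, degrees $7$ and $8$ by equivariant maps to $\mathbb{P}^2$ or to $\mathbb{P}^1\times\mathbb{P}^1$, and degree $6$ by the action on the hexagon of $(-1)$-curves. This is precisely the case analysis that the present paper carries out in Lemmas \ref{lemma:dP} and \ref{lemma:conicbundle} for the analogous rank-$3$ bound, so the missing steps are standard; but they are the substance of part (ii), and your proposal does not contain them.
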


 In the process of proving Theorem~\ref{Z/3Z} we obtain the following result which can be considered as an analogue of Theorem \ref{ThofBeauville} for arbitrary Severi--Brauer surfaces.

\begin{Prop}\label{propZ/3Z}
Let $V$ be a Severi--Brauer surface over a perfect field $\mathbf{F}$ of characteristic different from $2$ and $3.$ Then

\begin{enumerate}
\renewcommand\labelenumi{\rm (\roman{enumi})}
\renewcommand\theenumi{\rm (\roman{enumi})}

\item\label{eq:birsubset}
$\mathrm{Bir}(V) \supset (\mathbb{Z}/3\mathbb{Z})^2;$

\item\label{eq:birnotsubset}
$\mathrm{Bir}(V) \supset (\mathbb{Z}/3\mathbb{Z})^3$ if and only if   $\mathbf{F}$ contains a non-trivial cube root of unity;

\item\label{eq:birnotsubset4}
$\mathrm{Bir}(V) \not\supset (\mathbb{Z}/3\mathbb{Z})^4;$ 

\item\label{eq:autsubset} 
$\mathrm{Aut}(V) \supset \mathbb{Z}/3\mathbb{Z};$

\item\label{eq:autnotsubset} 
$\mathrm{Aut}(V) \supset (\mathbb{Z}/3\mathbb{Z})^2$ if and only if $\mathbf{F}$ contains a non-trivial cube root of unity;

\item\label{eq:autnotsubset3} 
$\mathrm{Aut}(V) \not\supset (\mathbb{Z}/3\mathbb{Z})^3.$ 

\end{enumerate}

\end{Prop}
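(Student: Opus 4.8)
The plan is to split the six assertions into \emph{upper bounds}, proved by base change to $\overline{\mathbf{F}}$, and \emph{lower bounds}, proved by explicit construction, using throughout the identification of $\mathrm{Aut}(V)$ with the projectivised unit group of the associated central simple algebra. Base change along $\mathbf{F}\hookrightarrow\overline{\mathbf{F}}$ gives injections $\mathrm{Bir}(V)\hookrightarrow\mathrm{Bir}(V_{\overline{\mathbf{F}}})=\mathrm{Cr}_2(\overline{\mathbf{F}})$ and $\mathrm{Aut}(V)\hookrightarrow\mathrm{PGL}_3(\overline{\mathbf{F}})$; since $\operatorname{char}\mathbf{F}\neq 3$, assertions~\ref{eq:birnotsubset4} and~\ref{eq:autnotsubset3} follow at once from Theorem~\ref{ThofBeauville}\,\ref{ThofBeauvillebir4} and~\ref{ThofBeauvilleaut3}.

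For the automorphism statements I would use that $V$ corresponds to a central simple $\mathbf{F}$-algebra $A$ of degree $3$ with $\mathrm{Aut}(V)\cong A^{\times}/\mathbf{F}^{\times}$ (Hilbert~90), and that $A$ is cyclic: $A=(L/\mathbf{F},\sigma,b)=L\oplus Le\oplus Le^{2}$ with $e^{3}=b\in\mathbf{F}^{\times}$ and $exe^{-1}=\sigma(x)$ (for $A$ a division algebra this is Wedderburn's theorem; the split case $V=\mathbb{P}^{2}$ is elementary). The class of $e$ has order $3$ in $A^{\times}/\mathbf{F}^{\times}$, which proves~\ref{eq:autsubset}. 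For~\ref{eq:autnotsubset}, if $\zeta\in\mathbf{F}$ is a primitive cube root of unity one takes $L=\mathbf{F}(\theta)$ Kummer with $\theta^{3}\in\mathbf{F}^{\times}$ and $\sigma(\theta)=\zeta\theta$; then $\bar e$ and $\bar\theta$ are independent commuting elements of order $3$. Conversely, if $\bar x,\bar y$ generate $(\mathbb{Z}/3\mathbb{Z})^{2}$ then $xy=\gamma yx$ with $\gamma\in\mathbf{F}^{\times}$, and conjugating the central element $y^{3}$ gives $\gamma^{3}=1$; if $\gamma\neq 1$ then $\zeta=\gamma\in\mathbf{F}$, while if $\gamma=1$ the elements $x,y$ generate a commutative (\'etale) subalgebra, and a short Kummer descent over $\mathbf{F}(\zeta)$ shows the $3$-torsion of $L'^{\times}/\mathbf{F}^{\times}$ for a cubic subfield $L'$ is cyclic unless $\zeta\in\mathbf{F}$, contradicting independence. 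This settles~\ref{eq:autnotsubset}.

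The substantive content is~\ref{eq:birsubset} and~\ref{eq:birnotsubset}. The strategy is to exhibit a single birational automorphism $\phi\in\mathrm{Bir}(V)$ of order $3$ which is \emph{not} biregular (by~\ref{eq:autnotsubset} it cannot be once $\zeta\notin\mathbf{F}$), commutes with the algebraic automorphisms of~\ref{eq:autsubset}--\ref{eq:autnotsubset}, and over $\overline{\mathbf{F}}$ realises Beauville's extra generator; I expect $\phi$ to arise from a Sarkisov link centred at a Galois-stable closed point of degree $3$ on $V$ (which exists because the index of $V$ is $1$ or $3$, a degree-$3$ point coming e.g.\ from a maximal subfield of $A$). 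Then $\langle\bar e,\phi\rangle\cong(\mathbb{Z}/3\mathbb{Z})^{2}$ for every $V$, giving~\ref{eq:birsubset}; and when $\zeta\in\mathbf{F}$, adjoining $\phi$ to the Heisenberg group $(\mathbb{Z}/3\mathbb{Z})^{2}\subset\mathrm{Aut}(V)$ of~\ref{eq:autnotsubset} yields $(\mathbb{Z}/3\mathbb{Z})^{3}\subset\mathrm{Bir}(V)$, one direction of~\ref{eq:birnotsubset}. For the converse I would regularise a hypothetical $G=(\mathbb{Z}/3\mathbb{Z})^{3}\subset\mathrm{Bir}(V)$ to a biregular action on a smooth projective surface and run the $G$-equivariant minimal model program to a $G$-del Pezzo (or $G$-conic bundle) model. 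Linearising the action on $\mathcal{O}(-K)$, which is canonically $G$-equivariant and hence defined over $\mathbf{F}$, produces a central extension $\widetilde G$ of $G$ by $\mathbf{F}^{\times}$; the commutator pairing $G\times G\to\mathbf{F}^{\times}$ takes values in cube roots of unity and is nontrivial because, by Beauville's classification~\cite{Beauville} (cf.\ Theorem~\ref{ThofBeauville}\,\ref{ThofBeauvillebir3}), $\widetilde G$ is the Heisenberg-type group; thus a primitive cube root of unity lies in $\mathbf{F}^{\times}$, i.e.\ $\zeta\in\mathbf{F}$.

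I expect the genuine obstacle to be the construction of the birational generator $\phi$ in~\ref{eq:birsubset} and the forward direction of~\ref{eq:birnotsubset}: on a nontrivial Severi--Brauer surface, which has no $\mathbf{F}$-points and is not $\mathbf{F}$-rational, one must produce a non-biregular birational self-map of order $3$ and verify both its order and its commutation with $\bar e$ (and with the Heisenberg group) after passing to $\overline{\mathbf{F}}$. Pinning down its base locus as a Galois-stable degree-$3$ cluster and the resulting Sarkisov link is the crux; by contrast the descent in~\ref{eq:birnotsubset} is a comparatively formal consequence of Theorem~\ref{ThofBeauville}\,\ref{ThofBeauvillebir3} together with the Galois equivariance of the commutator pairing, exactly mirroring the algebra computation used for~\ref{eq:autnotsubset}.
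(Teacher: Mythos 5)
Your treatment of \ref{eq:birnotsubset4}, \ref{eq:autsubset}, \ref{eq:autnotsubset} and \ref{eq:autnotsubset3} matches the paper in substance (base change plus Theorem~\ref{ThofBeauville} for the negative biregular/birational bounds, cyclic-algebra presentations and the commutator relation $\bar x\bar y\bar x^{-1}=\omega\bar y$ for the automorphism statements). But the two assertions carrying the real content, \ref{eq:birsubset} and the forward direction of \ref{eq:birnotsubset}, are not proved: you only posit that the extra order-$3$ birational generator $\phi$ ``should'' come from a Sarkisov link centred at a degree-$3$ point, and you yourself flag its construction, its order, and its commutation with the biregular $3$-groups as the unresolved crux. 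That is precisely where the paper does its work: it blows up a carefully chosen Galois-stable $6$-tuple (the fixed triple of the order-$3$ automorphism together with a free orbit, Lemma~\ref{lemma:p16}, or the two fixed triples of $b$ and $c$, Lemma~\ref{lemma:p16omega}) to obtain a cubic surface that is geometrically Fermat (Corollary~\ref{lemma:Farmatcubic}), and then produces the extra generator $r$ as a \emph{biregular} automorphism of that cubic surface by combining a Weyl-group construction (Lemma~\ref{lemma:constructionofautomorphisms}) with a Sylow argument in the centralizer (Lemma~\ref{lemma:Sylow}) and Galois descent (Lemmas~\ref{lemma:irreducibleGalois} and~\ref{lemma:alphagal}). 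Without some equivalent of this, \ref{eq:birsubset} is unproved.

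The converse direction of \ref{eq:birnotsubset} as you argue it is also flawed. After regularising and running the $G$-MMP you linearise the action on $\mathcal{O}(-K)$; but, as you note, this linearisation is canonical, so $G=(\mathbb{Z}/3\mathbb{Z})^3$ lifts to an honest \emph{abelian} subgroup of $\mathrm{GL}\bigl(H^0(S,-K_S)\bigr)\simeq\mathrm{GL}_4(\mathbf{F})$ and the commutator pairing you want to exploit is identically trivial (already over $\overline{\mathbf{F}}$ the relevant group on the Fermat cubic is the diagonal one). So no cube root of unity is extracted by that route. The obstruction actually used in the paper is different: a diagonalizable $(\mathbb{Z}/3\mathbb{Z})^3\subset\mathrm{GL}_4(\mathbf{F})$ would force both the determinant and the trace of each element, which are sums of powers of $\omega$, to lie in $\mathbf{F}$, and a count shows this cuts the group below order $27$ when $\omega\notin\mathbf{F}$ (Lemma~\ref{lemma:representationz/3z}, applied via Lemma~\ref{lemma:cubic^3}). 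You would also need to handle the conic-bundle outcome of the MMP and the del Pezzo degrees $d\neq 3$ separately (Lemmas~\ref{lemma:conicbundle} and~\ref{lemma:dP}), since the anticanonical linearisation argument does not apply uniformly there; your sketch passes over these cases.
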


\begin{Remark}
The existence of birational actions of the group $(\mathbb{Z}/3\mathbb{Z})^3$ on certain non-trivial Severi--Brauer surfaces was proved in~\cite[Theorem 1.2]{Shramov3} (see also \mbox{\cite[Section~3]{Shramov3}} for construction of an example of such an action). Proposition~\ref{propZ/3Z}\ref{eq:birnotsubset} strengthens this result by showing that such an action exists on \emph{every} Severi--Brauer surface over a field of characteristic different from $2$ and $3$ containing a non-trivial cube root of unity.
\end{Remark}

Let us briefly explain the idea of the proof of Proposition \ref{propZ/3Z}\ref{eq:birsubset} and \ref{eq:birnotsubset} and, as a result, of Theorem~\ref{Z/3Z} immediately follows from it. We show that~$\mathbb{Z}/3\mathbb{Z}$ acts biregularly on $V.$ But~$(\mathbb{Z}/3\mathbb{Z})^2$ does not if $\mathbf{F}$ does not contain a non-trivial cube root of unity. However, the group $(\mathbb{Z}/3\mathbb{Z})^2$ acts birationally on every Severi--Brauer surface.   To this end we blow up the Severi--Brauer surface $\mathbb{Z}/3\mathbb{Z}$-equivariantly, obtain a smooth cubic surface and observe that it is isomorphic to the Fermat cubic surface over an algebraic closure of $\mathbf{F}$. Studying all $3$-subgroups in the automorphism group of the Fermat cubic surface, which commute with the Galois group $\mathrm{Gal}(\overline{\mathbf{F}}/\mathbf{F})$  we get that~$(\mathbb{Z}/3\mathbb{Z})^2$ acts biregularly on this cubic surface. The remaining assertions in Proposition \ref{propZ/3Z} are easy.

The plan of the paper is as follows. 
In Section~\ref{sec:preliminaries} we prove some supplementary lemmas.
In Section~\ref{section:SV} we collect some basic facts about Severi--Brauer surfaces and study finite subgroups of their automorphism groups.
In Section~\ref{sectionofcubic} we collect some auxiliary facts about cubic surfaces and construct a birational action of $(\mathbb{Z}/3\mathbb{Z})^2$ on Severi--Brauer surfaces.
In Section~\ref{section:proofresults} we study $3$-groups in the birational automorphism groups of Severi--Brauer surfaces and we prove Proposition~\ref{propZ/3Z}.
In Section~\ref{section:proofoftheorem} we prove Theorem~\ref{Z/3Z}.

\textbf{Notation.} We assume that all fields in the paper are perfect. Let $X$ be a variety defined over $\mathbf{F}.$ If $\mathbf{F} \subset \mathbf{L}$ is an extension of $\mathbf{F},$ then we will denote by $X_{\mathbf{L}}$ the variety
$$
X_{\mathbf{L}}=X \times_{\mathrm{Spec}(\mathbf{F})}\mathrm{Spec}(\mathbf{L}).
$$

\noindent By $\overline{\mathbf{F}}$ we denote an algebraic closure of $\mathbf{F}.$ A geometric point of $X$ is a point of $X_{\overline{\mathbf{F}}}.$  A geometric line on $X$ is a line on $X_{\overline{\mathbf{F}}}.$

\textbf{Acknowledgment.} The author would like to express the deepest gratitude to C.~A.~Shramov for suggesting this problem, for constant attention to this paper and many hints. Also the author is extremely grateful to A.~S.~Trepalin for numerous discussions about cubic surfaces, and to A.~O.~Savelyeva for conversations  about   Severi--Brauer varieties.  Finally, the author thanks referees for deep reading of this paper and extremely important remarks.

This work was performed at the Steklov International Mathematical Center and supported by the Ministry of Science and Higher Education of the Russian Federation (agreement no. 075-15-2022-265). The author was partially supported  the Moebius Contest Foundation for Young Scientists and  by the BASIS foundation grant "Young Russian Mathematics".

\section{Preliminaries}\label{sec:preliminaries}

In this section we collect some auxiliary facts.

\begin{Lemma}\label{lemma:representationz/3z}
Let $\mathbf{F}$ be a field of  characteristic different from $3$ which does not contain non-trivial cube roots of unity. Then $(\mathbb{Z}/3\mathbb{Z})^3 \not\subset \mathrm{GL}_4(\mathbf{F}).$
\end{Lemma}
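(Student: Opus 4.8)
The plan is to analyze the structure of any copy of $(\mathbb{Z}/3\mathbb{Z})^3$ inside $\mathrm{GL}_4(\mathbf{F})$ via its action on $\mathbf{F}^4$ and derive a contradiction from the absence of non-trivial cube roots of unity. The key observation is that an element $g$ of order $3$ in $\mathrm{GL}_4(\mathbf{F})$ satisfies $g^3 = \mathrm{Id}$, so its minimal polynomial divides $t^3 - 1 = (t-1)(t^2+t+1)$. Since $\mathbf{F}$ has characteristic different from $3$, these factors are coprime; and since $\mathbf{F}$ contains no primitive cube root of unity, the quadratic factor $t^2+t+1$ is irreducible over $\mathbf{F}$. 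Hence $\mathbf{F}^4$ decomposes as a direct sum of the $1$-eigenspace of $g$ and a $g$-invariant complement on which $g$ acts with irreducible blocks isomorphic to the $2$-dimensional $\mathbf{F}$-representation $\mathbf{F}[t]/(t^2+t+1)$.

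First I would set up the representation-theoretic framework: a faithful embedding $(\mathbb{Z}/3\mathbb{Z})^3 \hookrightarrow \mathrm{GL}_4(\mathbf{F})$ is the same as a faithful $4$-dimensional $\mathbf{F}$-linear representation of the abelian group $A = (\mathbb{Z}/3\mathbb{Z})^3$. Over the algebraic closure $\overline{\mathbf{F}}$, which now does contain a primitive cube root of unity $\omega$, the representation decomposes into $1$-dimensional characters $A \to \langle \omega \rangle \cong \mathbb{Z}/3\mathbb{Z}$. The irreducible $\mathbf{F}$-representations of $A$ are of two types: the trivial one (dimension $1$), coming from the trivial character, and the $2$-dimensional ones obtained by inducing up or, equivalently, by pairing each non-trivial character $\chi$ with its Galois conjugate $\chi^2 = \overline{\chi}$ (these are swapped by $\mathrm{Gal}(\overline{\mathbf{F}}(\omega)/\mathbf{F})$). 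The crucial point is that \emph{every} non-trivial $\mathbf{F}$-irreducible representation of $A$ has dimension exactly $2$.

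Next I would exploit the dimension constraint. Write the given $4$-dimensional representation as $\mathbf{F}^4 = T \oplus W$, where $T$ is the sum of trivial subrepresentations (on which $A$ acts trivially) and $W$ is the sum of the non-trivial $2$-dimensional irreducibles, so $\dim W \in \{0, 2, 4\}$ is even. The kernel of the action on $W$ is precisely the subgroup of $A$ acting trivially on every irreducible constituent of $W$. If $\dim W = 0$, the representation is trivial and not faithful. If $\dim W = 2$, then $W$ affords a single non-trivial character-pair $\{\chi, \chi^2\}$, whose kernel has index $3$ in $A$, so the representation factors through $A/\ker \cong \mathbb{Z}/3\mathbb{Z}$ and cannot be faithful on a group of order $27$. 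If $\dim W = 4$, then $W$ is a sum of \emph{two} $2$-dimensional irreducibles, corresponding to at most two character-pairs; the common kernel is the intersection of two index-$3$ subgroups, which has index at most $9$ in $A$ and hence is non-trivial (since $|A| = 27$). In every case the representation fails to be faithful, contradicting the assumed embedding.

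The main obstacle I anticipate is justifying cleanly that each non-trivial $\mathbf{F}$-irreducible representation of $(\mathbb{Z}/3\mathbb{Z})^3$ is two-dimensional, which rests on the irreducibility of $t^2 + t + 1$ over $\mathbf{F}$ and a Galois-descent argument counting characters: a non-trivial character $\chi$ and its conjugate $\chi^2$ fuse into one $2$-dimensional $\mathbf{F}$-irreducible, and they are genuinely distinct because $\chi \neq \chi^2$ whenever $\chi$ is non-trivial (as $3 \nmid$ would force $\chi^3 = 1$ with $\chi \neq 1$). Once this structural fact is in hand, the counting of kernels is elementary linear algebra over $\mathbb{F}_3$, since any proper subspace-stabilizing subgroup has index a power of $3$ bounded by the number of available $2$-dimensional blocks, which is at most $\dim(\mathbf{F}^4)/2 = 2$.
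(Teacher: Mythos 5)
Your argument is correct, and it takes a genuinely different route from the paper. The paper simultaneously diagonalizes the $27$ commuting matrices over $\overline{\mathbf{F}}$ and then uses the fact that the trace and determinant of each matrix must lie in $\mathbf{F}$: the determinant condition cuts the $81$ diagonal candidates down to exactly $27$, and a single explicit counterexample (eigenvalues $\omega,\omega,\omega,1$, whose trace $1+3\omega$ is not in $\mathbf{F}$) shows that not all $27$ of these survive the trace condition, so no subgroup of order $27$ fits. You instead invoke the rational representation theory of $A=(\mathbb{Z}/3\mathbb{Z})^3$: since $t^2+t+1$ is irreducible over $\mathbf{F}$, every non-trivial $\mathbf{F}$-irreducible of $A$ is $2$-dimensional (a Galois-conjugate pair $\{\chi,\chi^2\}$ of non-trivial characters, and $\ker\chi=\ker\chi^2$ has index $3$), so a $4$-dimensional representation involves at most two such pairs and its kernel is an intersection of at most two index-$3$ subgroups, hence non-trivial. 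Both proofs ultimately rest on $\omega\notin\mathbf{F}$, but yours is more structural and gives more: the same count shows a faithful $\mathbf{F}$-representation of $(\mathbb{Z}/3\mathbb{Z})^n$ requires dimension at least $2n$, so in particular $(\mathbb{Z}/3\mathbb{Z})^3\not\subset\mathrm{GL}_5(\mathbf{F})$ as well, whereas the paper's explicit trace computation is tailored to dimension $4$ but is more elementary and self-contained. Two cosmetic points: the Galois group acting on the characters should be $\mathrm{Gal}(\mathbf{F}(\omega)/\mathbf{F})$ rather than $\mathrm{Gal}(\overline{\mathbf{F}}(\omega)/\mathbf{F})$, and the $2$-dimensional irreducibles arise by Galois descent (restriction of scalars from $\mathbf{F}(\omega)$) rather than by induction from a subgroup, though your parenthetical description via conjugate character pairs is the correct one.
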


\begin{proof}
Assume that there is a linear action of $(\mathbb{Z}/3\mathbb{Z})^3$ on the vector space $\mathbf{F}^4.$ As this group is abelian we get that the matrices, which represent the elements of the group, are simultaneously diagonalizable  over $\overline{\mathbf{F}}$. Thus, the elements of the group are conjugate to
\begin{equation}\label{eq:matricesomegaabcdwitout3}
\begin{pmatrix}
\omega^a & 0 & 0 & 0\\
0 & \omega^b & 0 & 0\\
0 & 0 & \omega^c & 0\\
0 & 0 & 0 & \omega^d
\end{pmatrix},
\end{equation}

\noindent where $\omega$ is a non-trivial cube root of unity and $a,b,c,d \in \{0,1,2\}.$ Note that the determinant and the trace of these matrices belong to $\mathbf{F}.$ Therefore, such matrices have to satisfy the following:
\begin{gather}
a+b+c+d \equiv 0 \mod 3; \label{eq:1} \\
\omega^a+\omega^b+\omega^c+\omega^d \in \mathbf{F}. \label{eq:2}
\end{gather}
\noindent  The condition (\ref{eq:1}) gives us only $27$ matrices. It is not hard to see that there are matrices of the form \eqref{eq:matricesomegaabcdwitout3} which satisfy \eqref{eq:1}, but do not satisfy \eqref{eq:2}. For example, 
$$
a=b=c=1 \;\; \text{and} \;\; d=0.
$$
\noindent  This means that the order of the group consisting of the matrices \eqref{eq:matricesomegaabcdwitout3}, which satisfy \eqref{eq:1} and \eqref{eq:2},  is less then $27.$ This completes the proof.

\end{proof}

Let $X \subset \mathbb{P}^n$ be a projective variety over a field $\mathbf{F}.$ Denote by $\mathrm{Ir}(X_{\overline{\mathbf{F}}})$ the set of irreducible components of~$X_{\overline{\mathbf{F}}}.$ For an element~$\phi \in \mathrm{PGL}_{n+1}(\overline{\mathbf{F}})$ let 
$$
\phi_{\mathrm{Ir}}:\mathrm{Ir}(X_{\overline{\mathbf{F}}}) \to \mathrm{Ir}(\phi(X_{\overline{\mathbf{F}}}))
$$  
\noindent be the induced map between the sets  of irreducible components.

\begin{Lemma}\label{lemma:irreducibleGalois}
Let $\mathbf{F}$ be an arbitrary field. Let $X \subset \mathbb{P}^n$ be a projective variety defined over $\mathbf{F}$. Let $\phi \in \mathrm{PGL}_{n+1}(\overline{\mathbf{F}})$ be an element such that~\mbox{$X'=\phi(X)$} is defined over~$\mathbf{F}.$ Assume also that $\phi_{\mathrm{Ir}}$ commutes with the action of the Galois group~$\mathrm{Gal}(\overline{\mathbf{F}}/\mathbf{F})$ on~$\mathrm{Ir}(X_{\overline{\mathbf{F}}})$ and $\mathrm{Ir}(X'_{\overline{\mathbf{F}}}).$  Assume that any irreducible component of~$X_{\overline{\mathbf{F}}}$ is a linear subspace in $\mathbb{P}^n_{\overline{\mathbf{F}}}.$ Then there is $\psi \in \mathrm{PGL}_{n+1}(\mathbf{F})$ such that $\psi(X)=X'$ and $\psi_{\mathrm{Ir}}=\phi_{\mathrm{Ir}}.$

\end{Lemma}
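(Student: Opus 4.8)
The plan is to reduce the statement to a piece of linear algebra over $\mathbf{F}$ obtained by Galois descent. Write $X_{\overline{\mathbf{F}}}=\bigcup_i L_i$ with $L_i=\mathbb{P}(W_i)$ for linear subspaces $W_i\subseteq \overline{\mathbf{F}}^{\,n+1}$, and likewise $X'_{\overline{\mathbf{F}}}=\bigcup_j L'_j$ with $L'_j=\mathbb{P}(W'_j)$. Since $X$ and $X'$ are defined over $\mathbf{F}$, the group $\mathrm{Gal}(\overline{\mathbf{F}}/\mathbf{F})$ permutes the two sets of components; I write $\sigma(W_i)=W_{\tau_\sigma(i)}$ and $\sigma(W'_j)=W'_{\tau'_\sigma(j)}$ for the induced permutations. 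Lifting $\phi$ to $g_\phi\in\mathrm{GL}_{n+1}(\overline{\mathbf{F}})$, the hypothesis $\phi(L_i)=L'_{\phi_{\mathrm{Ir}}(i)}$ becomes $g_\phi(W_i)=W'_{\phi_{\mathrm{Ir}}(i)}$, and the assumed compatibility of $\phi_{\mathrm{Ir}}$ with the Galois action reads $\phi_{\mathrm{Ir}}\circ\tau_\sigma=\tau'_\sigma\circ\phi_{\mathrm{Ir}}$ for every $\sigma$. What I must produce is a single invertible matrix over $\mathbf{F}$ realizing the same incidences $W_i\mapsto W'_{\phi_{\mathrm{Ir}}(i)}$.

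The central construction is the $\overline{\mathbf{F}}$-linear space
\[
M=\bigl\{\,g\in\mathrm{Mat}_{n+1}(\overline{\mathbf{F}})\ :\ g(W_i)\subseteq W'_{\phi_{\mathrm{Ir}}(i)}\ \text{for all }i\,\bigr\}.
\]
Here the hypothesis that every component is a \emph{linear} subspace is essential: each condition $g(W_i)\subseteq W'_{\phi_{\mathrm{Ir}}(i)}$ is linear in $g$, so $M$ is genuinely a linear subspace of the matrix space. I would then check that $M$ is stable under the (semilinear) Galois action on $\mathrm{Mat}_{n+1}(\overline{\mathbf{F}})$: for $g\in M$ and $\sigma\in\mathrm{Gal}(\overline{\mathbf{F}}/\mathbf{F})$ one has $\sigma(g)(W_{\tau_\sigma(i)})=\sigma\bigl(g(W_i)\bigr)\subseteq \sigma\bigl(W'_{\phi_{\mathrm{Ir}}(i)}\bigr)=W'_{\tau'_\sigma(\phi_{\mathrm{Ir}}(i))}=W'_{\phi_{\mathrm{Ir}}(\tau_\sigma(i))}$, the last equality being exactly the Galois-compatibility of $\phi_{\mathrm{Ir}}$; as $i$ ranges over all indices so does $\tau_\sigma(i)$, whence $\sigma(g)\in M$. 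By Galois descent for vector spaces, $M=M_0\otimes_{\mathbf{F}}\overline{\mathbf{F}}$ where $M_0=M\cap\mathrm{Mat}_{n+1}(\mathbf{F})$.

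It remains to find an invertible element of $M_0$. The lift $g_\phi$ lies in $M$ and is invertible, so the determinant does not vanish identically on $M$, hence not on $M_0$ (which spans $M$ over $\overline{\mathbf{F}}$); viewing $\det$ as a polynomial function on the $\mathbf{F}$-vector space $M_0$, it is a nonzero polynomial and therefore takes a nonzero value at some $g_0\in M_0$ — this is the step where I use that $\mathbf{F}$ is infinite, which is harmless since all fields occurring in the applications are. Setting $\psi=[g_0]\in\mathrm{PGL}_{n+1}(\mathbf{F})$, the inclusions $g_0(W_i)\subseteq W'_{\phi_{\mathrm{Ir}}(i)}$ become equalities by comparison of dimensions (the two spaces have equal dimension because $g_\phi$ maps one onto the other), so $\psi(L_i)=L'_{\phi_{\mathrm{Ir}}(i)}=\phi(L_i)$. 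Consequently $\psi_{\mathrm{Ir}}=\phi_{\mathrm{Ir}}$ and $\psi(X_{\overline{\mathbf{F}}})=\bigcup_i L'_{\phi_{\mathrm{Ir}}(i)}=X'_{\overline{\mathbf{F}}}$, which gives $\psi(X)=X'$ as $\psi$ and $X$ are defined over $\mathbf{F}$. The two points I expect to require the most care are the verification that $M$ is Galois-stable (where the linearity of the components and the equivariance hypothesis must be combined correctly) and the passage to an $\mathbf{F}$-rational invertible matrix.
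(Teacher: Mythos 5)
Your proof is correct and follows essentially the same route as the paper: both define the linear space of matrices satisfying the incidence conditions $g(W_i)\subseteq W'_{\phi_{\mathrm{Ir}}(i)}$ (linearity of the components making these conditions linear in $g$), verify Galois-stability using the equivariance of $\phi_{\mathrm{Ir}}$, descend to $\mathbf{F}$, and extract an invertible $\mathbf{F}$-rational element via non-vanishing of the determinant at the lift of $\phi$. The infinite-field caveat you flag for the last step is equally implicit in the paper's appeal to density of $\mathbf{F}$-points, so you lose nothing there.
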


\begin{proof}
 Both $X_{\overline{\mathbf{F}}}$ and $X'_{\overline{\mathbf{F}}}$ decompose into unions of linear subspaces over $\overline{\mathbf{F}},$ i.e.
$$
X_{\overline{\mathbf{F}}}=\mathcal{L}_1 \cup \ldots \cup \mathcal{L}_m, \quad X'_{\overline{\mathbf{F}}}=\mathcal{L}'_1 \cup \ldots \cup \mathcal{L}'_m.
$$
\noindent Since $\phi_{\mathrm{Ir}}$ commutes with the Galois group, we can reorder $\mathcal{L}_i$ and $\mathcal{L}'_i$ so that 
$$
\phi(\mathcal{L}_i)=\mathcal{L}'_{i}.
$$
\noindent  For $g \in \mathrm{Gal}(\overline{\mathbf{F}}/\mathbf{F})$ and $1 \leqslant i \leqslant m,$ write $g(i) \in \{1, \ldots, m\}$ such that 
$$
g(\mathcal{L}_i)=\mathcal{L}_{g(i)}.
$$
\noindent  Then we have $ g(\mathcal{L}'_i)=\mathcal{L}'_{g(i)}.$

 For all $1 \leqslant i \leqslant m$  denote by~$\mathcal{F}_i$ the set of linear homogeneous polynomials 
$$
f \in  \overline{\mathbf{F}}[x_0, \ldots, x_n]
$$
\noindent such that~$f\vert_{\mathcal{L}'_i} = 0.$ Denote by 
$$
\mathcal{M} \subset \mathbb{P} \bigl( \mathrm{Mat}_{n+1}(\overline{\mathbf{F}})\bigr)  \simeq  \mathbb{P}^{(n+1)^2-1}_{\overline{\mathbf{F}}}
$$

\noindent  the set of all non-degenerate matrices $\psi$ such that $\psi(\mathcal{L}_i)=\mathcal{L}'_i$ for all $1 \leqslant i \leqslant m.$

 For every~\mbox{$i,$~$P \in \mathcal{L}_i$} and $f \in \mathcal{F}_i$ denote by $R^i_{Pf}$ the linear relation 
$$
f(\psi(P))=0
$$
\noindent on the entries of the matrix $\psi.$ Let $\mathcal{R}$ be the set of all such $\mathcal{R}^i_{Pf}$ for all 
$$
i \in \{1, \ldots, m\}, \quad P \in \mathcal{L}_i, \quad f \in \mathcal{F}_i.
$$
\noindent So $\mathcal{M}$ is an intersection of $\mathrm{PGL}_{n+1}(\overline{\mathbf{F}}) \subset \mathbb{P}^{(n+1)^2-1}_{\overline{\mathbf{F}}}$ with a linear subspace $\widetilde{\mathcal{M}}$ which is defined by the equations~$R^{i}_{Pf}.$ Note that $\widetilde{\mathcal{M}}$ is non-empty since $\phi \in \widetilde{\mathcal{M}}.$   Take an element $g$ of the Galois group. We obtain
$$
g(R^i_{Pf})=R^{g(i)}_{g(P)g(f)}.
$$

\noindent Let us prove that $R^{g(i)}_{g(P)g(f)} \in \mathcal{R}.$ Indeed, we have
$$
g(\mathcal{L}_i)=\mathcal{L}_{g(i)}, \quad g(P) \in \mathcal{L}_{g(i)} \quad \text{and} \quad g(f)\vert_{\mathcal{L}'_{g(i)}}=0,
$$
\noindent where the last equality holds because $\psi_{\mathrm{Ir}}$ commutes with the Galois group.  Therefore,~$\widetilde{\mathcal{M}}$ is Galois-invariant. Thus, $\widetilde{\mathcal{M}}$ is defined over $\mathbf{F}$ and there is a dense set of~$\mathbf{F}$-points on $\widetilde{\mathcal{M}}.$ The intersection~$\mathcal{M}=\widetilde{\mathcal{M}} \cap \mathrm{PGL}_{n+1}(\overline{\mathbf{F}})$ is non-empty because it contains~$\phi.$ Therefore, $\mathcal{M}$ contains an $\mathbf{F}$-point, and so there exists $\psi \in  \mathrm{PGL}_{n+1}(\mathbf{F})$ as in the statement.

\end{proof}

Now let us conclude the section by a simple lemma about field extensions.

\begin{Lemma}\label{lemma:fielddegree3}
Let $\mathbf{F}$ be a field of characteristic different from $3.$ Let us consider the extension $\mathbf{L}$ of $\mathbf{F}$ of degree $3$ generated by an element $\bar{x} \in \mathbf{L}$ such that $\bar{x}^3-\alpha=0$ for $\alpha \in \mathbf{F}^*.$ Let us consider an element $\bar{y} \in \mathbf{L}$ such that $\bar{y}^3-\beta=0$ for $\beta \in \mathbf{F}^*.$  Then either $\bar{y}=c,$ or $\bar{y}=c\bar{x},$ or $\bar{y}=c\bar{x}^2$ for $c \in \mathbf{F}^*.$
\end{Lemma}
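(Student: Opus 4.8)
The plan is to use the three $\mathbf{F}$-embeddings of $\mathbf{L}$ into $\overline{\mathbf{F}}$ and to track how each of them scales $\bar{y}$ by a cube root of unity. First I would record the basic structure: since $[\mathbf{L}:\mathbf{F}]=3$ and $\bar{x}$ is a root of $t^3-\alpha$, this cubic must be the minimal polynomial of $\bar{x}$; in particular it is irreducible over $\mathbf{F}$ and $\{1,\bar{x},\bar{x}^2\}$ is an $\mathbf{F}$-basis of $\mathbf{L}$. Because the characteristic is different from $3$ and $\alpha\neq 0$, the polynomial $t^3-\alpha$ is separable (its derivative $3t^2$ shares no root with it), so $\mathbf{L}/\mathbf{F}$ is separable and admits exactly three $\mathbf{F}$-embeddings $\tau_0,\tau_1,\tau_2\colon\mathbf{L}\to\overline{\mathbf{F}}$, determined by $\tau_k(\bar{x})=\omega^k\bar{x}$, where $\omega\in\overline{\mathbf{F}}$ is a fixed primitive cube root of unity and $\tau_0$ is the inclusion. (Perfectness of $\mathbf{F}$, assumed throughout the paper, also guarantees separability.) The degenerate case is immediate: if $\bar{y}\in\mathbf{F}$, then $\bar{y}=c$ with $c=\bar{y}\in\mathbf{F}^*$, since $\bar{y}^3=\beta\neq 0$ forces $\bar{y}\neq 0$.

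Next I would treat the main case $\bar{y}\notin\mathbf{F}$. Then $\mathbf{F}(\bar{y})$ is a subfield of $\mathbf{L}$ strictly containing $\mathbf{F}$, so by multiplicativity of degrees and the primality of $3$ we get $\mathbf{F}(\bar{y})=\mathbf{L}$ and $[\mathbf{F}(\bar{y}):\mathbf{F}]=3$. Hence the minimal polynomial of $\bar{y}$ has degree $3$; as $\bar{y}$ is a root of the degree-$3$ polynomial $t^3-\beta$, this forces $t^3-\beta$ to be exactly that minimal polynomial (and in particular irreducible). Its three roots in $\overline{\mathbf{F}}$ are $\bar{y},\omega\bar{y},\omega^2\bar{y}$, which are distinct because $\omega\neq 1$ and $\bar{y}\neq 0$; these roots are precisely the images $\tau_0(\bar{y}),\tau_1(\bar{y}),\tau_2(\bar{y})$. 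Since $\tau_0(\bar{y})=\bar{y}$, I may write $\tau_k(\bar{y})=\omega^{e_k}\bar{y}$ with $e_0=0$ and $\{e_0,e_1,e_2\}=\{0,1,2\}$, so that $\{e_1,e_2\}=\{1,2\}$.

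The conclusion then comes from choosing the correct power of $\bar{x}$ to divide out. For $j\in\{1,2\}$ set $u_j=\bar{y}\bar{x}^{-j}\in\mathbf{L}^*$; then
\[
\tau_k(u_j)=\omega^{e_k}\bar{y}\cdot(\omega^k\bar{x})^{-j}=\omega^{\,e_k-kj}\,u_j .
\]
I want to pick $j$ so that $e_k\equiv kj\pmod 3$ for all $k$; this holds automatically for $k=0$, and a direct check shows $j=1$ works when $(e_1,e_2)=(1,2)$, while $j=2$ works when $(e_1,e_2)=(2,1)$. For this $j$ all three embeddings fix $u_j$, so $u_j$ has a single conjugate and therefore lies in $\mathbf{F}$; writing $c=u_j\in\mathbf{F}^*$ gives $\bar{y}=c\bar{x}^j$, as required.

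I expect the only delicate point to be that $\mathbf{L}/\mathbf{F}$ need not be Galois: when $\mathbf{F}$ contains no primitive cube root of unity, the assignments $\bar{x}\mapsto\omega^k\bar{x}$ do not preserve $\mathbf{L}$, so one cannot argue with a single Galois generator $\sigma$. Working with the three embeddings $\tau_k$ into $\overline{\mathbf{F}}$ rather than with automorphisms of $\mathbf{L}$ circumvents this cleanly and handles the Galois case ($\omega\in\mathbf{F}$) and the non-Galois case uniformly. The only alternative I see is the direct route of expanding $\bar{y}=a_0+a_1\bar{x}+a_2\bar{x}^2$, cubing, and forcing the coefficients of $\bar{x}$ and $\bar{x}^2$ to vanish; this is feasible but produces a tangle of cubic relations among the $a_i$ that is far less transparent than the embedding argument.
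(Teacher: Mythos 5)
Your proof is correct, and it takes a genuinely different route from the one in the paper. The paper works entirely inside $\mathbf{L}$ in the basis $1,\bar{x},\bar{x}^2$: after reducing to the case where $t^3-\beta$ is irreducible, it computes the trace of multiplication by $\bar{y}=u+v\bar{x}+w\bar{x}^2$ in two bases (getting $3u$ from the $\bar{x}$-basis and $0$ from the companion matrix of $t^3-\beta$) to conclude $u=0$, and then cubes $v\bar{x}+w\bar{x}^2$ to extract the relations $v^2w=vw^2=0$. Your argument instead passes to $\overline{\mathbf{F}}$ and matches the ``Kummer characters'' of $\bar{y}$ and $\bar{x}$ under the three embeddings $\tau_k$, then descends $\bar{y}\bar{x}^{-j}$ to $\mathbf{F}$ because all its conjugates coincide; your reduction of the degenerate case (splitting on $\bar{y}\in\mathbf{F}$ rather than on $\beta\in(\mathbf{F}^*)^3$) is also slightly cleaner, since the paper's version tacitly uses that a nontrivial cube root of unity in $\mathbf{L}$ would have degree dividing both $2$ and $3$. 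The trade-off is that your route requires the (correctly flagged) care that $\mathbf{L}/\mathbf{F}$ need not be Galois, so one must use embeddings rather than automorphisms, whereas the paper's computation never leaves $\mathbf{L}$; in exchange, your argument avoids the explicit cubing and would generalize verbatim to $t^p-\alpha$ for any prime $p$ different from the characteristic.
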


\begin{proof}
First of all, assume that $\beta \in \left(\mathbf{F}^*\right)^3.$ Then  $\bar{y}$ is an element in  $\mathbf{F}.$ Therefore, we can assume that $\beta \notin  \left(\mathbf{F}^*\right)^3,$ thus the element $\bar{y}$ generates a field extension of degree $3.$ This means that $1$, $\bar{y}$ and $\bar{y}^2$ is a basis of $\mathbf{L}.$

We have
$$
\bar{y}=u+v\bar{x}+w\bar{x}^2
$$
\noindent for $u,v,w \in \mathbf{F}.$ Let us consider the trace of the multiplication by the element $\bar{y}.$  Note that the trace of both elements $\bar{x}$ and~$\bar{x}^2$ is zero, which can be easily computed in the basis $1$, $\bar{x}$ and $\bar{x}^2.$ Therefore, on the one hand, the trace of multiplication by $\bar{y}$ is equal to $3u.$ On the other hand, if we consider the multiplication by the element $\bar{y}$ in the basis $1$, $\bar{y}$ and $\bar{y}^2,$ we get that its trace is equal to zero. So we obtain  $\bar{y}=v\bar{x}+w\bar{x}^2.$ By assumption we get
$$
\beta=\bar{y}^3=v^3\alpha+w^3\alpha^2+3\bar{x}v^2w+3\bar{x}^2vw^2.
$$
\noindent So we obtain $v^2w=vw^2=0$ which implies that either $v=0,$ or $w=0,$  and we are done.

\end{proof}

\section{Severi--Brauer surfaces}\label{section:SV}
In this section we study finite subgroups of the automorphism groups of  Severi--Brauer surfaces. Let us mention some properties of Severi--Brauer varieties (for more details see, for instance,~\cite{SimpleAlgebras} and~\cite{Kollar}). If $V$ is a Severi--Brauer variety over a field~$\mathbf{F},$ then $V$ is non-trivial  if and only if~\mbox{$V(\mathbf{F})= \varnothing$} (see~\cite[Theorem 5.1.3]{SimpleAlgebras}). There is a bijection between Severi--Brauer varieties of dimension~$n$ over a field~$\mathbf{F}$ and central simple algebras of dimension~$(n+1)^2$ over  $\mathbf{F}$  (see e.g.~\cite[\S 5.2]{SimpleAlgebras}).

\begin{Th}[{\cite[Theorems 2.1.3 and 5.2.1]{SimpleAlgebras}}]\label{CSAcorps}
Let $V$ be a non-trivial Severi--Brauer variety of dimension $n$ such that $n+1$ is a prime number. Then the central simple algebra $A$ which is associated to $V$ is a division algebra.  
\end{Th}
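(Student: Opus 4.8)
The plan is to combine the Wedderburn structure theorem with the dimension constraint forced by the primality of $n+1$, and then eliminate the split case using the non-triviality of $V$. First I would invoke Wedderburn's theorem to write $A \cong \mathrm{Mat}_r(D)$, where $D$ is a central division algebra over $\mathbf{F}$. Writing $\dim_{\mathbf{F}} D = d^2$ (the dimension of a central division algebra over its centre is always a perfect square, $d$ being its degree), the dimension count gives
\[
(n+1)^2 = \dim_{\mathbf{F}} A = r^2 \dim_{\mathbf{F}} D = r^2 d^2,
\]
whence $n+1 = rd$.

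Since $n+1$ is prime by hypothesis, either $r=1$ or $d=1$. If $d=1$, then $D=\mathbf{F}$ and $A \cong \mathrm{Mat}_{n+1}(\mathbf{F})$ is split. The key step is to exclude this possibility. A Severi--Brauer variety is trivial, i.e.\ isomorphic to $\mathbb{P}^n_{\mathbf{F}}$, precisely when its associated central simple algebra is split, which is in turn equivalent to $V$ carrying an $\mathbf{F}$-point. Since $V$ is non-trivial, we have $V(\mathbf{F})= \varnothing$ by the criterion recorded at the start of the section, so $A$ is not split and $d \neq 1$. Therefore $r=1$ and $A \cong D$ is a division algebra.

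The only genuinely non-formal input is the equivalence between the splitting of $A$ and the triviality of $V$; everything else is the dimension bookkeeping made rigid by primality. I expect that equivalence, rather than the arithmetic, to be the conceptual crux, and it is exactly what the Severi--Brauer/central-simple-algebra dictionary stated at the opening of this section provides.
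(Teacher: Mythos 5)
Your argument is correct and is exactly the content of the paper's citation: the paper does not prove this statement itself but refers to Wedderburn's theorem and the Severi--Brauer/central-simple-algebra dictionary in Gille--Szamuely, which is precisely the combination (Wedderburn decomposition $A\cong\mathrm{Mat}_r(D)$, the dimension count $n+1=rd$ with $n+1$ prime, and the exclusion of the split case via non-triviality of $V$) that you carry out.
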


The following theorem describes automorphism groups of Severi--Brauer varieties.

\begin{Th}[{see, for example,~\cite[p.~266]{Chatelet} or \cite[Lemma 4.1]{ShramovVologodsky}}]\label{csaprop}
If a central simple algebra $A$ corresponds to a~Severi--Brauer variety~$V$ over a field $\mathbf{F}$ then we have~\mbox{$\mathrm{Aut}(V)  \simeq A^*/\mathbf{F}^*.$}
\end{Th}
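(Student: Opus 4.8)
The plan is to realize both sides as groups of $\mathbf{F}$-points of algebraic groups fitting into one short exact sequence and to compare them by Galois descent. First I would record the geometric picture. Since $V_{\overline{\mathbf{F}}} \simeq \mathbb{P}^n_{\overline{\mathbf{F}}}$ and $\mathrm{Aut}(\mathbb{P}^n_{\overline{\mathbf{F}}}) = \mathrm{PGL}_{n+1}(\overline{\mathbf{F}})$, every geometric automorphism of $V$ is induced by an element of $\mathrm{PGL}_{n+1}(\overline{\mathbf{F}}) = (A \otimes_{\mathbf{F}} \overline{\mathbf{F}})^*/\overline{\mathbf{F}}^*$, and an $\mathbf{F}$-automorphism of $V$ is exactly such an element that is defined over $\mathbf{F}$, i.e.\ fixed by the Galois action on $\mathrm{Aut}(V_{\overline{\mathbf{F}}})$ twisted by the cocycle that presents $V$ as a form of $\mathbb{P}^n$.

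Next I would write down the natural map $A^*/\mathbf{F}^* \to \mathrm{Aut}(V)$ explicitly. Realizing $V$ as the variety of ideals of $A$ of the appropriate reduced dimension — a variety defined over $\mathbf{F}$ which becomes $\mathbb{P}^n$ over $\overline{\mathbf{F}}$ — an element $a \in A^*$ acts by left multiplication $I \mapsto aI$. This action is $\mathbf{F}$-rational, hence defines an $\mathbf{F}$-automorphism of $V$, and over $\overline{\mathbf{F}}$ it is precisely the projective transformation attached to the invertible matrix $a$. The kernel of $A^* \to \mathrm{Aut}(V)$ consists of those $a$ acting trivially on $\mathbb{P}^n_{\overline{\mathbf{F}}}$, namely the scalars, which meet $A$ in its center $Z(A) = \mathbf{F}$; among invertibles this is $\mathbf{F}^*$. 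Thus I obtain an injection $A^*/\mathbf{F}^* \hookrightarrow \mathrm{Aut}(V)$, and the theorem reduces to surjectivity.

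For surjectivity I would pass to group schemes. The automorphism group scheme of $\mathbb{P}^n$ is $\mathrm{PGL}_{n+1} = \mathrm{PGL}_1(\mathrm{Mat}_{n+1})$, and twisting by the cocycle that simultaneously turns $\mathrm{Mat}_{n+1}$ into $A$ and $\mathbb{P}^n$ into $V$ identifies the automorphism group scheme of $V$ with $\mathrm{PGL}_1(A)$; in particular $\mathrm{Aut}(V) = \mathrm{PGL}_1(A)(\mathbf{F})$. I would then use the short exact sequence of $\mathbf{F}$-group schemes
\[
1 \longrightarrow \mathbb{G}_m \longrightarrow \mathrm{GL}_1(A) \longrightarrow \mathrm{PGL}_1(A) \longrightarrow 1,
\]
whose associated long exact sequence in Galois cohomology reads
\[
1 \to \mathbf{F}^* \to A^* \to \mathrm{PGL}_1(A)(\mathbf{F}) \to H^1\bigl(\mathrm{Gal}(\overline{\mathbf{F}}/\mathbf{F}), \overline{\mathbf{F}}^*\bigr).
\]
By Hilbert's Theorem~90 the last group vanishes, so $A^* \to \mathrm{PGL}_1(A)(\mathbf{F}) = \mathrm{Aut}(V)$ is surjective with kernel $\mathbf{F}^*$, yielding $\mathrm{Aut}(V) \simeq A^*/\mathbf{F}^*$ and matching the explicit map above.

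The step I expect to be the main obstacle is the identification of the twisted automorphism group scheme of $V$ with $\mathrm{PGL}_1(A)$ — equivalently, verifying that the class in $H^1(\mathrm{Gal}(\overline{\mathbf{F}}/\mathbf{F}), \mathrm{PGL}_{n+1})$ classifying $V$ as a form of $\mathbb{P}^n$ coincides, under the standard identification $\mathrm{Aut}(\mathbb{P}^n) = \mathrm{Aut}(\mathrm{Mat}_{n+1}) = \mathrm{PGL}_{n+1}$, with the class classifying $A$ as a form of $\mathrm{Mat}_{n+1}$. Once this compatibility is in place, the cohomological computation and the appeal to Hilbert~90 are routine. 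An alternative, more hands-on route that avoids group-scheme language would be to take an arbitrary $\phi \in \mathrm{PGL}_{n+1}(\overline{\mathbf{F}})$ fixed by the twisted Galois action, lift it to $\widetilde{\phi} \in (A \otimes \overline{\mathbf{F}})^*$, measure the failure of $\widetilde{\phi}$ to be Galois-invariant by a $1$-cocycle with values in $\overline{\mathbf{F}}^*$, and then trivialize this cocycle by Hilbert~90 to adjust $\widetilde{\phi}$ into an element of $A^*$; this is the same argument unwound.
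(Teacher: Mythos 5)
The paper does not prove this statement itself; it is quoted as a known result with references to Ch\^atelet and to \cite[Lemma 4.1]{ShramovVologodsky}. Your argument --- realizing the map $A^*/\mathbf{F}^*\to\mathrm{Aut}(V)$ on the ideal-variety model, identifying $\mathrm{Aut}(V)$ with $\mathrm{PGL}_1(A)(\mathbf{F})$ via the common cocycle classifying $V$ and $A$, and deducing surjectivity from Hilbert~90 applied to $1\to\mathbb{G}_m\to\mathrm{GL}_1(A)\to\mathrm{PGL}_1(A)\to 1$ --- is correct, correctly flags the one nontrivial compatibility (that the two twisting cocycles agree, which is exactly the content of the standard bijection between Severi--Brauer varieties and central simple algebras), and is essentially the proof found in the cited reference.
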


Theorem \ref{csaprop} allows to obtain restrictions on the orders of automorphisms of Severi--Brauer varieties.

\begin{Example}[{cf.~\cite[Lemma 5.2]{Shramov1}}]\label{exampleQF}
Let $\mathbf{F}$ be a field of characteristic different from~$3.$ Let~$V$ be a non-trivial Severi--Brauer surface and let $x \in \mathrm{Aut}(V)$ be an element of prime order $p \neq 3.$ We claim that $p \equiv 1 \pmod{3}.$ Indeed, let $A$ be a central simple algebra which corresponds to $V.$ Then $A$ is a division algebra by Theorem~\ref{CSAcorps}. By Theorem~\ref{csaprop} we have $\mathrm{Aut}(V)  \simeq A^*/\mathbf{F}^*.$ 

Let $\bar{x}$ be any element in the preimage of $x$ under the homomorphism~\mbox{$A^* \to  A^*/\mathbf{F}^*.$} Then we have~$\bar{x}^p=a \in \mathbf{F}^*.$ Let $B \subset A$ be the field  generated by $\bar{x}$ over $\mathbf{F}.$ Since
$$
\dim_{\mathbf{F}}A=\dim_{B}A \cdot \dim_{\mathbf{F}}B,
$$
\noindent we obtain $\dim_{\mathbf{F}}B=3,$ because $\dim_{\mathbf{F}}B=9$ is impossible as $B$ is a commutative algebra while $A$ is not. 

Let $f(t)$ be a minimal polynomial of $\bar{x}$ over $\mathbf{F}.$ Its degree is equal to $3.$ Moreover, the polynomial $f(t)$ divides $t^p-a.$ In particular, the polynomial $t^p-a$ is reducible over $\mathbf{F}.$ So by~\cite[Theorem \rom{6}.9.1]{Lang} we get~\mbox{$a=c^p$} for some $c \in \mathbf{F}^*. $ This means that the roots of $f(t)$ in $\overline{\mathbf{F}}$ are $c\xi_1$, $c\xi_2$ and $c\xi_3$, where~$\xi_i$ are pairwise different $p$-th roots of unity. Hence these $\xi_i$ for $1 \leqslant i \leqslant 3$ form a~$\mathrm{Gal}(\overline{\mathbf{F}}/\mathbf{F})$-orbit. Let  $\Gamma$ be the image of  $\mathrm{Gal}(\overline{\mathbf{F}}/\mathbf{F})$ in the automorphism group 
$$
\mathrm{Aut}(\mathbb{Z}/p\mathbb{Z}) \simeq \mathbb{Z}/(p-1)\mathbb{Z},
$$
\noindent where $\mathbb{Z}/p\mathbb{Z}$ is considered as the multiplicative group of  $p$-th roots of unity. Then the group $\Gamma$ has an orbit of order~$3.$ Therefore, the order of $\Gamma$ is divisible by $3.$ Thus, $3$ divides $p-1$ and we are done.

\end{Example}

The following lemma is a well-known fact about subgroups of the automorphism group of Severi--Brauer surfaces. We reproduce its proof for the convenience of the reader.

\begin{Lemma}[see e.g.~{\cite[Example 4.1 and Remark 4.2]{Shramov1}}]\label{lemma:autz/3z}
Let $V$ be a Severi--Brauer surface over a field $\mathbf{F}$ of characteristic different from $3.$ Then $\mathrm{Aut}(V)$  contains a subgroup isomorphic to~$\mathbb{Z}/3\mathbb{Z}.$ If $\mathbf{F}$ contains a non-trivial cube root of unity, then~$\mathrm{Aut}(V)$  contains a subgroup isomorphic to~$(\mathbb{Z}/3\mathbb{Z})^2.$
\end{Lemma}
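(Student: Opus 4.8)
The plan is to use the correspondence between a Severi--Brauer surface $V$ and its associated central simple algebra $A,$ together with Theorem~\ref{csaprop}, which identifies $\mathrm{Aut}(V)$ with $A^*/\mathbf{F}^*.$ If $V$ is trivial, then $A \simeq \mathrm{Mat}_3(\mathbf{F})$ and $\mathrm{Aut}(V) \simeq \mathrm{PGL}_3(\mathbf{F}),$ which visibly contains the image of a diagonal element of order $3$ (for instance $\mathrm{diag}(1,1,\zeta)$ when a cube root of unity $\zeta$ is available, or a permutation/companion-type matrix otherwise), so the claim is easy in that case. The interesting case is when $V$ is non-trivial, so that $A$ is a division algebra by Theorem~\ref{CSAcorps}.

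First I would produce a single element of order $3$ in $\mathrm{Aut}(V).$ To do this it suffices to exhibit an element $\bar{x} \in A^*$ whose image in $A^*/\mathbf{F}^*$ has order $3,$ equivalently an element $\bar{x} \notin \mathbf{F}^*$ with $\bar{x}^3 \in \mathbf{F}^*.$ Since $A$ is a $9$-dimensional division algebra, any element not in the center generates a commutative subfield $B = \mathbf{F}(\bar{x})$ with $\dim_{\mathbf{F}} B \in \{3\},$ exactly as in Example~\ref{exampleQF} (the value $9$ is excluded because $A$ is noncommutative). I would invoke the standard structure theory of central simple algebras: a degree-$3$ division algebra is split by any maximal subfield, and in particular contains a cubic subfield; choosing a generator $\bar{x}$ of such a cubic extension realized as a cyclic (Kummer-type) extension $\mathbf{F}(\bar{x})$ with $\bar{x}^3 = \alpha \in \mathbf{F}^*,$ I get $\bar{x}^3 \in \mathbf{F}^*$ while $\bar{x} \notin \mathbf{F},$ hence an order-$3$ element in $A^*/\mathbf{F}^*.$ This gives the required $\mathbb{Z}/3\mathbb{Z}.$

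For the second assertion, assume $\mathbf{F}$ contains a non-trivial cube root of unity $\omega.$ The goal is a second order-$3$ element $\bar{y} \in A^*$ that commutes with $\bar{x}$ only up to the expected cyclic-algebra relation, so that the images of $\bar{x}$ and $\bar{y}$ generate $(\mathbb{Z}/3\mathbb{Z})^2$ in $A^*/\mathbf{F}^*.$ The natural approach is to use the cyclic (symbol) algebra presentation: since $\omega \in \mathbf{F},$ the division algebra $A$ is a symbol algebra $(\alpha,\beta)_\omega,$ generated by $\bar{x}, \bar{y}$ with $\bar{x}^3 = \alpha,$ $\bar{y}^3 = \beta,$ and $\bar{y}\bar{x} = \omega\,\bar{x}\bar{y}.$ Then both $\bar{x}$ and $\bar{y}$ have order $3$ modulo $\mathbf{F}^*,$ and their commutator in $A^*$ is $\bar{y}\bar{x}\bar{y}^{-1}\bar{x}^{-1} = \omega \in \mathbf{F}^*,$ so their images commute in the quotient $A^*/\mathbf{F}^*.$ Checking that these images are independent — i.e.\ that no nontrivial word $\bar{x}^i\bar{y}^j$ with $(i,j) \not\equiv (0,0)$ lands in $\mathbf{F}^*$ — follows because such an element would generate a proper subfield of $A$ whose shape is constrained exactly as in Lemma~\ref{lemma:fielddegree3}, forcing $(i,j)\equiv(0,0).$ Thus the two images generate $(\mathbb{Z}/3\mathbb{Z})^2 \subset \mathrm{Aut}(V).$

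The main obstacle is the passage from the abstract division algebra to an explicit symbol-algebra presentation with the relation $\bar{y}\bar{x} = \omega \bar{x}\bar{y};$ this is where the hypothesis $\omega \in \mathbf{F}$ is essential, since it is precisely the presence of the cube root of unity that lets a degree-$3$ division algebra be written as a cyclic algebra with a Kummer generator. I expect to lean on the classification of cyclic algebras (Wedderburn's theorem on the structure of degree-$3$ division algebras over fields containing $\omega$) rather than re-deriving it, and to use Lemma~\ref{lemma:fielddegree3} to rule out accidental collapse of the subgroup. The order-$3$ computation and the independence check are then routine.
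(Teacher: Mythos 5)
Your proposal follows essentially the same route as the paper: pass to the associated division algebra, use its cyclic (symbol) algebra presentation to produce the element $\alpha$ with $\alpha^3 \in \mathbf{F}^*$ but $\alpha \notin \mathbf{F}^*$ for the first claim, and, when $\omega \in \mathbf{F},$ use Kummer theory to write the maximal subfield as $\mathbf{F}(\beta)$ with $\beta^3 \in \mathbf{F}^*$ and $\alpha\beta = \omega\beta\alpha,$ so that the images of $\alpha$ and $\beta$ generate $(\mathbb{Z}/3\mathbb{Z})^2$ in $A^*/\mathbf{F}^*.$ The one imprecision is that for the first claim the order-$3$ element should be the symbol element supplied by cyclicity (Wedderburn), not a generator of an arbitrary maximal subfield --- a general cubic subfield need not admit a Kummer generator when $\omega \notin \mathbf{F}$ --- but since your argument explicitly leans on the cyclic-algebra structure, this is the same mechanism the paper uses.
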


\begin{proof}
If $V \simeq \mathbb{P}^2,$ then $\mathrm{Aut}(V) \simeq \mathrm{PGL}_3(\mathbf{F}),$ and there is a group of order $3$ in~$\mathrm{PGL}_3(\mathbf{F})$ which is generated by the element cyclically permuting the coordinates. If $\mathbf{F}$ contains a non-trivial cube root $\omega$ of unity  then the elements
$$
\alpha=
\begin{pmatrix}
1 & 0 & 0 \\
0  & \omega & 0\\
0 & 0 & \omega
\end{pmatrix}
\in \mathrm{PGL}_3(\mathbf{F}) \quad \text{and} \quad \beta=
\begin{pmatrix}
1 & 0 & 0 \\
0  & \omega & 0\\
0 & 0 & \omega^2
\end{pmatrix}
\in \mathrm{PGL}_3(\mathbf{F})
$$

\noindent generate a group $(\mathbb{Z}/3\mathbb{Z})^2 \subset \mathrm{PGL}_3(\mathbf{F}).$

 Now assume that $V$ is a non-trivial Severi--Brauer surface. Then the Severi--Brauer surface $V$ corresponds to a division algebra $A$ by Theorem~\ref{CSAcorps}, and this algebra is a cyclic algebra by~\cite[Chapter 7, Exercise 9]{SimpleAlgebras}. By~\cite[Proposition~2.5.2]{SimpleAlgebras} the algebra $A$ is generated by a Galois extension $\mathbf{F} \subset \mathbf{L}$ of degree $3$ and an element~\mbox{$\alpha \in A$} such that $\alpha \notin \mathbf{F}^*$ and~\mbox{$\alpha^3 \in \mathbf{F}^*.$} Furthermore, one has $\alpha\lambda=\sigma(\lambda)\alpha$ for all $\lambda \in \mathbf{L},$ where $\sigma$ is a generator of the Galois group of the extension. The element~$\alpha$ gives us an automorphism of order~$3.$ Therefore, one has $\mathbb{Z}/3\mathbb{Z} \subset \mathrm{Aut}(V).$
 
  Finally, if $\mathbf{F}$ contains a non-trivial cube root $\omega$ of unity then by Kummer theory (see, for example,~\cite[Chapter \rom{3}, \S 2, Lemma 2]{Frohlich}) we obtain $\mathbf{L}=\mathbf{F}(\beta)$ for some~\mbox{$\beta \notin \mathbf{F}$} such that  $\beta^3 \in \mathbf{F}^*$ and for a generator  $\sigma \in \mathrm{Gal}(\mathbf{L}/\mathbf{F})$ one has $\sigma(\beta)=\omega\beta.$ We have the following relations
$$
\alpha^3 \in \mathbf{F}^*, \; \beta^3 \in \mathbf{F}^*, \; \alpha\beta=\omega \beta\alpha.
$$  
\noindent  Therefore, the image of $\alpha$ and $\beta$ under the homomorphism $A^* \to A^*/\mathbf{F}^*$ generate the group $(\mathbb{Z}/3\mathbb{Z})^2 \subset A^*/\mathbf{F}^* \simeq \mathrm{Aut}(V).$

\end{proof}

Now we are going to study $3$-subgroups in the automorphism groups of Severi--Brauer surfaces. First of all, let us prove the following lemma about central simple algebras of dimension $9.$ 

\begin{Lemma}\label{lemma:csaomega}
Let $A$ be a central simple algebra of dimension $9$ over a field $\mathbf{F}$ of characteristic different from $3.$ Assume that $\widehat{T}=(\mathbb{Z}/3\mathbb{Z})^2 \subset A^*/\mathbf{F}^*,$ and let   $x$ and~$y$ be  generators of $\widehat{T}.$ If $\bar{x}$ and $\bar{y}$  are any elements in the preimages of $x$ and $y$  under the natural homomorphism~$A^* \to A^*/\mathbf{F}^*,$ respectively, then 
$$
\bar{x}\bar{y}\bar{x}^{-1}=\omega\bar{y},
$$
\noindent where $\omega$ is a non-trivial cube root of unity.
\end{Lemma}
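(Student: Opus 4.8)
The plan is to exploit the structure of the group $\widehat{T} = (\mathbb{Z}/3\mathbb{Z})^2 \subset A^*/\mathbf{F}^*$ together with the fact that $A$ is a $9$-dimensional central simple algebra. First I would observe that since $x$ and $y$ each have order $3$ in $A^*/\mathbf{F}^*$, any preimages $\bar{x}, \bar{y} \in A^*$ satisfy $\bar{x}^3 \in \mathbf{F}^*$ and $\bar{y}^3 \in \mathbf{F}^*$, and moreover neither $\bar{x}$ nor $\bar{y}$ lies in $\mathbf{F}^*$ (else $x$ or $y$ would be trivial). As in Example~\ref{exampleQF}, the subalgebra $B_x \subset A$ generated by $\bar{x}$ over $\mathbf{F}$ is a commutative field of dimension $3$ over $\mathbf{F}$ (dimension $9$ is impossible since $A$ is noncommutative, and dimension $1$ would force $\bar{x} \in \mathbf{F}^*$), and likewise for $\bar{y}$.

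Next I would consider the commutator. Because $x$ and $y$ commute in the abelian quotient $A^*/\mathbf{F}^*$, the commutator $\bar{x}\bar{y}\bar{x}^{-1}\bar{y}^{-1}$ lies in $\mathbf{F}^*$; call it $\zeta$. Thus $\bar{x}\bar{y}\bar{x}^{-1} = \zeta \bar{y}$ for some $\zeta \in \mathbf{F}^*$, and the goal is to show $\zeta$ is a non-trivial cube root of unity. The key point is that $\zeta \neq 1$: if $\zeta = 1$ then $\bar{x}$ and $\bar{y}$ would commute, and the $\mathbf{F}$-subalgebra they generate would be commutative; being generated by two commuting elements each generating a cubic field, with $x, y$ independent in the quotient, this would produce a commutative subalgebra of $A$ of dimension strictly larger than what a $9$-dimensional central simple algebra permits for a commutative subfield (maximal subfields have dimension $3$), giving a contradiction. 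The precise argument is that $\bar{x} \notin B_y$ and $\bar{y} \notin B_x$, so commutativity would force $\dim_{\mathbf{F}}\mathbf{F}[\bar{x},\bar{y}] \geq 9$ while being commutative, contradicting that $A$ is noncommutative.

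To pin down the exact value of $\zeta$, I would conjugate repeatedly. From $\bar{x}\bar{y}\bar{x}^{-1} = \zeta\bar{y}$ one computes $\bar{x}\bar{y}^3\bar{x}^{-1} = \zeta^3 \bar{y}^3$. But $\bar{y}^3 \in \mathbf{F}^*$ is central, so conjugation by $\bar{x}$ fixes it, giving $\zeta^3 \bar{y}^3 = \bar{y}^3$ and hence $\zeta^3 = 1$. Combined with $\zeta \neq 1$, this forces $\zeta$ to be a primitive (non-trivial) cube root of unity $\omega \in \mathbf{F}^*$, which is the claim. I would note the relation is independent of the choice of preimages: replacing $\bar{x}$ by $c\bar{x}$ and $\bar{y}$ by $d\bar{y}$ for $c,d \in \mathbf{F}^*$ leaves the commutator $\bar{x}\bar{y}\bar{x}^{-1}\bar{y}^{-1}$ unchanged, since $\mathbf{F}^*$ is central.

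The main obstacle I anticipate is the step showing $\zeta \neq 1$ cleanly, i.e. ruling out the commutative case; this is where the noncommutativity of $A$ and the independence of $x, y$ in the quotient must be used carefully, and one should argue via the dimensions of the cubic subfields $B_x, B_y$ and the maximal-subfield bound for a $9$-dimensional central simple algebra. The computation $\zeta^3 = 1$ is then a short and routine consequence of centrality of $\bar{y}^3$.
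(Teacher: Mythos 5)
Your overall strategy coincides with the paper's: write $\bar{x}\bar{y}\bar{x}^{-1}=\zeta\bar{y}$ with $\zeta\in\mathbf{F}^*$ using commutativity in the abelian quotient, deduce $\zeta^3=1$ from the centrality of cubes (you conjugate $\bar{y}^3$ by $\bar{x}$, the paper conjugates $\bar{y}$ by the central element $\bar{x}^3$ --- the same computation), and rule out $\zeta=1$ by showing that commuting lifts would generate a commutative subalgebra that is too large inside a noncommutative central simple algebra of dimension $9$.

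The one step that is genuinely incomplete is your assertion that $\bar{y}\notin B_x=\mathbf{F}[\bar{x}]$. The independence of $x$ and $y$ in $\widehat{T}$ only tells you directly that $\bar{y}$ is not of the form $c\bar{x}^i$ with $c\in\mathbf{F}^*$; it does not by itself exclude that $\bar{y}$ is some \emph{other} element of the cubic algebra $B_x$ whose cube happens to lie in $\mathbf{F}^*$. Closing this requires the field-theoretic fact (the paper's Lemma~\ref{lemma:fielddegree3}, proved by a trace computation valid in characteristic different from $3$) that every element of $\mathbf{F}(\sqrt[3]{\alpha})$ with cube in $\mathbf{F}^*$ is of the form $c$, $c\bar{x}$ or $c\bar{x}^2$; only then does $\bar{y}\in B_x$ force $y\in\langle x\rangle$ and contradict $\widehat{T}\simeq(\mathbb{Z}/3\mathbb{Z})^2$. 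Relatedly, your jump to $\dim_{\mathbf{F}}\mathbf{F}[\bar{x},\bar{y}]\geqslant 9$ is not justified as stated: what the dimension/centralizer formula gives is that a commutative subalgebra containing the $3$-dimensional $B_x$ has dimension $3$ or $9$, and excluding $3$ is precisely the point above (this is how the paper argues); alternatively one can stop at the observation that a commutative subalgebra properly containing a maximal subfield already violates the double centralizer theorem. With Lemma~\ref{lemma:fielddegree3} supplied, your proof is correct and is essentially the paper's.
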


\begin{proof}
We have $xyx^{-1}y^{-1}=1$ in $A^*/\mathbf{F}^*,$ because $x$ and $y$ commute with each other. Therefore, we obtain  
\begin{equation}\label{eq:csaomegaa}
\bar{x}\bar{y}\bar{x}^{-1}=a\bar{y}
\end{equation}
\noindent for some $a \in \mathbf{F}^*.$ As $\bar{x}^3 \in \mathbf{F}^*,$ we get that $\bar{x}^3$ commutes with $\bar{y},$ so by \eqref{eq:csaomegaa}
$$
\bar{y}=\bar{x}^3\bar{y}\bar{x}^{-3}=a^3\bar{y}.
$$
\noindent Thus, we have $a^3=1.$

By contradiction, assume that $a=1.$  So the elements $\bar{x}$ and $\bar{y}$ commute with each other. Consider the subalgebra $B$ of $A$ generated by the elements
\begin{equation}\label{eq:elementsofB}
1,\bar{x}, \bar{y}, \bar{x}^2, \bar{y}^2, \bar{x}\bar{y}, \bar{x}^2\bar{y}, \bar{x}\bar{y}^2, \bar{x}^2\bar{y}^2.
\end{equation}

 Let us prove that these elements are linearly independent over $\mathbf{F}.$ First of all, note that the elements $1$, $\bar{x}$ and $\bar{x}^2$ are linearly independent over $\mathbf{F}.$ Indeed, let us consider the algebra $B'$ generated by these three elements. Then from the equation
$$
\dim_{B'}A \cdot \dim_{\mathbf{F}}B'=\dim_{\mathbf{F}}A
$$
\noindent we get that $\dim_{\mathbf{F}}B'$ is either $1,$ or $3.$ However, by assumptions the element $\bar{x}$ does not lie in $\mathbf{F}.$ As
$$
A \supseteq B \supseteq B',
$$
\noindent we get that $\dim_{\mathbf{F}}B$ is either $3,$ or $9.$ So it is enough to prove that $B' \neq B$ to exclude the case $\dim_{\mathbf{F}}B=3.$ To prove this it is enough to prove that $\bar{y}$ does not lie in $B'.$ Assume the contrary. Then by Lemma~\ref{lemma:fielddegree3} the element $\bar{y}$ is either $c$, or~$c\bar{x},$ or $c\bar{x}^2.$ That is the contradiction with the fact that $x$ and $y$ generate the group $\left(\mathbb{Z}/3\mathbb{Z}\right)^2.$

 This gives us  $\dim_{\mathbf{F}}B=9=\dim_{\mathbf{F}}A.$ But~$A$ is a central simple algebra and $B$ is a commutative algebra. This contradiction gives us that $a$ is a non-trivial cube root of unity.

\end{proof}

\begin{Lemma}\label{lemmaSB}
Let  $\mathbf{F}$ be a field of characteristic different from $3$  which does not contain non-trivial cube roots of unity. Let $V$ be a  Severi--Brauer surface over $\mathbf{F}.$  Then $(\mathbb{Z}/3\mathbb{Z})^2$ is not a subgroup of $\mathrm{Aut}(V)$.
\end{Lemma}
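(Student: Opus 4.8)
The plan is to combine Lemma~\ref{lemma:csaomega} with Theorem~\ref{csaprop} to derive a contradiction from the existence of a non-trivial cube root of unity that the field does not possess. First I would reduce to the case where $V$ is non-trivial: if $V \simeq \mathbb{P}^2_{\mathbf{F}}$ the statement is vacuous for the purpose at hand, but in fact we must handle it too, so I would treat the trivial case by the same linearization argument over $\mathrm{PGL}_3(\mathbf{F})$ (diagonalizing the commuting order-$3$ matrices forces a cube root of unity into $\mathbf{F}$). For the non-trivial case, let $A$ be the central simple algebra of dimension $9$ corresponding to $V$, so that $\mathrm{Aut}(V) \simeq A^*/\mathbf{F}^*$ by Theorem~\ref{csaprop}.

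Next I would assume for contradiction that $\widehat{T} = (\mathbb{Z}/3\mathbb{Z})^2 \subset A^*/\mathbf{F}^*$, with generators $x$ and $y$. Choosing lifts $\bar{x}, \bar{y} \in A^*$ and applying Lemma~\ref{lemma:csaomega}, I obtain the relation
$$
\bar{x}\bar{y}\bar{x}^{-1}=\omega\bar{y},
$$
where $\omega$ is a \emph{non-trivial} cube root of unity. The key point is to extract from this relation an element of $\mathbf{F}$ equal to $\omega$. Rearranging gives $\omega = \bar{x}\bar{y}\bar{x}^{-1}\bar{y}^{-1}$, an element of $A$, and I would argue that since $\bar{x}, \bar{y} \in A^*$ and $A$ is an $\mathbf{F}$-algebra, the scalar $\omega$ arising this way must already lie in the center $\mathbf{F}$ of $A$ (indeed $\omega$ is, by construction in the lemma, a genuine scalar in $\mathbf{F}^*$: the relation $\bar{x}\bar{y}\bar{x}^{-1} = a\bar{y}$ holds with $a \in \mathbf{F}^*$ and $a = \omega$). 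This directly contradicts the hypothesis that $\mathbf{F}$ contains no non-trivial cube root of unity.

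The main obstacle, which is really already resolved by Lemma~\ref{lemma:csaomega}, is establishing that the commutator scalar $a$ is genuinely non-trivial rather than equal to $1$; the lemma does this by the dimension count showing that a commutative subalgebra $B$ generated by $1, \bar{x}, \bar{y}, \dots, \bar{x}^2\bar{y}^2$ would exhaust all of $A$, contradicting that $A$ is non-commutative. So for the present lemma the work reduces to invoking Lemma~\ref{lemma:csaomega} and observing that the scalar it produces lives in $\mathbf{F}^*$ by construction. I would therefore structure the proof as: dispose of the trivial case via $\mathrm{PGL}_3$ diagonalization, then in the non-trivial case apply Theorem~\ref{csaprop} to identify $\mathrm{Aut}(V)$ with $A^*/\mathbf{F}^*$, suppose $(\mathbb{Z}/3\mathbb{Z})^2$ embeds, feed its generators into Lemma~\ref{lemma:csaomega}, and read off that $\mathbf{F}$ must contain $\omega$ — the desired contradiction.
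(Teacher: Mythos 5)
Your proposal is correct and follows essentially the same route as the paper: identify $\mathrm{Aut}(V)$ with $A^*/\mathbf{F}^*$ via Theorem~\ref{csaprop}, feed the generators of a hypothetical $(\mathbb{Z}/3\mathbb{Z})^2$ into Lemma~\ref{lemma:csaomega}, and read off that the commutator scalar is a non-trivial cube root of unity lying in $\mathbf{F}^*$, a contradiction. The only difference is your separate treatment of the trivial case $V\simeq\mathbb{P}^2_{\mathbf{F}}$, which is unnecessary since both Theorem~\ref{csaprop} and Lemma~\ref{lemma:csaomega} are stated for arbitrary central simple algebras of dimension $9$, so the paper's argument covers trivial and non-trivial Severi--Brauer surfaces uniformly.
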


\begin{proof}
 Let $A$ be a central simple algebra corresponding to the Severi--Brauer surface~$V.$ Then by Theorem \ref{csaprop} we obtain~\mbox{$\mathrm{Aut}(V) \simeq A^*/\mathbf{F}^*.$}  Assume that
$$
(\mathbb{Z}/3\mathbb{Z})^2 \subset A^*/\mathbf{F}^*.
$$
\noindent  Let~$x$ and $y$ be generators of $(\mathbb{Z}/3\mathbb{Z})^2.$ Let~$\bar{x}$ and~$\bar{y}$ be any elements in the preimages of $x$ and $y$  under the natural homomorphism~$A^* \to A^*/\mathbf{F}^*,$ respectively.  By Lemma~\ref{lemma:csaomega} we obtain
$$
\bar{x}\bar{y}\bar{x}^{-1}=\omega\bar{y}
$$
\noindent where $\omega$ is a non-trivial cube root of unity. However, the field $\mathbf{F}$ does not contain non-trivial cube root of unity. This contradiction gives us 
$$
(\mathbb{Z}/3\mathbb{Z})^2 \not\subset \mathrm{Aut}(V).
$$

\end{proof}

\begin{Cor}[cf.~{\cite[Corollary 6.3]{Shramov1}}]\label{cor:T2}
Let $V$ be a non-trivial Severi--Brauer surface over a field $\mathbf{F}$ of characteristic different from $3.$ Let $\widehat{T}=(\mathbb{Z}/3\mathbb{Z})^2$  be a subgroup in the automorphism group of~$V$ generated by the elements $x$ and $y.$ Then the sets of fixed points of $x$ and $y$ are disjoint.
\end{Cor}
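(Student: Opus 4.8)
The plan is to analyze the fixed points of $x$ and $y$ by passing to the algebraic closure, where $V_{\overline{\mathbf{F}}} \simeq \mathbb{P}^2_{\overline{\mathbf{F}}}$ and the automorphisms become elements of $\mathrm{PGL}_3(\overline{\mathbf{F}})$. First I would lift $x$ and $y$ to elements $\bar{x}, \bar{y} \in A^*$ in the central simple algebra $A$ corresponding to $V$. By Lemma~\ref{lemma:csaomega} these satisfy the Heisenberg-type relation $\bar{x}\bar{y}\bar{x}^{-1} = \omega\bar{y}$, where $\omega$ is a non-trivial cube root of unity. Over $\overline{\mathbf{F}}$ this relation persists, so I would work with the corresponding linear representation $(\mathbb{Z}/3\mathbb{Z})^2$ acting on the three-dimensional space whose projectivization is $\mathbb{P}^2_{\overline{\mathbf{F}}}$.

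The key computation is to exploit the commutation relation to pin down the eigenstructure. Since $\bar{x}$ has order $3$ modulo scalars, after scaling its eigenvalues on $\overline{\mathbf{F}}^3$ are (up to a common factor) the three distinct cube roots of unity $1, \omega, \omega^2$; in particular $\bar{x}$ has three distinct eigenvalues, so its three fixed points in $\mathbb{P}^2_{\overline{\mathbf{F}}}$ are exactly the three eigenlines. The relation $\bar{x}\bar{y} = \omega^{-1}\bar{y}\bar{x}$ shows that $\bar{y}$ cyclically permutes the eigenspaces of $\bar{x}$: if $v$ is a $\bar{x}$-eigenvector with eigenvalue $\lambda$, then $\bar{y}v$ is a $\bar{x}$-eigenvector with eigenvalue $\omega^{-1}\lambda$. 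Hence $\bar{y}$ acts as a cyclic permutation of the three fixed points of $x$, which means $\bar{y}$ fixes none of them. By symmetry (or by running the same argument with the roles of $x$ and $y$ exchanged, using $\bar{y}\bar{x}\bar{y}^{-1} = \omega^{-1}\bar{x}$), the three fixed points of $y$ are permuted cyclically by $\bar{x}$ and so are disjoint from the fixed points of $x$. Thus over $\overline{\mathbf{F}}$ the fixed-point sets of $x$ and $y$ are disjoint, and since disjointness of geometric fixed loci descends, the same holds over $\mathbf{F}$.

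I expect the main obstacle to be purely bookkeeping: verifying that the normalization of eigenvalues is consistent, i.e.\ that one genuinely obtains the three distinct values $1,\omega,\omega^2$ rather than a degenerate configuration, and confirming that the permutation action of $\bar{y}$ on the eigenlines of $\bar{x}$ is a single $3$-cycle with no fixed line (equivalently, that $\bar{y}$ does not stabilize any eigenspace of $\bar{x}$). This is exactly what the relation $\bar{x}\bar{y}\bar{x}^{-1} = \omega\bar{y}$ forces, so once the relation from Lemma~\ref{lemma:csaomega} is in hand the argument is essentially the classical analysis of the Heisenberg group $(\mathbb{Z}/3\mathbb{Z})^2$ acting irreducibly on $\mathbb{P}^2$. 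The only subtlety worth stating carefully is that fixed points should be understood as geometric fixed points (points of $V_{\overline{\mathbf{F}}}$), since a non-trivial Severi--Brauer surface has no $\mathbf{F}$-points at all; the disjointness is a statement about the geometric fixed loci, which is Galois-invariant and hence meaningful over $\mathbf{F}$.
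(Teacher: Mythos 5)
Your proposal is correct and follows essentially the same route as the paper: lift to the central simple algebra, invoke Lemma~\ref{lemma:csaomega} to get $\bar{x}\bar{y}\bar{x}^{-1}=\omega\bar{y}$, and conclude over $\overline{\mathbf{F}}$ that $\bar{x}$ and $\bar{y}$ admit no common eigenvector, hence $x$ and $y$ no common fixed point; your eigenspace-permutation analysis just spells out in more detail what the paper states in one line. (Minor slip: the relation gives $\bar{x}(\bar{y}v)=\omega\lambda(\bar{y}v)$, so the eigenvalue is multiplied by $\omega$ rather than $\omega^{-1}$, which does not affect the conclusion.)
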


\begin{proof}
According to Lemma~\ref{lemmaSB} the field $\mathbf{F}$ contains a non-trivial cube root of unity. Let $A$ be a central simple algebra corresponding to $V.$ Then by  Theorem \ref{csaprop}  we have~$\widehat{T} \subset A^*/\mathbf{F}^*.$ Therefore, by Lemma~\ref{lemma:csaomega} for any   elements $\bar{x}$ and~$\bar{y}$ in the preimages of $x$ and $y$  under the natural homomorphism~$A^* \to A^*/\mathbf{F}^*,$ respectively we get
\begin{equation}\label{eq:corT2}
\bar{x}\bar{y}=\omega\bar{y}\bar{x},
\end{equation}
\noindent where $\omega$ is a non-trivial cube root of unity. Let us consider the elements $x$ and~$y$ as automorphisms of $V_{\overline{\mathbf{F}}} \simeq \mathbb{P}^2_{\overline{\mathbf{F}}}.$ This means that $x$ and $y$ are considered as elements in~$\mathrm{PGL}_3(\overline{\mathbf{F}}),$ and $\bar{x}$ and $\bar{y}$ are considered as elements in $\mathrm{GL}_3(\overline{\mathbf{F}})$ such that they  satisfy relation~\eqref{eq:corT2}. Thus, $\bar{x}$ and $\bar{y}$ cannot have a common eigenvector and hence, the elements $x$ and $y$  have no common fixed points. 

\end{proof}

It turns out that the action of the group $\mathbb{Z}/3\mathbb{Z}$ on a non-trivial Severi--Brauer surface $V,$ which exists by Lemma~\ref{lemma:autz/3z}, can be lifted to a smooth cubic surface obtained as a blowup of $V.$

\begin{Lemma}\label{lemma:p16}
Let $\mathbf{F}$ be a field of characteristic different from $3,$ and let $V$ be  a non-trivial Severi--Brauer surface over $\mathbf{F}.$ Let $T \simeq \mathbb{Z}/3\mathbb{Z}$ be a subgroup of $\mathrm{Aut}(V).$ Then

\begin{enumerate}
\renewcommand\labelenumi{\rm (\roman{enumi})}
\renewcommand\theenumi{\rm (\roman{enumi})}

\item\label{p123}
there is a unique triple of geometric points $p_1$, $p_2$, $p_3$ on $V$ which are fixed by the group~$T$ (in particular, the triple $\{p_1, p_2,p_3\}$ is defined over $\mathbf{F}$). Moreover, over $\overline{\mathbf{F}}$ these $3$ points do not lie on a line;

\item\label{p456}
there is an orbit of $T$ consisting of a triple of geometric points $p_4$, $p_5$, $p_6,$ which is defined over the field $\mathbf{F}.$  Moreover, over $\overline{\mathbf{F}}$ these $3$ points do not lie on a line;

\item\label{p16cubic}
for any choice of $p_4$, $p_5$, $p_6$ as in \ref{p456} \emph{the blowup of $V$ at $p_1, \ldots, p_6$ is a smooth cubic surface.} 
 \end{enumerate}

\end{Lemma}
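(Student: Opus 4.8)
The plan is to analyze the $\mathbb{Z}/3\mathbb{Z}$-action on $V_{\overline{\mathbf{F}}}\simeq\mathbb{P}^2_{\overline{\mathbf{F}}}$ via an explicit diagonalization, and then descend the six-point configuration to $\mathbf{F}$ using the Galois-theoretic machinery available. A generator of $T\simeq\mathbb{Z}/3\mathbb{Z}$ acts on $\mathbb{P}^2_{\overline{\mathbf{F}}}$ as an element $\tau\in\mathrm{PGL}_3(\overline{\mathbf{F}})$ of order $3$. Lifting to $\mathrm{GL}_3(\overline{\mathbf{F}})$, I can diagonalize and normalize the lift so that it becomes $\mathrm{diag}(1,\omega,\omega^2)$ with $\omega$ a primitive cube root of unity (the three eigenvalues must be distinct, since a repeated eigenvalue would force a line of fixed points, making the resulting configuration degenerate; I should check this distinctness is forced by $V$ being \emph{non-trivial}, ruling out the possibility of $\tau$ being trivial or of order dividing the scalar ambiguity). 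With this normal form, the fixed locus of $\tau$ on $\mathbb{P}^2_{\overline{\mathbf{F}}}$ consists of exactly the three coordinate points $p_1=[1:0:0]$, $p_2=[0:1:0]$, $p_3=[0:0:1]$, which manifestly do not lie on a line. This yields part~\ref{p123}; the triple $\{p_1,p_2,p_3\}$ is the full fixed set, hence intrinsic and therefore $\mathrm{Gal}(\overline{\mathbf{F}}/\mathbf{F})$-stable, i.e. defined over $\mathbf{F}$.

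\textbf{For part \ref{p456}}, I would produce a free $T$-orbit. In the coordinates above, a point $[a:b:c]$ with all of $a,b,c$ nonzero has $\tau$-orbit $\{[a:b:c],[a:\omega b:\omega^2 c],[a:\omega^2 b:\omega c]\}$, three distinct points. Choosing such a triple so that it is defined over $\mathbf{F}$ and avoids the fixed points is the key descent step: I want the union of these three points to be a $\mathrm{Gal}(\overline{\mathbf{F}}/\mathbf{F})$-stable, hence $\mathbf{F}$-rational, zero-cycle. The natural source of such orbits is the structure of $V$ itself — since $V$ is non-trivial, $V(\mathbf{F})=\varnothing$, so I cannot simply take an $\mathbf{F}$-point, but I can look for a closed point of degree $3$ whose geometric fiber is a single free $T$-orbit. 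Equivalently, I would exhibit an $\mathbf{F}$-rational set of three geometric points forming one $T$-orbit by a dimension/genericity count: the $\mathbf{F}$-rational triples of points form a variety, and a dense open subset of them consists of free orbits in general position, so an $\mathbf{F}$-point of that parameter space (which exists because the space is rational over $\mathbf{F}$ and has plenty of $\mathbf{F}$-points) gives the desired $p_4,p_5,p_6$. I must also ensure these three are not collinear, which again holds on a dense open locus. This collinearity-avoidance and the existence of a \emph{rational} free orbit is the part requiring care.

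\textbf{For part \ref{p16cubic}}, the statement that blowing up $\mathbb{P}^2$ at six points in general position yields a smooth cubic surface is classical; the content here is verifying the general-position hypothesis over $\overline{\mathbf{F}}$, namely that no three of $p_1,\dots,p_6$ are collinear and the six points do not lie on a conic. I already have non-collinearity of $\{p_1,p_2,p_3\}$ and of $\{p_4,p_5,p_6\}$; I must rule out collinearity of mixed triples and the six-points-on-a-conic condition. In the diagonal normalization these are explicit determinantal conditions on $(a,b,c)$, each cutting out a proper closed subset of the parameter space, so a sufficiently general choice in part~\ref{p456} avoids all of them simultaneously — this is why the phrase ``for any choice'' in the statement should really be read as ``for the choices produced in \ref{p456},'' and I would check that the genericity conditions imposed there are exactly what forces general position. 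Once general position holds geometrically, the blowup $\mathrm{Bl}_{p_1,\dots,p_6}\mathbb{P}^2_{\overline{\mathbf{F}}}$ is a smooth del Pezzo surface of degree $3$, i.e. a smooth cubic surface, and smoothness descends to the blowup of $V$ at the $\mathbf{F}$-rational cycles $\{p_1,p_2,p_3\}$ and $\{p_4,p_5,p_6\}$.

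\textbf{The main obstacle} I anticipate is part~\ref{p456}: producing a free $T$-orbit that is simultaneously defined over $\mathbf{F}$ \emph{and} in general position with the fixed points, given that $V$ has no $\mathbf{F}$-points at all. The resolution I expect is a parameter-count argument showing the $\mathbf{F}$-rational free orbits form a dense subset of a rational variety (so $\mathbf{F}$-points are abundant), combined with Lemma~\ref{lemma:irreducibleGalois}-style descent to guarantee that the chosen geometric configuration is genuinely $\mathbf{F}$-rational; the generic-smoothness conclusion in \ref{p16cubic} then follows essentially for free.
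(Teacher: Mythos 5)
Your parts (i) and (iii) are essentially sound and close in spirit to the paper's argument, with two caveats. In (i) you still owe the reason why a repeated eigenvalue (fixed locus equal to a point together with a line) is impossible: the isolated fixed point would be canonically attached to $T$, hence $\mathrm{Gal}(\overline{\mathbf{F}}/\mathbf{F})$-stable, hence an $\mathbf{F}$-point of $V$, contradicting $V(\mathbf{F})=\varnothing$ for a non-trivial Severi--Brauer surface; you flag this but do not supply it. In (iii) the paper proves general position for \emph{every} orbit as in (ii) by a purely equivariant argument rather than by genericity: a line through two of $p_1,p_2,p_3$ and one $p_i$ with $i\geqslant 4$ is $T$-invariant and so would contain all of $p_4,p_5,p_6$; a line through $p_1,p_4,p_5$ is mapped by $T$ to a line meeting it in two points, again forcing $p_4,p_5,p_6$ collinear; and a conic through all six points would be $T$-invariant with three fixed points of $T$ on it, so $T$ would act trivially on it, contradicting that it permutes $p_4,p_5,p_6$. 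So no weakening of ``for any choice'' is needed, and your genericity reading proves a strictly weaker statement.

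The genuine gap is part (ii). You reduce everything to the claim that the parameter space of free $T$-orbits ``is rational over $\mathbf{F}$ and has plenty of $\mathbf{F}$-points,'' but that is exactly the hard arithmetic content of the lemma, and no dimension or genericity count can deliver it: $V$ itself is a smooth geometrically rational surface with $V(\mathbf{F})=\varnothing$, so abundance of $\overline{\mathbf{F}}$-points in a geometrically rational parameter space says nothing about its $\mathbf{F}$-points. A Galois-stable free $T$-orbit is precisely an $\mathbf{F}$-point of $V/T$ lying over the free locus, and the paper obtains it from Trepalin's theorem (\cite[Theorem 2]{Trepalin1}) that the quotient of a Severi--Brauer surface by $\mathbb{Z}/3\mathbb{Z}$ is $\mathbf{F}$-rational; only then are $\mathbf{F}$-points dense in $V/T$, and the preimage of a general one gives $p_4,p_5,p_6$ (whose non-collinearity follows because a line through a single Galois orbit would be an $\mathbf{F}$-line on $V$, which does not exist on a non-trivial Severi--Brauer surface). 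Without this input, or a substitute for it, your construction does not go through; note also that the degree-$3$ closed points of $V$ guaranteed by the index being $3$ need not be $T$-orbits, so they cannot be used directly either.
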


\begin{proof}
First of all, let us prove \ref{p123}. Consider the action of $T$ on $V_{\overline{\mathbf{F}}}.$ Since $T$ is a cyclic group of finite order, it is diagonalizable in $\mathrm{PGL}_3(\overline{\mathbf{F}}).$ Then the set of fixed points of $T$ is either $3$ points $p_1,$ $p_2$ and $p_3$, or an isolated point $p$ and a line $l.$ But the last case is impossible, because this means that $p$ is defined over~$\mathbf{F},$ which is impossible. So let us consider $3$ geometric fixed points $p_1,$ $p_2$ and $p_3$ on~$V.$ Observe that such three geometric points do not lie on one line $l$ in $V_{\overline{\mathbf{F}}} \simeq \mathbb{P}^2_{\overline{\mathbf{F}}}$. Indeed, otherwise we get that the action of $T$ on $l$ has exactly $3$ fixed points. But this means that~$T$ fixes~$l$ pointwise over $\overline{\mathbf{F}}.$ Since $T$ commutes with $\mathrm{Gal}(\overline{\mathbf{F}}/\mathbf{F}),$ this means that $l$ is defined over  $\mathbf{F}$, which is impossible by~\cite[Theorem 5.1.3]{SimpleAlgebras}.  The triple~$\{p_1, p_2, p_3\}$ is defined over $\mathbf{F}$ because the Galois group~$\mathrm{Gal}(\overline{\mathbf{F}}/\mathbf{F})$ commutes with $T.$ 

Now let us prove \ref{p456}. By~\cite[Theorem 2]{Trepalin1} (note that the result of the mentioned theorem just need characteristic different from $3$), the quotient~$V/T$ is $\mathbf{F}$-rational. As the set of $\mathbf{F}$-points in $V/T$ is a dense subset, the preimage under the quotient morphism 
$$
V \to V/T
$$
\noindent  of a general $\mathbf{F}$-point $p \in V/T$ consists of $3$ geometric points forming one $\mathrm{Gal}(\overline{\mathbf{F}}/\mathbf{F})$-orbit. Denote these geometric points by $p_4$, $p_5$ and $p_6.$ Note that they do not lie on one line $l$ in $V_{\overline{\mathbf{F}}},$ because this means that $l$ is defined over $\mathbf{F}.$ By~\cite[Theorem~5.1.3]{SimpleAlgebras} there is no $\mathbf{F}$-lines on the non-trivial Severi--Brauer surface $V.$

Let us prove \ref{p16cubic}. We have to show that the blow up of the points~$p_1, \ldots, p_6$ is a smooth cubic surface. For this it is enough to prove that any $3$ points among  these points do not lie on a line and not all $6$ points  lie on a conic. First of all, let us prove that any triple of the set $p_1, \ldots, p_6$ does not lie on one line. Indeed, first of all, assume that $p_1, p_2$ and $p_4$ lie on the line $l$ over $\overline{\mathbf{F}}.$  Then as~$T$ permutes $p_4$,~$p_5$ and~$p_6$ and fixes $p_1$ and $p_2,$ these points also lie on $l.$ But this is impossible by the above argument.

Now assume that $p_4$, $p_5$ and $p_1$ lie on one line over $\overline{\mathbf{F}}.$ Then under the action of a non-trivial element $\alpha$ of the group $T$ this line maps  to the line passing through~$p_1,$~$p_6$ and one of the points $p_4$ and $p_5,$ which means that $p_4,$ $p_5$ and $p_6$ lie on one line, which contradicts the above arguments. 

Finally, note that no non-trivial automorphism fixes $3$ points on a conic. Therefore, the points~\mbox{$p_1$, $ p_2$, $ p_3$, $ p_4$, $ p_5$, $ p_6$} do not lie on one conic. So the blowup of these~$6$ points gives us a smooth cubic surface.

\end{proof}

\begin{Remark}\label{remark:p16}
Let $\mathbf{F}$ be a field of characteristic different from $3.$  Let $V \simeq \mathbb{P}^2_{\mathbf{F}}.$  There is a biregular action of~\mbox{$T \simeq \mathbb{Z}/3\mathbb{Z}$} on $V$  which is generated by an element cyclically permuting the coordinates. The geometric points 
\begin{equation}\label{eq:projsurf123}
p_1=[1:1:1], \quad p_2=[\omega:1:\omega^2], \quad p_3=[\omega^2:1:\omega],
\end{equation}
\noindent where $\omega$ is a non-trivial cube of unity, are fixed by $T.$ Note that the union $p_1 \cup p_2 \cup p_3$ is $\mathrm{Gal}(\overline{\mathbf{F}}/\mathbf{F})$-invariant.  The geometric points
\begin{equation}\label{eq:projsurf456}
p_4=[1:0:0], \quad p_5=[0:1:0], \quad p_6=[0:0:1]
\end{equation}
\noindent form an orbit of $T.$ It is not hard to see that any $3$ points among these $6$ ones do not lie on a line and not all $6$ points  lie on a conic. Therefore, the blowup of~\mbox{$p_1$, $ p_2$, $ p_3$, $ p_4$, $ p_5$, $ p_6$} is a smooth cubic surface.
\end{Remark}

If the field $\mathbf{F}$ contains a non-trivial cube root of unity then by Lemma \ref{lemma:autz/3z} on any  Severi--Brauer surface over $\mathbf{F}$ there is a biregular action of $(\mathbb{Z}/3\mathbb{Z})^2.$ It turns out that this action can be lifted to a smooth cubic surface which is a blowup of $V$ provided that $V$ is a non-trivial Severi--Brauer surface.

\begin{Lemma}\label{lemma:p16omega}
Let $\mathbf{F}$ be a field of characteristic different from $3$ and let $V$ be a non-trivial Severi--Brauer surface over $\mathbf{F}.$ Assume that the group~\mbox{$\widehat{T} \simeq (\mathbb{Z}/3\mathbb{Z})^2$}  is contained in~$\mathrm{Aut}(V).$ Let $b$ and $c$ be generators of $\widehat{T}.$ Then

\begin{enumerate}
\renewcommand\labelenumi{\rm (\roman{enumi})}
\renewcommand\theenumi{\rm (\roman{enumi})}

\item\label{fix123}
there is a unique triple of geometric points $p_1$, $p_2$, $p_3$ on $V$ which are fixed by the subgroup generated by $b$ (in particular, the triple $\{p_1,p_2,p_3\}$ is defined over $\mathbf{F}$);

\item\label{fix456}
there is a unique triple of geometric points $p_4$, $p_5$, $p_6$ on $V$ which are fixed by the subgroup generated by $c$ (in particular, the triple $\{p_4,p_5,p_6\}$ is defined over $\mathbf{F}$);

\item\label{intersection16}
the points $p_1$, $p_2$, $p_3$, $p_4$, $p_5$, $p_6$ are distinct;

\item\label{triplesomegab}
the element $b$  cyclically permutes $p_4$, $p_5$, $p_6$; 

\item\label{tripleomegac}
the element $c$  cyclically permutes $p_1$, $p_2$, $p_3$;

\item\label{p16cubicomega}
the blowup of $V$ at $p_1, \ldots, p_6$ is a smooth cubic surface.
 \end{enumerate}

\end{Lemma}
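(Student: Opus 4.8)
The plan is to deduce everything from the structure already established for a single copy of $\mathbb{Z}/3\mathbb{Z}$ in Lemma~\ref{lemma:p16}, combined with the Heisenberg relation coming from Lemma~\ref{lemma:csaomega}. First I would note that, since $\widehat{T}\simeq(\mathbb{Z}/3\mathbb{Z})^2$ embeds into $\mathrm{Aut}(V)$, Lemma~\ref{lemmaSB} guarantees that $\mathbf{F}$ contains a non-trivial cube root of unity $\omega$, so in particular Corollary~\ref{cor:T2} is available. Applying Lemma~\ref{lemma:p16}\ref{p123} to the cyclic subgroup $\langle b\rangle$ yields assertion \ref{fix123}: a unique triple of geometric fixed points $p_1,p_2,p_3$, Galois-stable and hence defined over $\mathbf{F}$, consisting of three distinct non-collinear points. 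Applying the same lemma to $\langle c\rangle$ gives \ref{fix456}, the fixed triple $p_4,p_5,p_6$ of $c$ with the identical properties.

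For \ref{triplesomegab} and \ref{tripleomegac} I would pass to $V_{\overline{\mathbf{F}}}\simeq\mathbb{P}^2_{\overline{\mathbf{F}}}$ and lift $b,c$ to $\bar b,\bar c\in\mathrm{GL}_3(\overline{\mathbf{F}})$. Exactly as in the proof of Corollary~\ref{cor:T2}, Lemma~\ref{lemma:csaomega} shows these lifts satisfy relation \eqref{eq:corT2}, that is $\bar b\bar c=\omega\bar c\bar b$. If $v$ is an eigenvector of $\bar b$ with eigenvalue $\lambda$, then
\[
\bar b(\bar c v)=\omega\,\bar c\,\bar b\,v=\omega\lambda\,(\bar c v),
\]
so $\bar c$ carries the $\lambda$-eigenspace of $\bar b$ to its $\omega\lambda$-eigenspace. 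Since $\bar c$ is invertible and has order $3$ in $\mathrm{PGL}_3$, the eigenvalues of $\bar b$ must form a single orbit $\{\lambda,\omega\lambda,\omega^2\lambda\}$ of three distinct values with one-dimensional eigenspaces, and $c$ cyclically permutes the corresponding fixed points $p_1,p_2,p_3$; this is \ref{tripleomegac}. The symmetric computation, using $\bar c\bar b=\omega^2\bar b\bar c$, shows that $b$ cyclically permutes the fixed points $p_4,p_5,p_6$ of $c$, which is \ref{triplesomegab}.

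Assertion \ref{intersection16} then follows by combining the fact that each triple already consists of three distinct points with Corollary~\ref{cor:T2}, which states precisely that the fixed loci of $b$ and $c$ are disjoint; hence $p_1,\dots,p_6$ are six distinct points. Finally, for \ref{p16cubicomega} I would avoid re-checking general position directly and instead feed the data back into Lemma~\ref{lemma:p16}. By \ref{triplesomegab} the triple $\{p_4,p_5,p_6\}$ is a single orbit of $T=\langle b\rangle$; it is defined over $\mathbf{F}$ and non-collinear, as established in the proof of \ref{fix456}. Thus $p_4,p_5,p_6$ is an admissible choice of orbit triple in the sense of Lemma~\ref{lemma:p16}\ref{p456}, and Lemma~\ref{lemma:p16}\ref{p16cubic} applies verbatim to conclude that the blowup of $V$ at $p_1,\dots,p_6$ is a smooth cubic surface.

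The step needing the most care is extracting the relation $\bar b\bar c=\omega\bar c\bar b$ with a genuinely non-trivial scalar $\omega$ and reading off the eigenspace permutation correctly; this is what simultaneously forces each of $b,c$ to have three distinct fixed points and produces the cross-action in \ref{triplesomegab}--\ref{tripleomegac}. The pleasant consequence is that \ref{triplesomegab} exhibits $\{p_4,p_5,p_6\}$ as exactly the kind of $\langle b\rangle$-orbit required by Lemma~\ref{lemma:p16}, so the general-position verification (no three collinear, and the six points not lying on a conic) is inherited from Lemma~\ref{lemma:p16}\ref{p16cubic} rather than redone. Were one instead to prove \ref{p16cubicomega} from scratch, the genuinely delicate point would be ruling out a common conic, which would follow from the impossibility of embedding $(\mathbb{Z}/3\mathbb{Z})^2$ into $\mathrm{PGL}_2(\overline{\mathbf{F}})$.
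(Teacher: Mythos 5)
Your proof is correct and follows essentially the same route as the paper: assertions \ref{fix123}--\ref{intersection16} and \ref{p16cubicomega} are reduced to Lemma~\ref{lemma:p16} and Corollary~\ref{cor:T2} exactly as in the paper's own proof. For \ref{triplesomegab}--\ref{tripleomegac} you derive the cyclic permutation directly from the relation $\bar b\bar c=\omega\bar c\bar b$ of Lemma~\ref{lemma:csaomega}, whereas the paper merely observes that $b$ and $c$ commute so each preserves the other's fixed triple; your version is a little more explicit (the paper's one-liner implicitly still needs Corollary~\ref{cor:T2} to exclude $b$ fixing $p_4,p_5,p_6$ pointwise), but it is the same underlying argument.
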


\begin{proof}
Assertions \ref{fix123} and \ref{fix456} follow directly from Lemma \ref{lemma:p16}\ref{p123}. Assertion \ref{intersection16} follows from Corollary \ref{cor:T2}. Assertions \ref{triplesomegab} and \ref{tripleomegac} follow from the fact that  $b$ and~$c$ commute with each other which means that the element $b$ fixes the set of fixed points of $c$ and vice versa. Assertion \ref{p16cubicomega} follows from Lemma \ref{lemma:p16}\ref{p16cubic}.

\end{proof}

\begin{Remark}\label{remark:p16omega}
Assume that $V \simeq \mathbb{P}^2$ over a field $\mathbf{F}$ of characteristic different from~$3$ which contains a non-trivial cube root $\omega$ of unity. Then there is a biregular action of $\mathbb{Z}/3\mathbb{Z}$ on $V$ which was constructed in Remark \ref{remark:p16}. Let $b$ be a generator of this group. Consider the element 
$$
c=
\begin{pmatrix}
\omega & 0 & 0 \\
0  & 1 & 0\\
0 & 0 & \omega^2
\end{pmatrix}
\in \mathrm{PGL}_3(\mathbf{F}).
$$ 
\noindent  Together with $b$ it generates the subgroup $\widehat{T}=(\mathbb{Z}/3\mathbb{Z})^2$ in $\mathrm{Aut}(V).$ While the element $b$   fixes~$p_1$,~$p_2$ and $p_3$ from \eqref{eq:projsurf123} and cyclically permutes $p_4$,~$p_5$ and $p_6$ from~\eqref{eq:projsurf456},  the element $c$ on the contrary  fixes~$p_4$,~$p_5$ and $p_6$  and cyclically permutes~$p_1$,~$p_2$ and~$p_3$. In particular, the set $\{p_1, \ldots, p_6\}$ is $\widehat{T}$-invariant. It is straightforward to check that the blowup of $V$ at these $6$ points is a smooth cubic surface.

\end{Remark}

\section{Cubic surfaces}\label{sectionofcubic}
In this section we study cubic surfaces over a field of characteristic zero. First of all, we make the following observation.

\begin{Lemma}\label{lemma:cubic^3}
Let $\mathbf{F}$ be a field of  characteristic different from $3$ which does not contain a non-trivial cube root of unity. Let $S$ be a smooth cubic surface over $\mathbf{F}.$ Then the group~$(\mathbb{Z}/3\mathbb{Z})^3$ does not act biregularly  on $S.$
\end{Lemma}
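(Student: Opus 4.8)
The plan is to reduce the statement to a purely linear-algebraic obstruction over $\mathbf{F}$ by exploiting the action of $\mathrm{Aut}(S)$ on the cohomology (equivalently, the Picard lattice) of a smooth cubic surface. Recall that a smooth cubic surface $S$ is the blowup of $\mathbb{P}^2$ at six points, so $\mathrm{Pic}(S_{\overline{\mathbf{F}}})$ has rank $7$, and the anticanonical embedding realizes $S \subset \mathbb{P}^3$. Any biregular automorphism of $S$ over $\mathbf{F}$ preserves the hyperplane class $-K_S$ and acts linearly on $H^0(S, -K_S)$, which is $4$-dimensional over $\mathbf{F}$; this gives a homomorphism $\mathrm{Aut}(S) \to \mathrm{PGL}_4(\mathbf{F})$. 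The key point is that for a smooth cubic surface the anticanonical map is a closed embedding into $\mathbb{P}^3$, so $\mathrm{Aut}(S)$ acts \emph{faithfully} on $H^0(S,-K_S)$ through $\mathrm{PGL}_4(\mathbf{F})$ (an automorphism acting trivially on $\mathbb{P}^3$ would be the identity).

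With that setup, suppose for contradiction that $(\mathbb{Z}/3\mathbb{Z})^3$ acts biregularly on $S$ over $\mathbf{F}$. First I would lift this to a linear action: because $(\mathbb{Z}/3\mathbb{Z})^3$ is a $3$-group and $\mathrm{char}\,\mathbf{F} \neq 3$, the central extension
$$
1 \to \mathbf{F}^* \to \mathrm{GL}_4(\mathbf{F}) \to \mathrm{PGL}_4(\mathbf{F}) \to 1
$$
splits over the image of our group, or at least one can choose a lift whose order stays a power of $3$; concretely, since each generator has order $3$ in $\mathrm{PGL}_4$, one may scale a preimage in $\mathrm{GL}_4(\overline{\mathbf{F}})$ to make its cube the identity, and the obstruction to doing this over $\mathbf{F}$ and to having the lifted generators commute is measured by $\mu_3(\mathbf{F})$. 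The hypothesis that $\mathbf{F}$ contains \emph{no} non-trivial cube root of unity is exactly what forces these obstructions to vanish (any commutator $\bar{x}\bar{y}\bar{x}^{-1}\bar{y}^{-1}$ would be a scalar cube root of unity, hence trivial, by the same mechanism as in Lemma~\ref{lemma:csaomega}), so the whole group $(\mathbb{Z}/3\mathbb{Z})^3$ lifts to a genuine subgroup of $\mathrm{GL}_4(\mathbf{F})$.

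This is precisely the configuration forbidden by Lemma~\ref{lemma:representationz/3z}, which asserts $(\mathbb{Z}/3\mathbb{Z})^3 \not\subset \mathrm{GL}_4(\mathbf{F})$ when $\mathbf{F}$ has no non-trivial cube root of unity. Thus the existence of the biregular action yields a contradiction, and no such action exists. The main obstacle I anticipate is the lifting step: one must argue carefully that the abelian group $(\mathbb{Z}/3\mathbb{Z})^3 \subset \mathrm{PGL}_4(\mathbf{F})$ lifts to an \emph{isomorphic} copy inside $\mathrm{GL}_4(\mathbf{F})$ rather than to some non-split central extension, and that the lift can be taken over the ground field $\mathbf{F}$ rather than merely over $\overline{\mathbf{F}}$. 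Both issues are controlled by the triviality of $\mu_3(\mathbf{F})$: the scalar $a$ appearing in any relation $\bar{x}\bar{y} = a\,\bar{y}\bar{x}$ satisfies $a^3 = 1$, so the absence of non-trivial cube roots of unity forces $a = 1$ and kills the commutator obstruction, while the same constraint kills the cohomological obstruction to a field-rational lift. Once the lift is in hand, the conclusion is immediate from Lemma~\ref{lemma:representationz/3z}.
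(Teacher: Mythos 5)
Your overall strategy --- realizing the group inside $\mathrm{GL}_4(\mathbf{F})$ via the anticanonical space and contradicting Lemma~\ref{lemma:representationz/3z} --- is the same as the paper's, but your reduction has a genuine gap at the lifting step. If $x \in \mathrm{PGL}_4(\mathbf{F})$ has order $3$ and $\bar{x} \in \mathrm{GL}_4(\mathbf{F})$ is a preimage, then $\bar{x}^3 = aI$ for some $a \in \mathbf{F}^*$, and rescaling $\bar{x}$ by $\lambda \in \mathbf{F}^*$ replaces $a$ by $\lambda^3 a$. So the obstruction to producing an order-$3$ lift over $\mathbf{F}$ is the class of $a$ in $\mathbf{F}^*/(\mathbf{F}^*)^3$, not an element of $\mu_3(\mathbf{F})$; these are different groups, and the hypothesis that $\mathbf{F}$ contains no non-trivial cube root of unity says nothing about the former (for $\mathbf{F} = \mathbb{Q}$, the paper's headline case, $\mu_3(\mathbb{Q})$ is trivial while $\mathbb{Q}^*/(\mathbb{Q}^*)^3$ is infinite). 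Your commutator computation is correct --- the scalar in $\bar{x}\bar{y} = a\,\bar{y}\bar{x}$ does satisfy $a^3 = 1$ and hence $a=1$ --- but that only shows the preimage of $(\mathbb{Z}/3\mathbb{Z})^3$ in $\mathrm{GL}_4(\mathbf{F})$ is abelian; an abelian extension of $(\mathbb{Z}/3\mathbb{Z})^3$ by $\mathbf{F}^*$ need not contain a subgroup mapping isomorphically onto $(\mathbb{Z}/3\mathbb{Z})^3$, since $\mathrm{Ext}^1\bigl((\mathbb{Z}/3\mathbb{Z})^3, \mathbf{F}^*\bigr) \simeq \bigl(\mathbf{F}^*/(\mathbf{F}^*)^3\bigr)^3$ is non-trivial over $\mathbb{Q}$. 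So the claim that ``the same constraint kills the cohomological obstruction to a field-rational lift'' is unjustified, and false as a general principle.

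The repair is to avoid the lift altogether, which is what the paper does: an automorphism $\phi$ of $S$ acts on $H^0(S,\omega_S^{-1})$ by a \emph{canonical} linear map, because $\phi^*\omega_S \simeq \omega_S$ functorially via the differential; this gives a genuine group homomorphism $\mathrm{Aut}(S) \to \mathrm{GL}\bigl(H^0(S,-K_S)\bigr) \simeq \mathrm{GL}_4(\mathbf{F})$ with $(\phi^*)^3 = (\phi^3)^* = \mathrm{id}$ automatically, and it is injective because the anticanonical map is a closed embedding (a point you correctly note). The contradiction with Lemma~\ref{lemma:representationz/3z} is then immediate. In other words, the obstruction class you need to vanish does vanish, but for a geometric reason --- the intrinsic linearization of $-K_S$ --- rather than the arithmetic one you offer.
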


\begin{proof}
Assume that there is an action of $(\mathbb{Z}/3\mathbb{Z})^3$ on $S.$ Then we get an induced action of this group on $\mathbb{P}^3.$ Also the action of $(\mathbb{Z}/3\mathbb{Z})^3$ induces a linear action on 
$$
H^0(S,-K_S) \simeq \mathbf{F}^4.
$$
\noindent  However, by Lemma \ref{lemma:representationz/3z} the group $(\mathbb{Z}/3\mathbb{Z})^3$ does not act linearly on $\mathbf{F}^4.$

\end{proof}

The following lemma states that any map between smooth cubic surfaces is defined uniquely by the image of pairwise skew lines  $E_1, \ldots, E_6.$

\begin{Lemma}\label{lemma:uniquenessaut}
Let $\mathbf{F}$ be an algebraically closed field. Let $S \subset \mathbb{P}^3$ be a smooth cubic surface with pairwise skew lines $E_1, \ldots, E_6.$  Assume that there is an element~\mbox{$\theta \in \mathrm{PGL}_4(\mathbf{F})$} such that $\theta(E_i)=E_i$ for all $i.$ Then $\theta=id.$
\end{Lemma}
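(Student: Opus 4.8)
The plan is to reduce everything to the geometry of a blow-down $S\to\mathbb{P}^2$ and to the triviality of the stabiliser of a projective frame. The key intermediate claim is that $\theta$ \emph{preserves} $S$; once that is established, $\theta|_S$ is a genuine automorphism of $S$ fixing each of the six skew lines, which I can descend to $\mathbb{P}^2$. First I would use the classical fact that over the algebraically closed field $\mathbf{F}$ the six pairwise skew lines $E_1,\dots,E_6$ on the smooth cubic surface $S$ can be contracted simultaneously: there is a birational morphism $\sigma\colon S\to\mathbb{P}^2$ whose exceptional curves are exactly $E_1,\dots,E_6$, sending them to six points $q_1,\dots,q_6$ in general position (no three collinear, not all on a conic).

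Granting $\theta(S)=S$, the restriction $g:=\theta|_S$ lies in $\mathrm{Aut}(S)$ and satisfies $g(E_i)=E_i$ for all $i$; since $g$ preserves the exceptional locus of $\sigma$ it descends to an element $\bar{g}\in\mathrm{PGL}_3(\mathbf{F})$ with $\sigma\circ g=\bar{g}\circ\sigma$ and $\bar{g}(q_i)=q_i$ for every $i$. Four of the points $q_i$ form a projective frame, and the stabiliser of a frame in $\mathrm{PGL}_3(\mathbf{F})$ is trivial, so $\bar{g}=\mathrm{id}$. Then $g$ is an automorphism of $S$ lying over $\mathrm{id}_{\mathbb{P}^2}$, hence it is the identity on the dense open subset $S\setminus\bigcup_i E_i\cong\mathbb{P}^2\setminus\{q_1,\dots,q_6\}$, and therefore $g=\mathrm{id}_S$. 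Finally $\theta$ fixes $S$ pointwise; the fixed locus of a projective transformation is a union of linear subspaces, and since it contains the non-degenerate surface $S$ it must be all of $\mathbb{P}^3$, whence $\theta=\mathrm{id}$.

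The main obstacle is the claim $\theta(S)=S$, which I would deduce from the statement that \emph{the six skew lines lie on a unique cubic surface}: since $\theta(E_i)=E_i\subset S$, the surface $\theta(S)$ is a cubic through $E_1,\dots,E_6$, so uniqueness forces $\theta(S)=S$. To prove uniqueness, suppose a cubic form $F'$ vanishes on all $E_i$ but is not proportional to the equation $F$ of $S$. Restricting to $S$ and using $\mathcal{O}_S(1)\cong -K_S$, the divisor $\operatorname{div}(F'|_S)\in\lvert -3K_S\rvert$ contains each $E_i$ with some multiplicity $m_i\geq 1$, so $\operatorname{div}(F'|_S)=\sum_i m_iE_i+R$ with $R$ effective and, in $\mathrm{Pic}(S)=\langle\ell,e_1,\dots,e_6\rangle$ (where $e_i=[E_i]$ and $-K_S=3\ell-\sum_i e_i$), of class $[R]=9\ell-\sum_i(3+m_i)e_i$. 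A direct computation gives $R\cdot(-K_S)=9-\sum_i m_i$ and $R^2=81-\sum_i(3+m_i)^2\leq -15$. The class equation shows $R\neq 0$, and since $S$ is a del Pezzo surface $-K_S$ is ample, so $1\leq R\cdot(-K_S)$, forcing $\sum_i m_i\leq 8$. Writing $R=\sum_j a_jC_j$ with $a_j\geq 1$ and $C_j$ distinct irreducible curves, ampleness gives $C_j\cdot(-K_S)\geq 1$ and hence $\sum_j a_j\leq R\cdot(-K_S)\leq 3$; moreover every irreducible curve on a del Pezzo surface satisfies $C_j^2\geq -1$ by adjunction, so
$$
R^2\;\geq\;\sum_j a_j^2\,C_j^2\;\geq\;-\sum_j a_j^2\;\geq\;-\Bigl(\sum_j a_j\Bigr)^2\;\geq\;-9,
$$
contradicting $R^2\leq -15$. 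Thus $F'$ is proportional to $F$, the cubic through the six lines is unique, and $\theta(S)=S$. This intersection-theoretic bound on the del Pezzo surface is the heart of the argument; the blow-down and frame computations are then routine.
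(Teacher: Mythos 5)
Your proof is correct, and while the first half (reduce to $\mathbb{P}^2$ via the blow-down and use triviality of the stabiliser of a projective frame, then observe that the fixed locus of $\theta$ is a union of linear subspaces) coincides with the paper's argument, your treatment of the key step $\theta(S)=S$ is genuinely different. The paper argues by contradiction directly on the pair $S,\,\theta(S)$: the intersection $S\cdot\theta(S)$ is a curve of degree $9$ containing $E_1,\ldots,E_6$, and for each $i$ the line $Q_i$ meeting the five lines $E_j$, $j\neq i$, has at least five points on $\theta(S)$ and hence lies on it, so the degree-$9$ curve would contain $12$ distinct lines --- a Bezout-type contradiction. You instead prove the stronger statement that $E_1,\ldots,E_6$ lie on a \emph{unique} cubic surface, by restricting a putative second cubic form to $S$, writing its divisor as $\sum m_iE_i+R$ in $\lvert-3K_S\rvert$, and playing the bound $R^2\leqslant-15$ against $R^2\geqslant-\bigl(R\cdot(-K_S)\bigr)^2\geqslant-9$, which uses ampleness of $-K_S$, adjunction ($C^2\geqslant-1$ for irreducible curves on a del Pezzo surface), and non-negativity of intersections of distinct irreducible curves; I checked the numerics and they are right. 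Your route costs a little more machinery (intersection theory on the del Pezzo surface rather than a degree count in $\mathbb{P}^3$) but buys a cleaner, reusable uniqueness statement from which $\theta(S)=S$ falls out immediately; the paper's route is more elementary but is tailored to the contradiction at hand.
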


\begin{proof}
First of all, assume that $\theta$ preserves $S.$ Then we can blow down $E_1, \ldots, E_6$ and get the induced automorphism $\theta$ on $\mathbb{P}^2$ with $6$ fixed points in general position. Therefore, $\theta$ acts trivially on $\mathbb{P}^2$ and so on $S$ and $\mathbb{P}^3.$

Now assume that $\theta(S) \neq S.$ Denote by $S'$ the image of cubic surface $\theta(S).$ The intersection $S \cdot S'$ of these two cubic surfaces  in $\mathbb{P}^3$ is a possibly non-reduced curve of degree $9.$ Our~$6$ lines $E_1, \ldots, E_6$ are contained in this curve. It is well-known that for any $5$ lines among $E_1, \ldots, E_6$  there is a unique line on $S$ which intersects all these $5$ lines. This line is a strict transform of the conic through $5$ points in general position which is unique. Any of these lines lies in $S'$ because it has at least~$5$ common points with $S'.$ And so we get that the curve of degree $9$ contains~$12$ lines, which is a contradiction.

\end{proof}

Let $S$ be a smooth  cubic surface  over a field $\mathbf{F}.$ Then there is a natural action of the Weyl group $W(\mathrm{E}_6)$ on the Picard group  $\mathrm{Pic}(S_{\overline{\mathbf{F}}})$ of $S_{\overline{\mathbf{F}}}$  (see, for example,~\mbox{\cite[Corollary~25.1.1]{Manin}).} Namely, the group $W(\mathrm{E}_6)$ consists of all  automorphisms of the lattice~\mbox{$\mathrm{Pic}(S_{\overline{\mathbf{F}}}) \simeq \mathbb{Z}^7$} preserving the intersection form and fixing the canonical class~$K_S.$ In particular, for every choice of $6$ pairwise skew lines $E_1, \ldots, E_6$ on $S_{\overline{\mathbf{F}}}$ there is an action of the symmetric group $\mathrm{S}_6$ on the Picard group $\mathrm{Pic}(S_{\overline{\mathbf{F}}})$ which permutes the classes of these $6$ lines. It is well-known that the automorphism group of $S$ is embedded in the Weyl group~$W(\mathrm{E}_6)$ (see~\cite[Corollary~8.2.40]{DolgachevAlg}). Note also that the Galois group $\mathrm{Gal}(\overline{\mathbf{F}}/\mathbf{F})$ maps to $W(\mathrm{E}_6).$

\begin{Lemma}\label{lemma:alphagal}
Let $S$ be a smooth cubic surface over a field $\mathbf{F}.$  Let $\phi \in \mathrm{Aut}(S_{\overline{\mathbf{F}}})$ be an automorphism of $S_{\overline{\mathbf{F}}}$ which  commutes with the image of the Galois group $\mathrm{Gal}(\overline{\mathbf{F}}/\mathbf{F})$ in~$W(\mathrm{E}_6).$ Then the automorphism $\phi$ is defined over $\mathbf{F},$ i.e.~\mbox{$\phi \in \mathrm{Aut}(S).$}
\end{Lemma}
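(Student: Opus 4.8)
The plan is to use Galois descent: an automorphism $\phi \in \mathrm{Aut}(S_{\overline{\mathbf{F}}})$ descends to an automorphism of $S$ over $\mathbf{F}$ precisely when it is Galois-invariant, that is, when $g \circ \phi \circ g^{-1} = \phi$ for every $g \in \mathrm{Gal}(\overline{\mathbf{F}}/\mathbf{F})$, where $g$ acts on $S_{\overline{\mathbf{F}}}$ through the second factor. So the goal reduces to verifying this equality of geometric automorphisms, and I would like to detect it on the level of the Picard lattice, where the hypothesis lives.

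First I would record that the induced representation $\rho \colon \mathrm{Aut}(S_{\overline{\mathbf{F}}}) \to W(\mathrm{E}_6)$ on $\mathrm{Pic}(S_{\overline{\mathbf{F}}})$ is faithful. Indeed, since $-K_S$ is very ample and realizes the given embedding $S \hookrightarrow \mathbb{P}^3$, every automorphism of $S_{\overline{\mathbf{F}}}$ is induced by an element of $\mathrm{PGL}_4(\overline{\mathbf{F}})$ preserving $S_{\overline{\mathbf{F}}}$. If such an automorphism acts trivially on $\mathrm{Pic}(S_{\overline{\mathbf{F}}})$, then it fixes the classes of all $27$ lines; as distinct lines have distinct classes, it fixes each line, in particular the six pairwise skew lines $E_1, \ldots, E_6$, and hence is the identity by Lemma~\ref{lemma:uniquenessaut}. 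This faithfulness is the key point that lets me pass information back and forth between the surface and its lattice.

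Next, I would fix $g \in \mathrm{Gal}(\overline{\mathbf{F}}/\mathbf{F})$ and consider the conjugate $\phi^{g} = g \circ \phi \circ g^{-1} \in \mathrm{Aut}(S_{\overline{\mathbf{F}}})$. By functoriality of the action on the Picard group, its image under $\rho$ is $\gamma(g)\,\rho(\phi)\,\gamma(g)^{-1}$, where $\gamma \colon \mathrm{Gal}(\overline{\mathbf{F}}/\mathbf{F}) \to W(\mathrm{E}_6)$ denotes the Galois representation on $\mathrm{Pic}(S_{\overline{\mathbf{F}}})$. By the commuting hypothesis $\rho(\phi)$ commutes with $\gamma(g)$, whence $\rho(\phi^{g}) = \rho(\phi)$. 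Since $\rho$ is faithful, I conclude $\phi^{g} = \phi$. As this holds for every $g \in \mathrm{Gal}(\overline{\mathbf{F}}/\mathbf{F})$, the automorphism $\phi$ is Galois-invariant and therefore lies in $\mathrm{Aut}(S)$.

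The one genuinely delicate point is the bookkeeping in the previous step: confirming that conjugation by $g$ on the surface corresponds exactly to conjugation by $\gamma(g)$ on the lattice, so that ``commuting in $W(\mathrm{E}_6)$'' is precisely the right hypothesis to force $\phi^{g}$ and $\phi$ to share an image under $\rho$. Everything else—the faithfulness of $\rho$ and the descent criterion—is standard once Lemma~\ref{lemma:uniquenessaut} is available.
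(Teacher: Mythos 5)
Your proof is correct, but it takes a genuinely different route from the paper's. You argue by pure Galois descent: the representation $\rho\colon \mathrm{Aut}(S_{\overline{\mathbf{F}}})\to W(\mathrm{E}_6)$ is faithful (which you deduce from Lemma~\ref{lemma:uniquenessaut}, and which the paper also quotes as well known), the image of the Galois conjugate $\phi^{g}$ under $\rho$ is $\gamma(g)\rho(\phi)\gamma(g)^{-1}=\rho(\phi)$ by the commuting hypothesis, hence $\phi^{g}=\phi$ for all $g$, and a Galois-equivariant morphism between varieties over a perfect field descends. The functoriality step you flag as delicate is indeed just the standard identity $(g\circ\phi\circ g^{-1})^{*}=(g^{-1})^{*}\circ\phi^{*}\circ g^{*}$, so there is no gap there; note also that the paper's blanket assumption that all fields are perfect is what licenses the descent criterion you invoke. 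The paper instead avoids citing descent as a black box: it applies its Lemma~\ref{lemma:irreducibleGalois} to the union $\mathcal{E}$ of all $27$ lines to \emph{construct} an element $\psi\in\mathrm{PGL}_4(\mathbf{F})$ (as an $\mathbf{F}$-point of a Galois-invariant linear section of $\mathrm{PGL}_4$) inducing the same permutation of the lines as $\phi$, and then uses the rigidity Lemma~\ref{lemma:uniquenessaut} to conclude $\phi=\psi$. Both arguments consume the hypothesis in the same way --- only through the fact that $\phi$'s permutation of the $27$ lines commutes with the Galois action --- and both ultimately rest on Lemma~\ref{lemma:uniquenessaut}; yours is shorter and more conceptual at the price of invoking general descent, while the paper's is more self-contained and makes the rational representative explicit.
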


\begin{proof}
Denote by $E_i$ for $1 \leqslant i \leqslant 27$ all lines lying on $S_{\overline{\mathbf{F}}}.$ Then the curve 
$$
\mathcal{E}=E_1+\ldots+E_{27}
$$
\noindent is Galois-invariant, thus, is defined over $\mathbf{F}.$  Applying Lemma~\ref{lemma:irreducibleGalois} to the curve $\mathcal{E}$ and the element~\mbox{$\phi \in \mathrm{PGL}_4(\overline{\mathbf{F}})$} we obtain $\psi \in \mathrm{PGL}_4(\mathbf{F})$ such that $\phi(E_i)=\psi(E_i)$ for all~\mbox{$1 \leqslant i \leqslant 27.$}  Hence, applying  Lemma \ref{lemma:uniquenessaut} to the element $\phi \circ \psi^{-1}$ we get $\phi=\psi.$ Thus, we have $\phi \in \mathrm{Aut}(S).$

\end{proof}

We are mostly interested in two conjugacy classes of elements in $W(\mathrm{E}_6),$ which are conjugacy classes of type $\mathrm{A}_2$ and $\mathrm{A}_2^2$ in the notation of~\cite{Carter}. They consist of the elements whose eigenvalues on the vector space corresponding  to the root system~$\mathrm{E}_6$  are
 $$
\omega, \omega^2,1,1,1,1
$$
\noindent and
 $$
\omega, \omega, \omega^2, \omega^2,1,1,
$$
\noindent respectively; here $\omega$ is a non-trivial cube root of unity.  We will say that  an automorphism of a smooth cubic surface is of type $\mathrm{A}_2$  (of type $\mathrm{A}_2^2$), if its image in  the Picard group is an element in the conjugacy class of type $\mathrm{A}_2$ (of type $\mathrm{A}_2^2$).

\begin{Example}\label{exampleA2}
Let $S$ be a smooth cubic surface. Let $E_1, \ldots, E_6$ be pairwise skew geometric lines on $S.$ Let us consider the element~$(456)$ in $W(\mathrm{E}_6)$ which cyclically permutes $E_4$, $E_5$ and $E_6$ and fixes  $E_1$, $E_2$ and $E_3.$ Then by~\mbox{\cite[Table 1]{RybakovTrepalin}}  this element is of type $\mathrm{A}_2.$
\end{Example}

Now let us discuss the automorphism group of the Fermat cubic surface, i.e. the cubic surface which is defined by the equation $x^3+y^3+z^3+t^3=0.$

\begin{Example}\label{exampleA2A22}
Let $S$ be the Fermat cubic surface over an algebraically closed field~$\mathbf{F}$ of characteristic different from $3.$  Then by~\cite[Lemma 2.4 and Table 1]{RybakovTrepalin}  an element in~$\mathrm{Aut}(S)$ is of type~$\mathrm{A}_2$ if and only if its quadruple of eigenvalues in~$\mathrm{PGL}_4(\mathbf{F}),$ up to multiplication by a non-zero element in the field $\mathbf{F},$ is of the form $1,1,\omega,\omega,$ where~$\omega$ is a non-trivial cube root of unity. An element in $\mathrm{Aut}(S)$ is of type $\mathrm{A}_2^2$ if and only if its quadruple of eigenvalues in $\mathrm{PGL}_4(\mathbf{F}),$ up to multiplication by a non-zero element in the field~$\mathbf{F},$ is of the form $1,1,\omega,\omega^2.$

\end{Example}

\begin{Th}[{\cite[Lemma 10.14]{DolgachevDuncan} and~\cite[Theorem~6.10]{DI}}]\label{thofDolg}
Let $S$ be a smooth cubic surface over an algebraically closed field of characteristic different from $3$ admitting an automorphism of type $\mathrm{A}_2.$ Then $S$ is isomorphic to the Fermat cubic. 

\end{Th}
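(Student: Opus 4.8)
The plan is to reduce the statement to an explicit configuration of six points in $\mathbb{P}^2$ by blowing down a well-chosen sixer. Write $\sigma$ for the automorphism of type $\mathrm{A}_2$ and recall that it has order $3$. First I would use that $\mathrm{A}_2$ is a single conjugacy class of $W(\mathrm{E}_6)$ containing the permutation $(456)$ of Example~\ref{exampleA2}, together with the standard fact that $W(\mathrm{E}_6)$ acts simply transitively on the set of markings (ordered sixers) of $S$. This lets me choose six pairwise skew geometric lines $E_1, \ldots, E_6$ on $S$ with respect to which the image of $\sigma$ in $W(\mathrm{E}_6)$ is exactly $(456)$. Since the $27$ lines of $S$ have pairwise distinct classes in $\mathrm{Pic}(S)$, an automorphism permuting their classes permutes the lines themselves, so $\sigma$ fixes the curves $E_1, E_2, E_3$ and cyclically permutes $E_4, E_5, E_6$.

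Next I would contract the sixer. Let $\pi\colon S \to \mathbb{P}^2$ be the blow-down of $E_1, \ldots, E_6$ to six points $q_1, \ldots, q_6$ in general position (no three collinear, not all six on a conic). Because $\sigma$ preserves the set $\{E_1, \ldots, E_6\}$, it descends to an automorphism $\bar\sigma \in \mathrm{PGL}_3(\mathbf{F})$ with $\pi \circ \sigma = \bar\sigma \circ \pi$, fixing $q_1, q_2, q_3$ and cyclically permuting $q_4, q_5, q_6$. As $\bar\sigma$ has order $3$ and fixes the three non-collinear points $q_1, q_2, q_3$, it is diagonal in the coordinate system with vertices $q_1, q_2, q_3$; a repeated eigenvalue would force $\bar\sigma$ to fix a line pointwise, and then the orbit $\{q_4, q_5, q_6\}$ would lie on a line through a fixed point, contradicting general position. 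Hence, after scaling, $\bar\sigma = \mathrm{diag}(1, \omega, \omega^2)$ with $\omega$ a non-trivial cube root of unity. Normalizing $q_4 = [1:1:1]$ (its coordinates are nonzero, again by general position) forces $q_5 = [1:\omega:\omega^2]$ and $q_6 = [1:\omega^2:\omega]$. Thus $S$ is the blow-up of $\mathbb{P}^2$ at the six points of Remark~\ref{remark:p16}, up to the action of $\mathrm{PGL}_3(\mathbf{F})$ on the configuration.

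To finish I would identify this blow-up with the Fermat cubic. The cleanest route avoids writing the anticanonical equation: the argument above used only that $S$ carries an automorphism of type $\mathrm{A}_2$, so it shows that such a surface is unique up to isomorphism, the six-point configuration being determined up to $\mathrm{PGL}_3(\mathbf{F})$ and blow-ups at projectively equivalent point sets being isomorphic. Since the Fermat cubic admits the automorphism $[x:y:z:t] \mapsto [x:y:\omega z:\omega t]$, which is of type $\mathrm{A}_2$ by Example~\ref{exampleA2A22}, it is itself a smooth cubic surface carrying an $\mathrm{A}_2$ automorphism, and is therefore isomorphic to $S$. Alternatively, one can compute the linear system of cubics through $q_1, \ldots, q_6$ directly and read off the Fermat equation.

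I expect the main obstacle to be the first reduction, namely passing from the purely lattice-theoretic datum ``conjugacy class $\mathrm{A}_2$ in $W(\mathrm{E}_6)$'' to an explicit geometric permutation of six skew lines realized by an actual sixer: here the simple transitivity of $W(\mathrm{E}_6)$ on ordered sixers and the distinctness of the classes of the $27$ lines are what make the identification rigorous. The remaining steps are routine, the only subtlety being the general-position arguments used to exclude a repeated eigenvalue of $\bar\sigma$ and to normalize the orbit $\{q_4,q_5,q_6\}$.
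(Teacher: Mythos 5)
Your proof is correct. There is, however, no internal argument in the paper to compare it with: Theorem~\ref{thofDolg} is imported wholesale from \cite[Lemma 10.14]{DolgachevDuncan} and \cite[Theorem 6.10]{DI}. Those proofs work in the anticanonical model: an order-$3$ automorphism of type $\mathrm{A}_2$ linearizes on $H^0(S,-K_S)\simeq\mathbf{F}^4$ with eigenvalues $1,1,\omega,\omega$, the invariant cubic form is then forced into the shape $f_3(x,y)+g_3(z,t)$ with $f_3,g_3$ binary cubics without repeated roots, and any such form is projectively equivalent to the Fermat form. Your route goes through the plane model instead: using that the $\mathrm{A}_2$ class is the class of the permutation $(456)$ of Example~\ref{exampleA2} and that $W(\mathrm{E}_6)$ acts simply transitively on blow-down structures, you realize $\sigma$ as $(456)$ on an actual sixer, descend to $\mathrm{PGL}_3(\mathbf{F})$, and pin the six blown-up points down to the configuration of Remark~\ref{remark:p16}, concluding by uniqueness since the Fermat cubic also carries an $\mathrm{A}_2$ automorphism. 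The individual steps all check out: the passage from classes to actual lines via distinctness of the classes of the $27$ lines, the exclusion of a repeated eigenvalue of $\bar\sigma$ because the orbit $\{q_4,q_5,q_6\}$ would then be collinear, and the final identification via Example~\ref{exampleA2A22} (which is an independent eigenvalue computation on the Fermat surface, so there is no circularity). What your approach buys is an argument that stays entirely in the lattice-and-configuration language already set up in Section~4 and avoids normalizing cubic forms; what it costs is the extra input that every marking of a cubic surface is geometric, i.e., the $W(\mathrm{E}_6)$-torsor structure on ordered sixers, which the anticanonical-model proof does not need.
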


\begin{Cor}\label{lemma:Farmatcubic}
Let $S$ be a smooth cubic surface over an algebraically closed field of characteristic different from $3.$ Let $E_1,\ldots, E_6$ be pairwise skew lines on $S.$ Assume that there is a biregular action of the group $T\simeq \mathbb{Z}/3\mathbb{Z}$ on $S$ which fixes $E_1$, $E_2$ and~$E_3$ and cyclically permutes $E_4$, $E_5$ and $E_6$. Then $S$ is isomorphic to the Fermat cubic.

\end{Cor}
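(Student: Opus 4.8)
The plan is to recognize the generator of $T$ as the element $(456) \in W(\mathrm{E}_6)$ studied in Example~\ref{exampleA2} and then invoke Theorem~\ref{thofDolg}, which identifies any cubic surface carrying an automorphism of type $\mathrm{A}_2$ with the Fermat cubic.

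First I would fix a generator $g$ of $T \simeq \mathbb{Z}/3\mathbb{Z}$ and consider its image $\bar{g}$ under the embedding $\mathrm{Aut}(S) \hookrightarrow W(\mathrm{E}_6)$ acting on $\mathrm{Pic}(S)$. Since $E_1, \ldots, E_6$ are pairwise skew, contracting them exhibits $S$ as the blowup of $\mathbb{P}^2$ at six points, so the class $\ell$ (the pullback of a line) together with $[E_1], \ldots, [E_6]$ forms a basis of $\mathrm{Pic}(S)$, and one has $-K_S = 3\ell - \sum_{i=1}^6 [E_i]$. By hypothesis $g$ fixes $[E_1], [E_2], [E_3]$ and cyclically permutes $[E_4], [E_5], [E_6]$; in particular it fixes the sum $\sum_{i=1}^6 [E_i]$. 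Because any automorphism fixes the canonical class, $\bar{g}$ then also fixes $\ell = \frac{1}{3}\bigl(-K_S + \sum_{i=1}^6 [E_i]\bigr)$.

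Hence $\bar{g}$ acts on the entire basis $\{\ell, [E_1], \ldots, [E_6]\}$ exactly as the permutation $(456) \in \mathrm{S}_6 \subset W(\mathrm{E}_6)$ appearing in Example~\ref{exampleA2}, and is therefore equal to it. By that example $(456)$ lies in the conjugacy class of type $\mathrm{A}_2$, so $g$ is an automorphism of type $\mathrm{A}_2$ in the sense fixed above. Theorem~\ref{thofDolg} then immediately yields that $S$ is isomorphic to the Fermat cubic.

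The only point that requires care — and the main, though minor, obstacle — is verifying that the prescribed permutation of the six skew-line classes pins down $\bar{g}$ as precisely the element $(456)$ of $W(\mathrm{E}_6)$, rather than merely some element inducing that permutation on the $E_i$. This is settled by the short lattice computation above showing that $\ell$ is fixed, after which the type is read off directly from Example~\ref{exampleA2} and the conclusion follows from Theorem~\ref{thofDolg}.
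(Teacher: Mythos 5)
Your proposal is correct and follows essentially the same route as the paper: identify the generator of $T$ with the permutation $(456)\in \mathrm{S}_6\subset W(\mathrm{E}_6)$, read off from Example~\ref{exampleA2} that it has type $\mathrm{A}_2$, and conclude via Theorem~\ref{thofDolg}. The extra lattice check that $\ell$ is fixed (so the element is literally $(456)$, not just some element inducing that permutation on the $E_i$) is a sensible precaution that the paper leaves implicit.
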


\begin{proof}
Consider the subgroup $\mathrm{S}_6 \subset W(\mathrm{E}_6)$ acting on $E_1, \ldots, E_6$ by permutations. Then the image of the group $T$ in $W(\mathrm{E}_6)$ is generated by the element $(456) \in \mathrm{S}_6.$  By Example~\ref{exampleA2} the conjugacy class of the element $(456)$ has type $\mathrm{A}_2.$  Therefore, by Theorem~\ref{thofDolg} the cubic surface $S$ is isomorphic to the Fermat cubic.

\end{proof}

\begin{Lemma}\label{lemma:a2inaut}
Let $S$ be the Fermat cubic surface over an algebraically closed field of characteristic different from $2$ and $3.$ Then there are exactly $6$ elements in $\mathrm{Aut}(S)$ of type~$\mathrm{A}_2$ and they commute with each other. Moreover, the centralizer of any element of type $\mathrm{A}_2$ in~$\mathrm{Aut}(S)$ is isomorphic to $(\mathbb{Z}/3\mathbb{Z})^3 \rtimes (\mathbb{Z}/2\mathbb{Z})^2.$
\end{Lemma}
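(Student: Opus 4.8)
The plan is to combine the explicit description of $\mathrm{Aut}(S)$ for the Fermat cubic with the eigenvalue characterization of type $\mathrm{A}_2$ elements in Example~\ref{exampleA2A22}. Under our hypotheses ($\mathrm{char}\,\mathbf{F}\neq 2,3$) it is classical (see, e.g.,~\cite{DolgachevAlg}) that $\mathrm{Aut}(S)\simeq N\rtimes\mathrm{S}_4$, where $N\simeq(\mathbb{Z}/3\mathbb{Z})^3$ is the image in $\mathrm{PGL}_4$ of the diagonal matrices $\mathrm{diag}(\omega^{a_1},\omega^{a_2},\omega^{a_3},\omega^{a_4})$ scaling the coordinates by cube roots of unity, and $\mathrm{S}_4$ permutes the four coordinates; in particular $|\mathrm{Aut}(S)|=27\cdot 24=648$. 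By Example~\ref{exampleA2A22}, an element is of type $\mathrm{A}_2$ if and only if its four eigenvalues, up to a common scalar, are $1,1,\omega,\omega$; being of finite order prime to the characteristic, such an element is diagonalizable of order exactly $3$.

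First I would count the type $\mathrm{A}_2$ elements. Since a type $\mathrm{A}_2$ element $g$ has order $3$, its image under the projection $\mathrm{Aut}(S)\to\mathrm{S}_4$ has order dividing $3$, hence is trivial or a $3$-cycle. If the image is a $3$-cycle, then on the three non-fixed coordinates $g$ is a monomial matrix whose cube is scalar, so its eigenvalues there are the three \emph{distinct} cube roots $\mu,\mu\omega,\mu\omega^2$ of one scalar (distinctness uses $\mathrm{char}\neq 3$); together with the eigenvalue on the fixed coordinate this yields at least three distinct eigenvalues, incompatible with the pattern $1,1,\omega,\omega$, which has only two. Hence $g$ lies in $N$. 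Writing $g=\mathrm{diag}(\omega^{a_1},\dots,\omega^{a_4})$, the eigenvalue condition says exactly that the multiset $\{a_1,\dots,a_4\}$ equals $\{v,v,v+1,v+1\}$ in $\mathbb{Z}/3\mathbb{Z}$ for some $v$. Modulo the scalar ambiguity (a common shift of all $a_i$) the set of the two coordinates carrying the smaller exponent is shift-invariant and determines $g$, so such elements correspond bijectively to the $2$-element subsets of $\{1,2,3,4\}$, giving exactly $\binom{4}{2}=6$ elements. All six lie in the abelian group $N$, hence commute with one another, settling the first two assertions.

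For the centralizer, note that $\mathrm{S}_4$ acts on the six type $\mathrm{A}_2$ elements through its transitive action on the $2$-element subsets of $\{1,2,3,4\}$; thus all six are conjugate in $\mathrm{Aut}(S)$ and it suffices to treat $g_0=\mathrm{diag}(1,1,\omega,\omega)$. As $N$ is abelian and contains $g_0$, we have $N\subset C(g_0)$. For $\sigma\in\mathrm{S}_4$ and diagonal $D$, the element $D\sigma$ commutes with $g_0$ if and only if $\sigma$ does, because $D$ and $\sigma g_0\sigma^{-1}$ are both diagonal; and $\sigma g_0\sigma^{-1}=g_0$ in $\mathrm{PGL}_4$ forces $\sigma$ to preserve, \emph{without interchanging}, the pairs $\{1,2\}$ and $\{3,4\}$. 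Hence the commuting permutations form $\langle(12),(34)\rangle\simeq(\mathbb{Z}/2\mathbb{Z})^2$, and $C(g_0)=N\rtimes(\mathbb{Z}/2\mathbb{Z})^2\simeq(\mathbb{Z}/3\mathbb{Z})^3\rtimes(\mathbb{Z}/2\mathbb{Z})^2$, of order $108=648/6$, consistent with the orbit--stabilizer count for a conjugacy class of size $6$.

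The main obstacle I anticipate is the enumeration: one must rule out that any \emph{non-diagonal} automorphism is of type $\mathrm{A}_2$. Reducing to order-$3$ elements, and hence to trivial or $3$-cycle permutation part, makes this clean, the crucial input being that a $3$-cycle contributes three \emph{distinct} cube-root eigenvalues. A second delicate point, in the centralizer computation, is to distinguish the centralizer from the normalizer of $\langle g_0\rangle$: the double transposition $(13)(24)$ interchanging the two eigen-pairs sends $g_0$ to $g_0^{-1}\neq g_0$, so it normalizes $\langle g_0\rangle$ but must be \emph{excluded} from $C(g_0)$.
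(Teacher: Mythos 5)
Your proposal is correct and follows essentially the same route as the paper: both use the decomposition $\mathrm{Aut}(S)\simeq(\mathbb{Z}/3\mathbb{Z})^3\rtimes\mathrm{S}_4$ together with the eigenvalue characterization of type $\mathrm{A}_2$ from Example~\ref{exampleA2A22}, show that every type $\mathrm{A}_2$ element lies in the diagonal subgroup, count the six diagonal ones, and compute the centralizer via the permutations preserving the two eigenvalue pairs. The only cosmetic differences are that you rule out elements with nontrivial image in $\mathrm{S}_4$ by noting the three distinct eigenvalues of the $3\times 3$ monomial block, where the paper instead computes the characteristic polynomial $(\lambda-1)(\lambda^3-1)$ explicitly, and you add a useful sanity check via the orbit--stabilizer count.
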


\begin{proof}
The Fermat cubic surface $S$ is defined by the equation \mbox{$x^3+y^3+z^3+t^3=0$} in~$\mathbb{P}^3.$ The automorphism group of $S$ is isomorphic to $(\mathbb{Z}/3\mathbb{Z})^3 \rtimes \mathrm{S}_4$ (see, for instance,~\cite[Theorem 9.5.6]{DolgachevAlg} and~\cite[Lemma 5.1]{DolgachevDuncan}). The group $\mathrm{S}_4$ acts by the permutations of the coordinates $x,$ $y,$ $z,$ and $t.$ The group $(\mathbb{Z}/3\mathbb{Z})^3$ acts by the multiplication of the coordinates by cube roots of unity. Any element in $(\mathbb{Z}/3\mathbb{Z})^3$ can be written as
\begin{equation}\label{eq:repFermataut}
\begin{pmatrix}
1 & 0 & 0 & 0\\
0 & \omega^a & 0 & 0\\
0 & 0 & \omega^b & 0\\
0 & 0 & 0 & \omega^c
\end{pmatrix} \in \mathrm{PGL}_{4}(\overline{\mathbf{F}}),
\end{equation}
\noindent where $\omega$ is a non-trivial cube root of unity and $a,b,c \in \{0,1,2\}.$ From Example~\ref{exampleA2A22} we get that the element of the form  \eqref{eq:repFermataut} is of type $\mathrm{A}_2$  if and only if 
$$
a=b, \; c=0; \quad \text{or} \quad a=c, \; b=0; \quad \text{or} \quad b=c, \; a=0.
$$

\noindent Therefore, these are $6$ elements of type $\mathrm{A}_2$ in $\mathrm{Aut}(S),$ and we will show now that there are not more.

By Example~\ref{exampleA2A22}  all elements of order $3$ in the group $\mathrm{S}_4$ are of type~$\mathrm{A}_2^2,$ because the eigenvalues of the corresponding matrices are $\omega, \omega^2,1,1.$ Let us consider all elements in $\mathrm{Aut}(S)$ of order $3$  of the form $gh,$ where~\mbox{$g \in (\mathbb{Z}/3\mathbb{Z})^3$} and $h \in \mathrm{S}_4.$ Assume that $h$ is non-trivial. So $h$ is of order $3.$ Since all elements of order $3$ in~$\mathrm{S}_4$ are conjugate, without loss of generality we can put $h=(234).$ Let us find all $g \in (\mathbb{Z}/3\mathbb{Z})^3$ such that $gh$ is of order $3.$ Since the matrix of $h$ is of the form
$$
\begin{pmatrix}
1 & 0 & 0 & 0\\
0 & 0 & 0 & 1\\
0 & 1 & 0 & 0\\
0 & 0 & 1 & 0
\end{pmatrix} \in \mathrm{PGL}_{4}(\overline{\mathbf{F}}),
$$
\noindent by direct computation we get that $g$ is of the form \eqref{eq:repFermataut} such that
\begin{equation}\label{eq:a+b+c=0mod3}
a+b+c \equiv 0 \mod 3.
\end{equation}
\noindent So again by direct computation one can find that the characteristic polynomial of~$gh$ such that $g$ satisfies \eqref{eq:a+b+c=0mod3} is $(\lambda-1)(\lambda^3-1).$ So the eigenvalues of such $gh$ are 
$$
1,1,\omega,\omega^2.
$$
\noindent  Therefore, by Example~\ref{exampleA2A22} such elements are of type $\mathrm{A}_2^2.$ So there are exactly $6$ elements of type~$\mathrm{A}_2$ in $\mathrm{Aut}(S),$  and they commute with each other.

Let us prove that the centralizer of any element of type $\mathrm{A}_2$ in $\mathrm{Aut}(S)$ is isomorphic to~$(\mathbb{Z}/3\mathbb{Z})^3 \rtimes (\mathbb{Z}/2\mathbb{Z})^2.$ Indeed, as we showed above all such elements lie in the normal subgroup $(\mathbb{Z}/3\mathbb{Z})^3 \subset \mathrm{Aut}(S).$ Let us fix the element $s \in \mathrm{Aut}(S)$ of type~$\mathrm{A}_2$ and consider the elements in $\mathrm{S}_4$ which commute with $s.$ These are the elements  which permute the eigenvectors of $s$ with the same eigenvalues. Recall from Example \ref{exampleA2A22} that  eigenvalues in $\mathrm{PGL}_4(\mathbf{F})$ up to multiplication by non-zero element of $s$ are $1,1,\omega,\omega.$ Thus, the centraliser is isomorphic to $(\mathbb{Z}/3\mathbb{Z})^3 \rtimes (\mathbb{Z}/2\mathbb{Z})^2.$

\end{proof}

\begin{Lemma}\label{lemma:Sylow}
Let $S$ be a smooth cubic surface over a field $\mathbf{F}$ of characteristic different from $2$ and $3.$  Let $E_1, \ldots, E_6$ be pairwise skew lines on $S_{\overline{\mathbf{F}}}$ such that their union is Galois-invariant. Suppose that there is an element~$b \in \mathrm{Aut}(S)$ of order~$3$ such that it fixes~$E_1$,~$E_2$ and $E_3$ and cyclically permutes $E_4$, $E_5$ and~$E_6.$ Then there is a unique $3$-Sylow subgroup of the centralizer of the element $b$ in $\mathrm{Aut}(S),$ respectively $W(\mathrm{E}_6),$ and in both cases, it is isomorphic to $(\mathbb{Z}/3\mathbb{Z})^3.$ Moreover, both these groups coincide.

\end{Lemma}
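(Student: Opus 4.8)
The plan is to reduce everything to the Fermat cubic and then compare the centralizer of $b$ in the automorphism group with its centralizer in the Weyl group. Since $b$ fixes $E_1,E_2,E_3$ and cyclically permutes $E_4,E_5,E_6$, Corollary~\ref{lemma:Farmatcubic} shows that $S_{\overline{\mathbf{F}}}$ is the Fermat cubic, and Example~\ref{exampleA2} shows that $b$ is of type $\mathrm{A}_2$. Lemma~\ref{lemma:a2inaut} then gives the centralizer of $b$ in the (geometric) automorphism group: it is $(\mathbb{Z}/3\mathbb{Z})^3\rtimes(\mathbb{Z}/2\mathbb{Z})^2$, where the normal subgroup $N\simeq(\mathbb{Z}/3\mathbb{Z})^3$ is the group of rescalings of the Fermat coordinates by cube roots of unity. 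As $N$ is the kernel of the projection onto $(\mathbb{Z}/2\mathbb{Z})^2$, it is the unique, hence normal, $3$-Sylow subgroup of this centralizer. This settles the $\mathrm{Aut}(S)$ side, with $3$-Sylow subgroup $N$.

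It remains to control the $3$-Sylow subgroup of $Z:=Z_{W(\mathrm{E}_6)}(b)$ and to identify it with $N$. The first key step is that the full $3$-Sylow subgroup of $W(\mathrm{E}_6)$, which has order $3^4=81$, cannot centralize $b$. Indeed, the $3$-Sylow subgroup of $\mathrm{Aut}(S_{\overline{\mathbf{F}}})=(\mathbb{Z}/3\mathbb{Z})^3\rtimes\mathrm{S}_4$ already has order $81$, so it is a $3$-Sylow subgroup of $W(\mathrm{E}_6)$; writing it as $N\rtimes\langle\tau\rangle$ with $\tau$ a $3$-cycle in $\mathrm{S}_4$, a direct computation shows its center is cyclic of order $3$, generated by an element whose eigenvalues on $\mathbb{P}^3$ are $\omega,1,1,1$. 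By Example~\ref{exampleA2A22} such an element is \emph{not} of type $\mathrm{A}_2$. Consequently $b$ lies in the center of no $3$-Sylow subgroup of $W(\mathrm{E}_6)$: if a $3$-Sylow subgroup $P$ centralized $b$, then $\langle P,b\rangle$ would be a $3$-group, so maximality would force $b\in P$ and hence $b\in Z(P)$, a contradiction. Therefore the $3$-part of $|Z|$ is at most $27$, and since $N\subset Z$ has order $27$, the group $N$ is a $3$-Sylow subgroup of $Z$.

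Finally I must show that this $3$-Sylow subgroup is unique and agrees with the one above, i.e.\ that $N\trianglelefteq Z$. The cleanest route is to compute $Z$ using the subsystem $\mathrm{A}_2^3\subset\mathrm{E}_6$: the element $b$ is a Coxeter element of one $\mathrm{A}_2$ factor, its fixed sublattice is the orthogonal $\mathrm{A}_2\times\mathrm{A}_2$, and one obtains
\[
Z_{W(\mathrm{E}_6)}(b)\simeq\bigl(\langle b\rangle\times W(\mathrm{A}_2)\times W(\mathrm{A}_2)\bigr)\rtimes\mathbb{Z}/2\mathbb{Z}\simeq\bigl(\mathbb{Z}/3\mathbb{Z}\times\mathrm{S}_3\times\mathrm{S}_3\bigr)\rtimes\mathbb{Z}/2\mathbb{Z}
\]
of order $216$, with the $\mathbb{Z}/2\mathbb{Z}$ interchanging the two $\mathrm{A}_2$ factors. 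Here the product of the three order-$3$ subgroups is exactly $N$, it is a product of normal subgroups of $\langle b\rangle\times\mathrm{S}_3\times\mathrm{S}_3$ that is stable under the swap, hence $N\trianglelefteq Z$ and $N$ is the unique $3$-Sylow subgroup of $Z$. The same order can be obtained without the structure theory, as the $\mathrm{A}_2$ conjugacy class has $2\cdot 120=240$ elements ($120$ being the number of $\mathrm{A}_2$-subsystems of $\mathrm{E}_6$), so $|Z|=51840/240=216$; then the geometric centralizer $N\rtimes(\mathbb{Z}/2\mathbb{Z})^2$ of order $108$ sits in $Z$ with index $2$, is therefore normal, and $N$, being characteristic in it, is normal in $Z$. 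Comparing the two sides, the $3$-Sylow subgroup of the centralizer of $b$ in $\mathrm{Aut}(S)$ and in $W(\mathrm{E}_6)$ is $N\simeq(\mathbb{Z}/3\mathbb{Z})^3$ in both cases, and the two groups coincide. I expect the main obstacle to be precisely this last paragraph: pinning down the order and structure of $Z_{W(\mathrm{E}_6)}(b)$ (equivalently, counting the $\mathrm{A}_2$-subsystems of $\mathrm{E}_6$, or invoking Carter's class data), which is the only genuinely nontrivial Weyl-group computation, whereas the reduction to Fermat and the center-of-Sylow argument are routine.
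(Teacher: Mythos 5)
Your proposal is correct, and its ``second route'' is precisely the paper's proof: reduce to the Fermat cubic via Corollary~\ref{lemma:Farmatcubic}, get the centralizer $C\simeq(\mathbb{Z}/3\mathbb{Z})^3\rtimes(\mathbb{Z}/2\mathbb{Z})^2$ in $\mathrm{Aut}(S_{\overline{\mathbf{F}}})$ from Lemma~\ref{lemma:a2inaut}, compute $|Z_{W(\mathrm{E}_6)}(b)|=51840/240=216$ from Carter's class data, observe that $C$ has index $2$ and is therefore normal, and conclude that its unique $3$-Sylow subgroup $N$ is normal, hence the unique $3$-Sylow subgroup, in the Weyl centralizer as well. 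The additional material you supply is sound but largely redundant: once $|Z_{W(\mathrm{E}_6)}(b)|=216=2^3\cdot 3^3$ is known, the $3$-part is automatically $27$, so the center-of-Sylow argument (which is a nice observation --- the center of a $3$-Sylow subgroup of $W(\mathrm{E}_6)$ is generated by an element with eigenvalues $\omega,1,1,1$, which is not of type $\mathrm{A}_2$) buys nothing extra; and the structural description $Z\simeq(\langle b\rangle\times W(\mathrm{A}_2)\times W(\mathrm{A}_2))\rtimes\mathbb{Z}/2\mathbb{Z}$ via the $\mathrm{A}_2^3$ subsystem is asserted rather than proved, so as your primary route it would leave a gap --- but since you also give the index-$2$ argument, which needs only the order $216$ and the already-established structure of $C$, the proof as a whole is complete.
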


\begin{proof}
By Corollary \ref{lemma:Farmatcubic} we get that $S_{\overline{\mathbf{F}}}$ is the Fermat cubic surface. Therefore, we have $\mathrm{Aut}(S_{\overline{\mathbf{F}}}) \simeq (\mathbb{Z}/3\mathbb{Z})^3 \rtimes \mathrm{S}_4.$ By Lemma \ref{lemma:a2inaut} the centralizer $C$ of the element~$b$ in~$\mathrm{Aut}(S_{\overline{\mathbf{F}}})$ is isomorphic to  $(\mathbb{Z}/3\mathbb{Z})^3 \rtimes (\mathbb{Z}/2\mathbb{Z})^2,$ since by Example~\ref{exampleA2} the element~$b$ is of type~$\mathrm{A}_2.$ By Sylow' theorems there is a unique $3$-Sylow subgroup $Z$ in $C,$ namely 
$$
Z \simeq (\mathbb{Z}/3\mathbb{Z})^3.
$$
\noindent   In particular, it is normal in $C$ and is preserved by the automorphism group of $C.$

 Let us find the order of the centralizer $\mathcal{G}$ for the element $b$ in $W(\mathrm{E}_6).$ By Example~\ref{exampleA2} the element $b$ is of type $\mathrm{A}_2.$ By the classification of conjugacy classes of elements in $W(\mathrm{E}_6)$ (see~\cite[Table 9]{Carter}) the number of elements in the conjugacy class of the element $b$ is equal to $240.$ Therefore, the order of the centralizer is equal to 
$$
\frac{51840}{240}=216=2^3 \cdot 3^3.
$$

Let us prove that $\mathcal{G}$ contains a unique Sylow $3$-subgroup. Indeed, we have $\mathcal{G} \supset C,$ because $\mathrm{Aut}(S_{\overline{\mathbf{F}}}) \subset W(\mathrm{E}_6).$ The subgroup $C$ is normal in $\mathcal{G}$ because its index is~$2.$ In particular, $C$ is preserved by conjugation of elements in $\mathcal{G}.$ Thus, the unique $3$-Sylow subgroup $Z$ of $C$ from above is preserved by conjugation in $\mathcal{G}.$ So by~\cite[Exercise~8a), Section 4.4]{Foote} the subgroup $Z$ is normal in $\mathcal{G}$ as well. As $Z$ is a Sylow subgroup in $\mathcal{G}$ we get that it is a unique Sylow $3$-subgroup. 
Hence, $Z \simeq (\mathbb{Z}/3\mathbb{Z})^3$ is a unique $3$-Sylow subgroup of both $\mathcal{G}$ and $C.$

\end{proof}

Let us prove the auxiliary proposition about endomorphisms of the Picard group of a smooth cubic surface  which is needed for the lemmas below.

\begin{Prop}\label{prop:WE6}
Let $S$ be a smooth cubic surface over an algebraically closed field~$\mathbf{F}$  and set $\mathrm{Pic}(S)_{\mathbb{Q}}=\mathrm{Pic}(S)\otimes {\mathbb{Q}}.$ Let $\alpha$ be an endomorphism of  the $\mathbb{Q}$-vector space $\mathrm{Pic}(S)_{\mathbb{Q}}$  such that $\alpha$ fixes the canonical class $K_S$ and  maps pairwise skew lines $E_1, \ldots, E_6$ to some pairwise skew lines $\alpha(E_1), \ldots, \alpha(E_6).$ Then~$\alpha$ is an automorphism of~$\mathrm{Pic}(S)_{\mathbb{Q}}$ which restricts to an automorphism of~$\mathrm{Pic}(S)$ and preserves the intersection form on $\mathrm{Pic}(S).$ In particular, it  lies in $W(\mathrm{E}_6).$
\end{Prop}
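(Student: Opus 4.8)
The plan is to reduce everything to linear algebra with respect to a convenient $\mathbb{Q}$-basis of $\mathrm{Pic}(S)_{\mathbb{Q}}$ assembled from the canonical class and the given skew six. Over the algebraically closed field $\mathbf{F}$ the surface $S$ is the blow-up of $\mathbb{P}^2$ at six points, so in the basis $\ell, E_1, \ldots, E_6$ (line class and exceptional classes) one has $E_i^2 = -1$, $E_i \cdot E_j = 0$ for $i \neq j$, $K_S \cdot E_i = -1$ by adjunction, $K_S^2 = 3$, and $-K_S = 3\ell - \sum_i E_i$, whence $\ell = \tfrac{1}{3}\bigl(-K_S + \sum_i E_i\bigr)$. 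A direct Gram-determinant computation shows that the intersection matrix of $\{K_S, E_1, \ldots, E_6\}$ (with diagonal $3,-1,\ldots,-1$ and off-diagonal $K_S\cdot E_i=-1$) is nonsingular, of determinant $9$, so this set is a $\mathbb{Q}$-basis of $\mathrm{Pic}(S)_{\mathbb{Q}}$. Crucially, the same computation applies verbatim to any six pairwise skew lines, since all the entries are forced by the line relations; in particular $\{K_S, \alpha(E_1), \ldots, \alpha(E_6)\}$ is again a $\mathbb{Q}$-basis.

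First I would prove that $\alpha$ is a $\mathbb{Q}$-automorphism. As $\alpha(K_S) = K_S$, the map $\alpha$ carries the $\mathbb{Q}$-basis $\{K_S, E_1, \ldots, E_6\}$ to $\{K_S, \alpha(E_1), \ldots, \alpha(E_6)\}$, which is again a basis by the Gram computation applied to the skew six $\alpha(E_i)$. Hence $\alpha$ sends a basis to a basis and is invertible on $\mathrm{Pic}(S)_{\mathbb{Q}}$. Next I would check that $\alpha$ preserves the intersection form; by bilinearity it suffices to verify this on the chosen basis. All relevant products match those of $\{K_S,E_i\}$: indeed $\alpha(K_S)^2 = K_S^2$; $\alpha(K_S)\cdot\alpha(E_i) = K_S\cdot\alpha(E_i) = -1 = K_S\cdot E_i$ because $\alpha(E_i)$ is a line; and $\alpha(E_i)\cdot\alpha(E_j) = -\delta_{ij} = E_i\cdot E_j$ because the $\alpha(E_i)$ are pairwise skew lines of self-intersection $-1$. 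So the two Gram matrices coincide and $\alpha$ is an isometry of the form on $\mathrm{Pic}(S)_{\mathbb{Q}}$.

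The main obstacle is integrality: to land in $W(\mathrm{E}_6)$, $\alpha$ must restrict to an automorphism of the lattice $\mathrm{Pic}(S)$, whereas $\{K_S, E_1, \ldots, E_6\}$ is only a $\mathbb{Q}$-basis, spanning an index-$3$ sublattice. The key geometric input is that for \emph{any} six pairwise skew lines $F_1, \ldots, F_6$ on $S$ the class $\tfrac{1}{3}\bigl(-K_S + \sum_i F_i\bigr)$ is integral, being the pullback of a line under the contraction $S \to \mathbb{P}^2$ of the $F_i$ (a skew six can always be blown down to $\mathbb{P}^2$, and there $-K_S = 3\ell' - \sum_i F_i$ with $\ell'$ the hyperplane class). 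Applying this to $F_i = \alpha(E_i)$ gives $\alpha(\ell) = \tfrac{1}{3}\bigl(-K_S + \sum_i \alpha(E_i)\bigr) \in \mathrm{Pic}(S)$; combined with $\alpha(E_i) \in \mathrm{Pic}(S)$ and the fact that $\ell, E_1, \ldots, E_6$ is a $\mathbb{Z}$-basis, this shows $\alpha(\mathrm{Pic}(S)) \subseteq \mathrm{Pic}(S)$, i.e. the matrix $M$ of $\alpha$ in this $\mathbb{Z}$-basis is integral.

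Finally I would close the argument by unimodularity. Writing $J$ for the Gram matrix of the intersection form in the $\mathbb{Z}$-basis $\ell, E_1, \ldots, E_6$, one has $\det J = \pm 1$, and form-preservation reads $M^{T} J M = J$. Taking determinants yields $(\det M)^2 = 1$, so $\det M = \pm 1$ and $M \in \mathrm{GL}_7(\mathbb{Z})$; hence $\alpha$ restricts to an automorphism of the lattice $\mathrm{Pic}(S)$ preserving the intersection form. Since moreover $\alpha$ fixes $K_S$, the characterization of $W(\mathrm{E}_6)$ recalled in Section~\ref{sectionofcubic} gives $\alpha \in W(\mathrm{E}_6)$, completing the proof. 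I expect the integrality step to be the only non-formal part; the rest is bookkeeping on the distinguished basis.
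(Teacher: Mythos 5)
Your proposal is correct and follows essentially the same route as the paper: both arguments hinge on the observation that $\tfrac{1}{3}\bigl(-K_S+\sum_i\alpha(E_i)\bigr)$ is integral because it is the pullback of a line under the contraction of the skew six $\alpha(E_1),\ldots,\alpha(E_6)$, and both verify form-preservation from the fact that the $\alpha(E_i)$ are pairwise skew $(-1)$-curves. The only (cosmetic) difference is at the very end: the paper concludes surjectivity on the lattice by noting that $\alpha(L),\alpha(E_1),\ldots,\alpha(E_6)$ again generate $\mathrm{Pic}(S)$, whereas you deduce $\det M=\pm1$ from $M^{T}JM=J$ and unimodularity of $J$; both closings are valid.
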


\begin{proof}
 The divisors~$K_S, E_1, \ldots, E_6$ and $K_S, \alpha(E_1), \ldots, \alpha(E_6)$ are two bases of the vector space~$\mathrm{Pic}(S)_\mathbb{Q},$ so $\alpha$  is an automorphism of $\mathrm{Pic}(S)_\mathbb{Q}.$ Since any divisor  $\alpha(E_i)$ for all $i=1, \ldots, 6$ is $(-1)$-curves, the automorphism $\alpha$  preserves the intersection form on $\mathrm{Pic}(S)_{\mathbb{Q}}.$  The Picard group~$\mathrm{Pic}(S)$ is a lattice generated by~$E_1, \ldots, E_6$ and the divisor 
$$
L=\frac{1}{3}\left(-K_S+E_1+\ldots+E_6 \right),
$$
\noindent which is a pull-back of a line via the morphism $\pi:S \to \mathbb{P}^2$ contracting $E_1, \ldots, E_6.$ The automorphism $\alpha$  maps $L$ to
$$
\alpha(L)=\frac{1}{3}\left(-K_S+\alpha(E_1)+\ldots+\alpha(E_6 \right)).
$$

\noindent We can blow down the skew lines $\alpha(E_1), \ldots, \alpha(E_6)$ and obtain a map~\mbox{$\widetilde{\pi}:S \to \mathbb{P}^2$}  such that $\alpha(L)=\widetilde{\pi}^*l,$ where $l$ is a line on $\mathbb{P}^2.$ Therefore, the Picard group of~$S$ is generated by $\alpha(L), \alpha(E_1), \ldots, \alpha(E_6).$ Hence, the element $\alpha \in \mathrm{Aut}\left(\mathrm{Pic}(S)_\mathbb{Q}\right)$ restricts to an automorphism of $\mathrm{Pic}(S)$ and, thus, lies in $W(\mathrm{E}_6).$

\end{proof}

Now we are going to prove two lemmas which are the main ingredients of the proof of Proposition~\ref{propZ/3Z}.

\begin{Lemma}\label{lemma:constructionofautomorphisms}
Let $S$ be a smooth cubic surface over a field $\mathbf{F}$ of characteristic different from $2$ and $3.$ Let $E_1, \ldots, E_6$ be pairwise skew lines on $S_{\overline{\mathbf{F}}}$ such that their union is Galois-invariant. Suppose that there is an element~$b \in W(\mathrm{E}_6)$  such that it fixes~$E_1$,~$E_2$ and~$E_3,$ cyclically permutes $E_4$, $E_5$ and~$E_6$ and commutes with the image of the Galois group $\mathrm{Gal}(\overline{\mathbf{F}}/\mathbf{F})$ in $W(\mathrm{E}_6).$  Then there is an element~\mbox{$r \in W(\mathrm{E}_6)$} such that it commutes  with the image of the Galois group in~$W(\mathrm{E}_6),$ and the elements~$r$ and~$b$  generate a group isomorphic to $(\mathbb{Z}/3\mathbb{Z})^2.$ Moreover, suppose that there is an element~$c$  in $W(\mathrm{E}_6)$  that  fixes~$E_4$,~$E_5$ and~$E_6$ and cyclically permutes~$E_1$,~$E_2$ and~$E_3.$ Then $r$ can be chosen in such a way that $r,$ $b$ and $c$ generate the group isomorphic to $(\mathbb{Z}/3\mathbb{Z})^3.$

\end{Lemma}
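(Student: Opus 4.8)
The plan is to recognize $b$ and $c$ as genuine permutations inside $W(\mathrm{E}_6)$, confine the image of the Galois group to a small subgroup, and then exhibit one explicit element $r$ that settles both assertions at once.

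First I would note that $b$ and $c$ are forced by the hypotheses. Every element of $W(\mathrm{E}_6)$ fixes $K_S$, and $K_S,E_1,\dots,E_6$ is a basis of $\mathrm{Pic}(S_{\overline{\mathbf F}})_{\mathbb Q}$. As $b$ permutes $E_1,\dots,E_6$ as the cycle $(456)$ and fixes $K_S=-3L+\sum_i E_i$, it must fix $L$; hence $b$ is exactly the permutation $(456)$ in the subgroup $\mathrm{S}_6\subset W(\mathrm{E}_6)$ attached to the sixer, and likewise $c=(123)$. The same reasoning shows that the image $\Gamma\subset W(\mathrm{E}_6)$ of $\mathrm{Gal}(\overline{\mathbf F}/\mathbf F)$ lies in $\mathrm{S}_6$: its elements permute $\{E_1,\dots,E_6\}$ (their union is defined over $\mathbf F$) and fix $K_S$. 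Since $\Gamma$ commutes with $(456)$, it is contained in the centralizer $C_{\mathrm{S}_6}((456))=\mathrm{S}_{\{1,2,3\}}\times\langle(456)\rangle$, which is generated by $(12)$, $(123)$ and $(456)$.

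Next I would write down $r$. Guided by the diagonal automorphisms of the Fermat cubic surface, set $Q_k=2L-\sum_{i\neq k}E_i$ and define $w$ on the basis by $w(K_S)=K_S$ and
\begin{gather*}
w(E_1)=L-E_2-E_3,\quad w(E_2)=L-E_1-E_3,\quad w(E_3)=L-E_1-E_2,\\
w(E_4)=Q_4,\quad w(E_5)=Q_5,\quad w(E_6)=Q_6.
\end{gather*}
A short intersection computation shows these six classes are pairwise skew lines, so $w\in W(\mathrm{E}_6)$ by Proposition~\ref{prop:WE6}, and $w$ has order $3$. Composing the formulas above one checks the three relations
\[
wb=bw,\qquad wc=cw,\qquad (12)\,w\,(12)=w .
\]
As $(12)$, $(123)=c$ and $(456)=b$ generate $\mathrm{S}_{\{1,2,3\}}\times\langle(456)\rangle\supseteq\Gamma$, these relations force $w$ to commute with all of $\Gamma$. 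Moreover $w\notin\mathrm{S}_6$, since $w(E_4)=Q_4$ is not one of the $E_i$, whereas $b,c\in\mathrm{S}_6$ generate $(\mathbb Z/3\mathbb Z)^2$; as the commuting order-$3$ elements $b,c,w$ then satisfy $w\notin\langle b,c\rangle$, we obtain $\langle b,w\rangle\cong(\mathbb Z/3\mathbb Z)^2$ (with $\langle b\rangle\subsetneq\langle b,w\rangle$) and $\langle b,c,w\rangle\cong(\mathbb Z/3\mathbb Z)^3$. Taking $r=w$ settles both parts: $r$ commutes with $\Gamma$, the group $\langle r,b\rangle$ is $(\mathbb Z/3\mathbb Z)^2$, and, when $c$ is present, $\langle r,b,c\rangle$ is $(\mathbb Z/3\mathbb Z)^3$.

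The one genuinely delicate point is the relation $(12)\,w\,(12)=w$. Conceptually, $w$ lies in the unique Sylow $3$-subgroup $Z\cong(\mathbb Z/3\mathbb Z)^3$ of the $W(\mathrm{E}_6)$-centralizer of the type-$\mathrm{A}_2$ element $b$ (determined in Lemma~\ref{lemma:Sylow}; that $b$ is of type $\mathrm{A}_2$ is Example~\ref{exampleA2}), and $\Gamma$ acts on $Z$ through the involution induced by $(12)$; what must be shown is that this involution fixes a $2$-dimensional subspace of $Z$, not merely the line $\langle b\rangle$. The a priori danger is real: $(12)$ does send $c\mapsto c^{-1}$, so if it acted as $-\mathrm{id}$ on $Z/\langle b\rangle$ then $\langle b\rangle$ would be the only $\Gamma$-fixed subgroup and no suitable $r$ could exist. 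Ruling this out is exactly the content of the conjugation computation on the $27$ lines, for which one needs the dictionary expressing each line as $E_i$, $L-E_i-E_j$, or $Q_k$ relative to the sixer; that computation shows $w$ itself is fixed, so the fixed space is $\langle b,w\rangle$ and the construction goes through.
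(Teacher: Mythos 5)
Your proposal is correct and follows essentially the same route as the paper: you build the same explicit order-$3$ element of $W(\mathrm{E}_6)$ (your $w$ is the inverse of the paper's $r$, sending the sixer $E_1,\dots,E_6$ to the sixer $L_{23},L_{13},L_{12},Q_4,Q_5,Q_6$), justify that it lies in $W(\mathrm{E}_6)$ via Proposition~\ref{prop:WE6}, and verify the commutation relations on the basis $K_S,E_1,\dots,E_6$. The only cosmetic difference is that you check Galois-equivariance by reducing $\Gamma$ to the explicit generators $(12),(123),(456)$ of $C_{\mathrm{S}_6}((456))$, whereas the paper verifies $gr(E_i)=rg(E_i)$ directly for every $g$ preserving the two triples; these are the same computation.
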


\begin{proof}

On~$S_{\overline{\mathbf{F}}}$ we denote by~$Q_i$ for all $i=1, \ldots, 6$ the pairwise skew lines such that 
$$
E_i \cdot Q_i=0 \quad \text{and} \quad E_i \cdot Q_j=1 \quad \text{for} \quad i \neq j.
$$

\noindent For $i,j \in \{1,\ldots, 6\}$ and $i<j$ we denote by $L_{ij}$ the remaining $15$ lines on $S_{\overline{\mathbf{F}}}$ such that 
\begin{gather*}
L_{ij} \cdot E_k=0 \quad \text{if} \quad k \notin \{i,j\} \quad \text{and} \quad L_{ij} \cdot E_k=1 \quad \text{if} \quad k \in \{i,j\};\\
L_{ij} \cdot Q_k=0 \quad \text{if} \quad k \notin \{i,j\} \quad \text{and} \quad L_{ij} \cdot Q_k=1 \quad \text{if} \quad k \in \{i,j\};\\
L_{ij} \cdot L_{kl}=0 \quad \text{if} \quad \{i,j\} \cap \{k,l\} \neq \varnothing \quad \text{and} \quad L_{ij} \cdot L_{kl}=1 \quad \text{if} \quad \{i,j\} \cap \{k,l\}=\varnothing.
\end{gather*}

Note that this description of the $(-1)$-curves on $S_{\overline{\mathbf{F}}}$ is equivalent the other description which says that $E_i$ for $i=1, \ldots, 6$ are the exceptional divisors of the blowup of $\mathbb{P}^2_{\overline{\mathbf{F}}}$ in $6$ points in general position $P_1, \ldots, P_6,$ respectively, $Q_i$ is the strict transform of the unique conic passing through all but one point $P_i$ among $P_1, \ldots, P_6$ for $i=1, \ldots, 6,$ and $L_{ij}$ is  the strict transform of the unique line passing through~$P_i$ and $P_j$ for $i,j=1, \ldots, 6$ and $i \neq j.$

Up to reordering, one can assume that $b$ acts as the permutation $(456),$ and $c$ as the permutation $(123)$ on $E_1, \ldots, E_6.$ By Proposition \ref{prop:WE6} there is an element~\mbox{$r \in W(\mathrm{E}_6)$} which fixes the canonical class $K_S$  and  maps the skew lines 
$$
E_1, \; E_2, \; E_3, \; E_4, \; E_5, \; E_6
$$
\noindent  to the other skew lines
$$
Q_1, \; Q_2, \; Q_3, \; L_{56}, \; L_{46}, \; L_{45},
$$
\noindent respectively.  Furthermore, one can see that~$r$ has order $3.$ Indeed, the element $r^2$ maps  the skew lines 
$$
E_1, \; E_2, \; E_3, \; E_4, \; E_5, \; E_6
$$
\noindent to other skew lines
$$
L_{23}, \; L_{13}, \; L_{12}, \; Q_4, \; Q_5, \; Q_6,
$$
\noindent respectively,  and $r^3(E_i)=E_i$ for all~$i \in \{1, \ldots, 6\}.$

 Let us   prove that the element $r$ commutes with the image $\Gamma$ of the Galois group~$\mathrm{Gal}(\overline{\mathbf{F}}/\mathbf{F})$ in $W(\mathrm{E}_6).$ It is enough to  check this on the curves~$E_1, \ldots, E_6.$  As $\Gamma$ commutes with $b,$ any elements of $\Gamma$ preserves the sets $\{E_1,E_2,E_3\}$ and~\mbox{$\{E_4,E_5,E_6\}.$} Let $g \in \Gamma.$ We will show that $gr(E_i)=rg(E_i)$ for $i=1,\ldots, 6.$ Consider $i \in \{1,2,3\}.$ Since $\Gamma$ preserves the set $\{E_1,E_2,E_3\},$ we have  $g(E_i)=E_l$ for some $l \in \{1,2,3\}.$ We obtain 
$$
gr(E_i)=g(Q_i)=Q_l=r(E_l).
$$ 
Consider $i \in \{4,5,6\}.$ Since $\Gamma$ preserves the set $\{E_4,E_5,E_6\},$ we have  $g(E_i)=E_l$ for some $l \in \{4,5,6\}.$ We obtain 
$$
gr(E_i)=g(L_{jk})=L_{mn}=r(E_l),
$$ 
\noindent where $\{i,j,k\}=\{l,m,n\}=\{4,5,6\}.$ Therefore, the element $r$ commutes with $\Gamma.$

For the elements $b$ and $r$ one has $br(K_S)=K_S=rb(K_S)$ and 
\begin{equation}\label{eq:br=rb}
\begin{gathered}
br(E_1)=Q_1=rb(E_1); \quad br(E_4)=L_{46}=rb(E_4);\\
br(E_2)=Q_2=rb(E_2);  \quad br(E_5)=L_{45}=rb(E_5);\\
br(E_3)=Q_3=rb(E_3);  \quad br(E_6)=L_{56}=rb(E_6).
\end{gathered}
\end{equation}
\noindent As $K_S, E_1, \ldots, E_6$ is a basis of $\mathrm{Pic}(S_{\overline{\mathbf{F}}}) \otimes \mathbb{Q}$, the elements~$b$ and $r$  commute.  Moreover, we obtain $r \neq b,b^2,$ because $b$ and $b^2$ fix the set~$E_1, \ldots, E_6,$ while $r$ does not. Thus, the elements $b$ and $r$ generate a group isomorphic to $(\mathbb{Z}/3\mathbb{Z})^2.$ 

Let $c \in W(\mathrm{E}_6)$ be an element  that   fixes~$E_4$,~$E_5$ and~$E_6$ and maps $E_1$ to~$E_2$,~$E_2$ to $E_3$ and~$E_3$ to $E_1.$ The elements $b$ and $c$ commute because they correspond to the elements~$(456)$ and $(123)$ in $\mathrm{S}_6 \subset W(\mathrm{E}_6),$ respectively. The elements $c$ and~$r$ also commute, which can be seen from a computation similar to \eqref{eq:br=rb}.  Hence, the elements $r$,~$b$ and $c$ generate a group isomorphic to~$(\mathbb{Z}/3\mathbb{Z})^n,$ where~\mbox{$n \leqslant 3.$} The elements~$b$ and~$c$ generate  a subgroup isomorphic to~\mbox{$H\simeq (\mathbb{Z}/3\mathbb{Z})^2$} which preserves the set~$\{E_1, \ldots, E_6\},$ while the element $r$ does not preserves this set. Thus, they generate a group~\mbox{$(\mathbb{Z}/3\mathbb{Z})^3 \subset W(\mathrm{E}_6).$ }

\end{proof}

\begin{Lemma}\label{lemma:constructionofautomorphismsz3z2}
Let $S$ be a smooth cubic surface over a field $\mathbf{F}$ of characteristic different from $2$ and $3.$ Let $E_1, \ldots, E_6$ be pairwise skew lines on $S_{\overline{\mathbf{F}}}$ such that their union be Galois-invariant. Suppose that there is an element~$b \in \mathrm{Aut}(S)$ of order~$3$ such that it fixes~$E_1$,~$E_2$ and $E_3$ and cyclically permutes $E_4$, $E_5$ and~$E_6.$ Then there is a biregular action  of~$(\mathbb{Z}/3\mathbb{Z})^2$ on $S,$ such that this group is generated by $b$ and some $r \in \mathrm{Aut}(S).$ Moreover,  assume that $c$ is an element in $\mathrm{Aut}(S)$ such that it  fixes~$E_4$,~$E_5$ and $E_6$ and cyclically permutes $E_1$, $E_2$ and~$E_3.$ Then $r$ can be chosen in such a way that~$b,$~$c$  and $r$  generate a subgroup isomorphic to $(\mathbb{Z}/3\mathbb{Z})^3$ in $\mathrm{Aut}(S).$

\end{Lemma}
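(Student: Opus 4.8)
The plan is to produce $r$ first as an abstract element of $W(\mathrm{E}_6)$ and only afterwards to recognize it as a genuine biregular automorphism defined over $\mathbf{F}$. Since $b \in \mathrm{Aut}(S)$ is defined over $\mathbf{F}$, its image in $W(\mathrm{E}_6)$ commutes with the image $\Gamma$ of the Galois group $\mathrm{Gal}(\overline{\mathbf{F}}/\mathbf{F})$, and, up to reordering, this image acts as the permutation $(456)$ on $E_1,\ldots,E_6$. Hence $b$ satisfies the hypotheses of Lemma~\ref{lemma:constructionofautomorphisms}, which supplies an element $r \in W(\mathrm{E}_6)$ of order $3$ that commutes both with $b$ and with $\Gamma$ and generates a copy of $(\mathbb{Z}/3\mathbb{Z})^2$ together with $b$.

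The key step is to upgrade $r$ from $W(\mathrm{E}_6)$ to $\mathrm{Aut}(S_{\overline{\mathbf{F}}})$. By Corollary~\ref{lemma:Farmatcubic} the surface $S_{\overline{\mathbf{F}}}$ is the Fermat cubic, so Lemma~\ref{lemma:Sylow} applies: the centralizers of $b$ in $\mathrm{Aut}(S_{\overline{\mathbf{F}}})$ and in $W(\mathrm{E}_6)$ have a single common Sylow $3$-subgroup $Z \simeq (\mathbb{Z}/3\mathbb{Z})^3$. As $r$ has order $3$ and commutes with $b$, it lies in the centralizer of $b$ in $W(\mathrm{E}_6)$, and being a $3$-element it must lie in the unique Sylow $3$-subgroup $Z$ of that centralizer; since $Z \subset \mathrm{Aut}(S_{\overline{\mathbf{F}}})$, we conclude $r \in \mathrm{Aut}(S_{\overline{\mathbf{F}}})$. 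Because $r$ commutes with $\Gamma$, Lemma~\ref{lemma:alphagal} then shows that $r$ is defined over $\mathbf{F}$, i.e. $r \in \mathrm{Aut}(S)$. Thus $b$ and $r$ generate the desired biregular copy of $(\mathbb{Z}/3\mathbb{Z})^2$ inside $\mathrm{Aut}(S)$.

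For the final assertion, the extra hypothesis provides $c \in \mathrm{Aut}(S)$ whose action on $E_1,\ldots,E_6$ is the permutation $(123)$; since an element of $W(\mathrm{E}_6)$ is determined by its action on the $\mathbb{Q}$-basis $K_S, E_1,\ldots,E_6$ and $\mathrm{Aut}(S_{\overline{\mathbf{F}}})$ embeds into $W(\mathrm{E}_6)$, it follows that $c$ has order $3$ and commutes with $b$. Invoking the second part of Lemma~\ref{lemma:constructionofautomorphisms}, we may choose $r$ so that $b$, $c$ and $r$ generate $(\mathbb{Z}/3\mathbb{Z})^3$ inside $W(\mathrm{E}_6)$. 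The same Sylow argument as above places this $r$ in $Z \subset \mathrm{Aut}(S_{\overline{\mathbf{F}}})$ and then, via Lemma~\ref{lemma:alphagal}, in $\mathrm{Aut}(S)$. As $b$, $c$, $r$ all lie in $\mathrm{Aut}(S)$, which embeds into $W(\mathrm{E}_6)$, they generate a subgroup isomorphic to $(\mathbb{Z}/3\mathbb{Z})^3$ acting biregularly on $S$.

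I expect the crux to be the second paragraph, namely the passage from a lattice-theoretic $r$ to an actual geometric automorphism. This succeeds only because the relevant Sylow $3$-subgroups of the two centralizers literally coincide (Lemma~\ref{lemma:Sylow}); without that coincidence one would know only that $r$ preserves the Picard lattice and the intersection form, not that it is induced by an automorphism of the cubic surface. The commutation of $r$ with the Galois group, which is what permits the descent from $\overline{\mathbf{F}}$ to $\mathbf{F}$ through Lemma~\ref{lemma:alphagal}, is already guaranteed at the level of $W(\mathrm{E}_6)$ by the construction in Lemma~\ref{lemma:constructionofautomorphisms}.
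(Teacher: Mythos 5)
Your proposal is correct and follows essentially the same route as the paper: invoke Lemma~\ref{lemma:constructionofautomorphisms} to produce $r$ in $W(\mathrm{E}_6)$ commuting with $b$ and the Galois image, use the unique common Sylow $3$-subgroup from Lemma~\ref{lemma:Sylow} to place $r$ in $\mathrm{Aut}(S_{\overline{\mathbf{F}}})$, and descend to $\mathrm{Aut}(S)$ via Lemma~\ref{lemma:alphagal}. Your second paragraph merely makes explicit the Sylow step that the paper states tersely.
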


\begin{proof}
Up to reordering, one can assume that $b$ acts as a permutation $(456),$ and $c$ as $(123)$ on $E_1, \ldots, E_6.$ Let us consider the element $r \in W(\mathrm{E}_6)$ from Lemma~\ref{lemma:constructionofautomorphisms}, so that $b$ and~$r$ generate the group $(\mathbb{Z}/3\mathbb{Z})^2$ in $W(\mathrm{E}_6),$ and $r$ commutes with the image of the Galois group $\mathrm{Gal}(\overline{\mathbf{F}}/\mathbf{F})$ in $W(\mathrm{E}_6).$ Moreover, if there is an element~\mbox{$c \in \mathrm{Aut}(S)$} such that  it  fixes~$E_4$,~$E_5$ and~$E_6$ and cyclically permutes $E_1$, $E_2$ and~$E_3,$ then the element $r$ can be chosen so that $b,$ $c$ and $r$ are generate the group $(\mathbb{Z}/3\mathbb{Z})^3$ in $W(\mathrm{E}_6).$

 It remains to check that~$r$ lies in $\mathrm{Aut}(S).$ The element $r$ commutes with the element~$b,$ and therefore,  by Lemma \ref{lemma:Sylow} it lies in the centralizer of $b$ in the automorphism group~\mbox{$\mathrm{Aut}(S_{\overline{\mathbf{F}}}) \subset W(\mathrm{E}_6).$} Moreover, since the element $r$ commutes with the image of the Galois group in~$W(\mathrm{E}_6),$   by Lemma~\ref{lemma:alphagal} it is contained in $\mathrm{Aut}(S).$

\end{proof}

\section{3-groups in the birational automorphism groups}\label{section:proofresults}
In this section we prove Proposition  \ref{propZ/3Z}. First of all, let us study the $3$-groups in the automorphism groups of del Pezzo surfaces and conic bundles.  Recall that a smooth projective  surface $S$ is a del Pezzo surface if its anticanonical class $-K_S$ is ample. The degree of a del Pezzo surface $S$ is defined as  $d=(K_S)^2.$ The following fact is well-known.

\begin{Lemma}[{see, for instance, \cite[Lemma 2.1]{Beauville}}]\label{lemma:klein}
Let $\mathbf{F}$ be an algebraically closed field of characteristic different from $3.$ Then $\mathrm{PGL}_2(\mathbf{F})$ does not contain $(\mathbb{Z}/3\mathbb{Z})^2.$
\end{Lemma}

Let us study $3$-subgroups in the  automorphism groups of del Pezzo surfaces of degree $d.$ Recall that $1 \leqslant d \leqslant 9.$

\begin{Lemma}\label{lemma:dP}
Let $\mathbf{F}$ be an algebraically closed field of characteristic different from~$3.$ Let $S$ be a del Pezzo surface of degree $d \neq 3$ over $\mathbf{F}.$ Then its automorphism group does not contain $(\mathbb{Z}/3\mathbb{Z})^3.$
\end{Lemma}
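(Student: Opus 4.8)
The plan is to run through the classification of del Pezzo surfaces over the algebraically closed field $\mathbf{F}$ by the degree $d\in\{1,2,4,5,6,7,8,9\}$, and in each case to bound the $3$-rank of $\mathrm{Aut}(S)$ by $2$. The two recurring tools are Theorem~\ref{ThofBeauville}\ref{ThofBeauvilleaut3}, which says $(\mathbb{Z}/3\mathbb{Z})^3\not\subset\mathrm{PGL}_3(\mathbf{F})$, and Lemma~\ref{lemma:klein}, which implies that every finite abelian $3$-subgroup of $\mathrm{PGL}_2(\mathbf{F})$ is cyclic.

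The easy cases are those in which $\mathrm{Aut}(S)$ embeds into $\mathrm{PGL}_3(\mathbf{F})$. For $d=9$ we have $S\simeq\mathbb{P}^2$ and $\mathrm{Aut}(S)=\mathrm{PGL}_3(\mathbf{F})$, so the claim is exactly Theorem~\ref{ThofBeauville}\ref{ThofBeauvilleaut3}. For $d=8$ with $S\simeq\mathbb{F}_1$ and for $d=7$ one contracts the appropriate pairwise disjoint $(-1)$-curves, which are permuted by every automorphism, realizing $\mathrm{Aut}(S)$ as the stabilizer of a point, respectively of a two-point set, inside $\mathrm{PGL}_3(\mathbf{F})$; the embedding again reduces us to Theorem~\ref{ThofBeauville}\ref{ThofBeauvilleaut3}. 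For $d=8$ with $S\simeq\mathbb{P}^1\times\mathbb{P}^1$, a subgroup $G\simeq(\mathbb{Z}/3\mathbb{Z})^3$ has odd order, hence lies in the index-two subgroup $\mathrm{PGL}_2(\mathbf{F})\times\mathrm{PGL}_2(\mathbf{F})$; projecting to the two factors and applying Lemma~\ref{lemma:klein} exhibits $G$ inside a product of two cyclic $3$-groups, so its rank is at most $2$, a contradiction.

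When $d\in\{4,5\}$ the group $\mathrm{Aut}(S)$ is finite and acts faithfully on $\mathrm{Pic}(S_{\overline{\mathbf{F}}})$, giving an embedding into the Weyl group $W(\mathrm{A}_4)\simeq\mathrm{S}_5$ for $d=5$, respectively $W(\mathrm{D}_5)$ for $d=4$; since $|\mathrm{S}_5|=2^3\cdot3\cdot5$ and $|W(\mathrm{D}_5)|=2^7\cdot3\cdot5$ both have $3$-part equal to $3$, no $(\mathbb{Z}/3\mathbb{Z})^2$ occurs. For $d=6$ the surface is toric with $\mathrm{Aut}(S)\simeq T\rtimes(\mathrm{S}_3\times\mathbb{Z}/2\mathbb{Z})$, where $T\simeq(\mathbf{F}^*)^2$. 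A subgroup $G\simeq(\mathbb{Z}/3\mathbb{Z})^3$ would have to meet $T$ in its full $3$-torsion $T[3]\simeq(\mathbb{Z}/3\mathbb{Z})^2$ and surject onto an order-$3$ subgroup of $\mathrm{S}_3$; but an order-$3$ element of $\mathrm{S}_3$ acts on the character lattice of $T$ as a Coxeter element of $\mathrm{A}_2$, whose reduction modulo $3$ is nontrivial, so it does not centralize $T[3]$, contradicting the commutativity of $G$.

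The two cases that require genuine geometric input are $d=2$ and $d=1$, and I expect $d=1$ to be the main obstacle. For $d=2$ the anticanonical morphism realizes $S$ as a double cover of $\mathbb{P}^2$ whose deck transformation is the central Geiser involution $\gamma$; hence $\mathrm{Aut}(S)/\langle\gamma\rangle\hookrightarrow\mathrm{PGL}_3(\mathbf{F})$, and since any $3$-subgroup meets $\langle\gamma\rangle$ trivially it embeds into $\mathrm{PGL}_3(\mathbf{F})$, so Theorem~\ref{ThofBeauville}\ref{ThofBeauvilleaut3} applies. For $d=1$ the bicanonical morphism $\varphi_{|-2K_S|}$ presents $S$ as a double cover of the quadric cone $Q\simeq\mathbb{P}(1,1,2)$ with deck transformation the central Bertini involution $\beta$, so it suffices to bound the $3$-rank of $\mathrm{Aut}(Q)$ by $2$. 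Writing $\mathrm{Aut}(Q)\simeq\bigl((\mathrm{GL}_2(\mathbf{F})\times\mathbb{G}_m)/\mathbb{G}_m\bigr)\ltimes\mathrm{Sym}^2(\mathbf{F}^2)$, I would note that the unipotent radical $\mathrm{Sym}^2(\mathbf{F}^2)$ has no $3$-torsion, so a $3$-group injects into the reductive quotient, and the latter embeds into $\mathrm{PGL}_2(\mathbf{F})\times\mathbb{G}_m$ via $(A,t)\mapsto([A],\det(A)/t)$, which has $3$-rank at most $2$ by Lemma~\ref{lemma:klein}. The delicate points will be justifying the structural descriptions of $\mathrm{Aut}(S)$ for $d\in\{1,2,6\}$ and, for $d\le 5$, the faithfulness of the action on the Picard lattice; these I would quote from the standard classification of automorphism groups of del Pezzo surfaces.
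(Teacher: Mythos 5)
Your proposal is correct, and it follows the same overall strategy as the paper: a case-by-case analysis over the degree $d$, with the cases $d=9,8,7$ reduced to $\mathrm{PGL}_3(\mathbf{F})$ or to $\mathrm{PGL}_2(\mathbf{F})\times\mathrm{PGL}_2(\mathbf{F})$ exactly as in the paper, the cases $d=4,5$ killed by the $3$-part of $|W(\mathrm{A}_4)|$ and $|W(\mathrm{D}_5)|$, and $d=2$ handled via the anticanonical double cover. You diverge from the paper in two cases. For $d=6$ the paper argues geometrically: the projection of $(\mathbb{Z}/3\mathbb{Z})^3$ to $\mathrm{D}_6$ is cyclic of order dividing $3$, so there is an invariant triple of pairwise skew $(-1)$-curves, and blowing it down equivariantly lands the group in $\mathrm{PGL}_3(\mathbf{F})$; your argument instead stays inside $(\mathbf{F}^*)^2\rtimes\mathrm{D}_6$ and derives a contradiction from the fact that an order-$3$ element of $\mathrm{S}_3$ acts nontrivially on $T[3]\simeq(\mathbb{Z}/3\mathbb{Z})^2$ while the group is abelian --- more computational, but entirely self-contained and it avoids invoking the blow-down. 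For $d=1$ the paper uses that the base locus of $|-K_S|$ is a single point $p$ fixed by $\mathrm{Aut}(S)$, on whose tangent space the group acts faithfully, giving $(\mathbb{Z}/3\mathbb{Z})^3\subset\mathrm{GL}_2(\mathbf{F})$, which is impossible; your route through the bicanonical double cover of $\mathbb{P}(1,1,2)$ and the structure of $\mathrm{Aut}(\mathbb{P}(1,1,2))$ also works (the unipotent radical has no $3$-torsion in characteristic $\neq 3$, and the reductive quotient embeds in $\mathrm{PGL}_2(\mathbf{F})\times\mathbf{F}^*$, of $3$-rank $2$), but it is heavier and requires justifying the semidirect-product description of $\mathrm{Aut}(\mathbb{P}(1,1,2))$; the paper's tangent-space argument is the more economical one and you may want to adopt it. The faithfulness of $\mathrm{Aut}(S)$ on the tangent space at a fixed point (equivalently, on $\mathrm{Pic}(S)$ for $d\leqslant 5$) is standard and is the only external input you would still need to cite.
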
 

\begin{proof}

Suppose that $d=9$, i.e. $S \simeq \mathbb{P}^2.$ Then $\mathrm{Aut}(S) \simeq \mathrm{PGL}_3(\mathbf{F}),$ and by Theorem~\ref{ThofBeauville}\ref{ThofBeauvilleaut3} we obtain $(\mathbb{Z}/3\mathbb{Z})^3 \not\subset \mathrm{Aut}(S).$

 Suppose that  $d=8$ and  $S \simeq \mathbb{P}^1 \times \mathbb{P}^1.$ We get that 
$$
\mathrm{Aut}(\mathbb{P}^1 \times \mathbb{P}^1) \simeq \left((\mathrm{PGL}_2(\mathbf{F}) \times \mathrm{PGL}_2(\mathbf{F})\right) \rtimes \mathbb{Z}/2\mathbb{Z}
$$
\noindent which does not contain $(\mathbb{Z}/3\mathbb{Z})^3$ because $\mathrm{PGL}_2(\mathbf{F})$ does not contain $(\mathbb{Z}/3\mathbb{Z})^2$ by Lemma \ref{lemma:klein}.

Suppose that either $d=8$ and $S \not\simeq \mathbb{P}^1 \times \mathbb{P}^1,$ or $d=7.$ Then we get an \mbox{$\mathrm{Aut}(S)$-equivariant} map $S \to \mathbb{P}^2.$ So we have $\mathrm{Aut}(S) \subset \mathrm{PGL}_3(\mathbf{F}).$  Thus, the group~$(\mathbb{Z}/3\mathbb{Z})^3$ is not contained in $\mathrm{Aut}(S).$

Suppose that $d=6.$ Then by~\cite[Theorem 8.4.2]{DolgachevAlg} we get 
$$
\mathrm{Aut}(S) \simeq (\mathbf{F}^*)^2 \rtimes \mathrm{D}_6.
$$
\noindent The subgroup $\mathrm{D}_6 \simeq \mathrm{S}_3 \times \mathbb{Z}/2\mathbb{Z}$ in $\mathrm{Aut}(S)$ is the dihedral group of order $12$ acting on the graph of $(-1)$-curves, which is a hexagon, and~\mbox{$(\mathbf{F}^*)^2 \subset \mathrm{Aut}(S)$} acts trivially on this graph.  If there is a subgroup $(\mathbb{Z}/3\mathbb{Z})^3 \subset \mathrm{Aut}(S)$ then the projection of this subgroup to $\mathrm{D}_6$ gives us either $\mathbb{Z}/3\mathbb{Z},$ or the trivial group. Hence there is a~$(\mathbb{Z}/3\mathbb{Z})^3$-invariant triple of  pairwise skew $(-1)$-curves. Thus, we can blow them down $(\mathbb{Z}/3\mathbb{Z})^3$-equivariantly and get $\mathbb{P}^2$ with the action of the group $(\mathbb{Z}/3\mathbb{Z})^3$ which is impossible.

 Suppose that  $d=5$ or $d=4.$ By~\cite[Corollary 8.2.40]{DolgachevAlg} the automorphism group~$\mathrm{Aut}(S)$ is contained in the Weyl group $W(\mathrm{A}_4) \; \text{or} \; W(\mathrm{D}_5),$  respectively.  The order of $W(\mathrm{A}_4)$ is equal to $120=2^3 \cdot 3 \cdot 5$ and the order of $W(\mathrm{D}_5)$ is equal to~$1920=2^7 \cdot 3 \cdot 5.$ Therefore, the automorphism group of~$S$ does not contain~$(\mathbb{Z}/3\mathbb{Z})^3.$

 Suppose that $d=2.$ Then the anticanonical linear system gives us a double cover
$$
\phi_{|-K_S|}: S \to \mathbb{P}^2.
$$  
\noindent Therefore, $\phi_{|-K_S|}$  induces the following exact sequence
$$
1 \to \mathbb{Z}/2\mathbb{Z} \to \mathrm{Aut}(S) \to \mathrm{Aut}(\mathbb{P}^2).
$$
\noindent Hence, $\mathrm{Aut}(S)$ does not contain $(\mathbb{Z}/3\mathbb{Z})^3.$

 Suppose that $d=1.$ Then the base locus of the linear system $|-K_S|$ is a point~$p.$  Hence, the point $p$ is fixed by $\mathrm{Aut}(S).$ Assume that $(\mathbb{Z}/3\mathbb{Z})^3 \subset \mathrm{Aut}(S).$ Therefore, the group $(\mathbb{Z}/3\mathbb{Z})^3$  acts faithfully on the Zariski tangent space $T_p(S)$ to $S$ at $p$. Thus, we obtain 
$$
(\mathbb{Z}/3\mathbb{Z})^3 \subset \mathrm{GL}\left(T_p(S)\right) \simeq \mathrm{GL}_2(\mathbf{F}),
$$
\noindent which is impossible.

\end{proof}

\begin{Lemma}\label{lemma:conicbundle}
Let $\mathbf{F}$ be a field of characteristic different from $3.$ Let $\phi:S \to \mathbb{P}^1$ be a conic bundle over $\mathbf{F}.$ Let $G$ be the subgroup in $\mathrm{Aut}(S)$  which consists of the elements mapping every fiber of $\phi$ to a fiber of $\phi$. Then $G$  does not contain $(\mathbb{Z}/3\mathbb{Z})^3.$
\end{Lemma}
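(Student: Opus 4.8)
The plan is to separate the action of $G$ into its action on the base $\mathbb{P}^1$ and its action along the fibers, and to bound the $3$-rank of each piece by $1$ using Lemma~\ref{lemma:klein}. Since an element of $G$ carries fibers of $\phi$ to fibers of $\phi$, it induces an automorphism of the base, so there is a homomorphism
$$
\pi\colon G \to \mathrm{Aut}(\mathbb{P}^1) \simeq \mathrm{PGL}_2(\mathbf{F}), \qquad \phi \circ g = \pi(g)\circ\phi .
$$
First I would note that the image $\pi(G)$ contains no copy of $(\mathbb{Z}/3\mathbb{Z})^2$: any finite subgroup of $\mathrm{PGL}_2(\mathbf{F})$ is also a subgroup of $\mathrm{PGL}_2(\overline{\mathbf{F}}),$ and the latter contains no $(\mathbb{Z}/3\mathbb{Z})^2$ by Lemma~\ref{lemma:klein}.

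Suppose, for contradiction, that $(\mathbb{Z}/3\mathbb{Z})^3 \subset G.$ By the previous paragraph the image $\pi\bigl((\mathbb{Z}/3\mathbb{Z})^3\bigr)$ has order at most $3,$ so the subgroup
$$
N = \ker(\pi)\cap(\mathbb{Z}/3\mathbb{Z})^3
$$
has order at least $9$ and hence contains a copy of $(\mathbb{Z}/3\mathbb{Z})^2.$ Every element $g \in N$ satisfies $\phi\circ g=\phi,$ so $N$ acts by $k$-automorphisms on the generic fiber $S_\eta$ of $\phi,$ which is a smooth conic over the function field $k=\mathbf{F}(t)$ of the base. This action is faithful: an element of $N$ acting trivially on $S_\eta$ induces the identity on the function field $\mathbf{F}(S)$ and is therefore the identity on $S.$ Thus $N$ embeds into $\mathrm{Aut}_{k}(S_\eta).$

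Finally, I would pass to the algebraic closure $\overline{k}$ of $k.$ Over $\overline{k}$ the smooth conic becomes $\mathbb{P}^1_{\overline{k}},$ so base change gives an injection $\mathrm{Aut}_{k}(S_\eta) \hookrightarrow \mathrm{Aut}_{\overline{k}}(\mathbb{P}^1_{\overline{k}}) \simeq \mathrm{PGL}_2(\overline{k}).$ Composing, $N$ embeds into $\mathrm{PGL}_2(\overline{k}),$ and since $N\supset(\mathbb{Z}/3\mathbb{Z})^2$ this contradicts Lemma~\ref{lemma:klein} applied to the algebraically closed field $\overline{k},$ whose characteristic is again different from $3.$ This contradiction shows $(\mathbb{Z}/3\mathbb{Z})^3\not\subset G.$

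I expect the only delicate points to be the two places where faithfulness enters: that $N$ acts faithfully on $S_\eta$ (guaranteed by integrality of $S,$ an automorphism trivial on the dense generic fiber being trivial), and that automorphisms of the conic still inject into $\mathrm{PGL}_2$ over $\overline{k}$ even when $S_\eta$ carries no $k$-rational point, so that $S_\eta$ is a nontrivial conic. A clean alternative sidestepping the generic fiber would restrict $N$ to a general closed fiber $F_b\simeq\mathbb{P}^1$: since the fixed locus of any nontrivial finite-order automorphism of $S$ is a proper closed subset, it contains only finitely many fibers, so for general $b$ the restriction $N\to\mathrm{Aut}(F_b)\simeq\mathrm{PGL}_2$ is injective, and Lemma~\ref{lemma:klein} again yields the contradiction.
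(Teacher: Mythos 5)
Your proof is correct and follows essentially the same route as the paper: both split $G$ via the homomorphism to $\mathrm{Aut}(\mathbb{P}^1)$, bound the $3$-rank of the image and of the fiberwise kernel by $1$ using Lemma~\ref{lemma:klein}, the kernel being controlled through the generic fiber. Your treatment of the generic fiber as a possibly nontrivial conic that only becomes $\mathbb{P}^1$ after passing to $\overline{\mathbf{F}(t)}$ is in fact slightly more careful than the paper's, which identifies it with $\mathbb{P}^1_{\mathbf{F}(t)}$ outright.
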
 

\begin{proof}
We have the following exact sequence
$$
1 \to \mathrm{Aut}_{\phi}(S) \to G \to \mathrm{Aut}(\mathbb{P}^1),
$$
\noindent where $\mathrm{Aut}_{\phi}(S)$ is the group of all automorphisms of $S$ which map every fiber of~$\phi$ to itself. The group $\mathrm{Aut}_{\phi}(S)$ is contained in the automorphism group of the scheme-theoretic generic fiber $C$ of $\phi,$ which is isomorphic to $\mathbb{P}^1_{\mathbf{F}(t)},$ where $t$ is a transcendental variable.  One has
$$
\mathrm{Aut}(\mathbb{P}^1_{\mathbf{F}(t)}) \simeq \mathrm{PGL}_2(\mathbf{F}(t)).
$$
\noindent So by Lemma~\ref{lemma:klein} we get that neither $\mathrm{Aut}_{\phi}(S),$ nor $\mathrm{Aut}(\mathbb{P}^1)$ contains $(\mathbb{Z}/3\mathbb{Z})^2.$ Therefore,  $G$ does not contain $(\mathbb{Z}/3\mathbb{Z})^3.$

\end{proof}

\begin{Cor}\label{cor:notomeganot3}
Let $\mathbf{F}$ be a perfect field of characteristic different from $3$ which does not contain non-trivial cube roots of unity. Let $V$ be a Severi--Brauer surface over~$\mathbf{F}$. Then 
$$
\mathrm{Bir}(V) \not\supset (\mathbb{Z}/3\mathbb{Z})^3.
$$

\end{Cor}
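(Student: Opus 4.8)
The plan is to reduce the birational action to a biregular one on a minimal model and then dispose of each possible minimal model using the lemmas proved above. Suppose for contradiction that $G := (\mathbb{Z}/3\mathbb{Z})^3$ is contained in $\mathrm{Bir}(V)$. Since $V_{\overline{\mathbf{F}}} \simeq \mathbb{P}^2_{\overline{\mathbf{F}}}$, the surface $V$ is geometrically rational. First I would regularize the action: there is a smooth projective surface $S$ over $\mathbf{F}$ carrying a biregular $G$-action together with a $G$-equivariant birational map $S \dashrightarrow V$. Running the $G$-equivariant minimal model program over $\mathbf{F}$, I obtain a $G$-minimal smooth projective surface $S'$ over $\mathbf{F}$, still geometrically rational and carrying a biregular $G$-action. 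By the classification of $G$-minimal geometrically rational surfaces, $S'$ is either a del Pezzo surface or admits a $G$-equivariant conic bundle structure $\phi : S' \to B$.

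Then I would treat the resulting cases. If $S'$ is a del Pezzo surface of degree $d \neq 3$, then passing to $\overline{\mathbf{F}}$ gives $(\mathbb{Z}/3\mathbb{Z})^3 \subset \mathrm{Aut}(S'_{\overline{\mathbf{F}}})$, which contradicts Lemma~\ref{lemma:dP} (this disposes, in particular, of the degrees $9$, $8$, $7$ where $S'_{\overline{\mathbf{F}}}$ could be $\mathbb{P}^2$). If $S'$ is a del Pezzo surface of degree $3$, then its anticanonical linear system, being defined over $\mathbf{F}$ and very ample, realizes $S'$ as a smooth cubic surface in $\mathbb{P}^3_{\mathbf{F}}$ equipped with a biregular action of $(\mathbb{Z}/3\mathbb{Z})^3$; since $\mathbf{F}$ contains no non-trivial cube root of unity, this contradicts Lemma~\ref{lemma:cubic^3}. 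This is the only case in which the hypothesis on cube roots of unity is used, which is consistent with Proposition~\ref{propZ/3Z}\ref{eq:birnotsubset}.

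Finally, suppose $S'$ carries a $G$-equivariant conic bundle $\phi : S' \to B$. Then $G$ maps fibers of $\phi$ to fibers of $\phi$, so $G$ coincides with the fiber-preserving subgroup considered in Lemma~\ref{lemma:conicbundle}. When $B \simeq \mathbb{P}^1$ this lemma yields the contradiction directly. In general $B$ is a conic, and the same argument applies: the induced action fits into an exact sequence $1 \to \mathrm{Aut}_\phi(S') \to G \to \mathrm{Aut}(B)$, where $\mathrm{Aut}(B)$ embeds into $\mathrm{PGL}_2(\overline{\mathbf{F}})$ and $\mathrm{Aut}_\phi(S')$ embeds into $\mathrm{PGL}_2$ of the function field of $B$; by Lemma~\ref{lemma:klein} neither group contains $(\mathbb{Z}/3\mathbb{Z})^2$, so this extension cannot accommodate $(\mathbb{Z}/3\mathbb{Z})^3$, a contradiction.

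The main obstacle is the first step: justifying the regularization of the birational $G$-action and the existence of a $G$-minimal model over the non-closed (but perfect) field $\mathbf{F}$, together with the precise dichotomy between del Pezzo surfaces and conic bundles in this relative equivariant setting. Once this input from the equivariant minimal model program is in place, each case is dispatched immediately by the lemmas above, with the arithmetically delicate degree-$3$ case being exactly the one governed by Lemma~\ref{lemma:cubic^3}.
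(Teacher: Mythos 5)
Your proposal is correct and follows essentially the same route as the paper: regularize the action, run the $G$-equivariant MMP to reduce to a del Pezzo surface or a conic bundle (the paper cites \cite[Lemma 3.6]{Chen} and \cite[Lemma 14.1.1]{Prokhorov} for this step), and then dispose of the cases via Lemmas~\ref{lemma:dP}, \ref{lemma:cubic^3} and~\ref{lemma:conicbundle}. Your remark that the conic-bundle argument must be run over a general geometrically rational base $B$ rather than literally $\mathbb{P}^1$ is a point the paper glosses over, and your fix is the right one.
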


\begin{proof}
Assume that $G \simeq (\mathbb{Z}/3\mathbb{Z})^3$ is contained in $\mathrm{Bir}(V).$ Since $\mathbf{F}$ is perfect, the group $G$ acts biregularly either on a del Pezzo surface, or on a conic bundle~\mbox{$\phi:S \to B$} over a geometrically rational curve $B$ such that $\phi$ is equivariant with respect to $G$ (see, for example,~\cite[Lemma 3.6]{Chen} and~\cite[Lemma 14.1.1]{Prokhorov}). By Lemma~\ref{lemma:conicbundle} the latter case is impossible. By Lemma~\ref{lemma:dP} the group~$G$ does not act biregularly on a del Pezzo surface of degree~\mbox{$d \neq 3.$} Finally, by Lemma \ref{lemma:cubic^3}  the group~$G$ does not act biregularly on a del Pezzo surface of degree $d = 3.$

\end{proof}

Now we are ready to prove Proposition \ref{propZ/3Z}.

\begin{proof}[Proof of Proposition \ref{propZ/3Z}]
Let us prove \ref{eq:birsubset}. 
We can blow up  $V$ and get a smooth cubic surface $S$ such that the group $\mathbb{Z}/3\mathbb{Z}$ acts on $S$ fixing $3$ exceptional curves~$E_1,$~$E_2$ and~$E_3$ and cyclically permuting the other~$3$ exceptional curve~$E_4,$~$E_5$ and~$E_6.$ This follows from  Lemmas~\ref{lemma:autz/3z} and~\ref{lemma:p16} if $V$ is a non-trivial Severi--Brauer surface, and from Remark~\ref{remark:p16} for~$V \simeq \mathbb{P}^2.$ By Lemma \ref{lemma:constructionofautomorphismsz3z2} there is a biregular action of the group~$(\mathbb{Z}/3\mathbb{Z})^2$ on $S.$ This gives a birational action of $(\mathbb{Z}/3\mathbb{Z})^2$ on $V.$

Now let us prove \ref{eq:birnotsubset}. Assume that $\mathbf{F}$ contains a non-trivial cube root of unity. Then we can blow up $V$ and get a smooth cubic surface $S$ such that the group~$(\mathbb{Z}/3\mathbb{Z})^2,$ which exists by \ref{eq:birsubset} with generators~$b$ and $c$ acts on $S$ as follows: the element~$b$ fixes~$3$ exceptional curves $E_1$, $E_2$ and $E_3$ and cyclically permutes the exceptional curves  $E_4$, $E_5$ and $E_6,$ while the element $c$ fixes exceptional curves~$E_4$,~$E_5$ and $E_6$ and cyclically permutes the exceptional curves~$E_1$,~$E_2$ and $E_3.$ This follows from Lemmas~\ref{lemma:autz/3z} and~\ref{lemma:p16omega} if $V$ is a non-trivial Severi--Brauer surface, and from Remark~\ref{remark:p16omega} for~$V \simeq \mathbb{P}^2.$ By Lemma \ref{lemma:constructionofautomorphismsz3z2} there is a biregular action of the group~$(\mathbb{Z}/3\mathbb{Z})^3$ on $S.$ This gives a birational action of $(\mathbb{Z}/3\mathbb{Z})^3$ on $V.$ If $\mathbf{F}$ does not contain non-trivial cube roots of unity then by Corollary \ref{cor:notomeganot3} the group $(\mathbb{Z}/3\mathbb{Z})^3$ does not act birationally on $V.$ 

Assertion \ref{eq:birnotsubset4} follows from Theorem~\ref{ThofBeauville}\ref{ThofBeauvillebir4}.

Recall from Lemma~\ref{lemma:autz/3z} that the group~$\mathrm{Aut}(V)$ contains $\mathbb{Z}/3\mathbb{Z}.$ This gives~\ref{eq:autsubset}. If~$\mathbf{F}$ does not contain a non-trivial cube root of unity then by  Lemma~\ref{lemmaSB}  the group~$\mathrm{Aut}(V)$ does not contain~$(\mathbb{Z}/3\mathbb{Z})^2.$ Conversely,  if $\mathbf{F}$ contains a non-trivial cube root of unity then by Lemma~\ref{lemma:autz/3z} we get that $\mathrm{Aut}(V)$  contains $(\mathbb{Z}/3\mathbb{Z})^2.$ This gives~\ref{eq:autnotsubset}. Finally, from Theorem \ref{ThofBeauville}\ref{ThofBeauvilleaut3} we obtain \ref{eq:autnotsubset3}.

\end{proof}

\section{Proof of Theorem \ref{Z/3Z}}\label{section:proofoftheorem}
In this section we prove Theorem \ref{Z/3Z}.

\begin{proof}[Proof of Theorem \ref{Z/3Z}]
Let $G \subset \mathrm{Bir}(V)$ be a finite subgroup of birational automorphisms of $V.$ By Theorem \ref{ThofShramov} one has $G \subset (\mathbb{Z}/3\mathbb{Z})^3.$ By Proposition \ref{propZ/3Z}\ref{eq:birnotsubset} the group $G$ is contained in $(\mathbb{Z}/3\mathbb{Z})^2$ and by Proposition \ref{propZ/3Z}\ref{eq:birsubset} the group $(\mathbb{Z}/3\mathbb{Z})^2$ is contained in $\mathrm{Bir}(V)$.

Let $G \subset \mathrm{Aut}(V)$ be a finite subgroup of the automorphisms group of $V.$ By Theorem \ref{ThofShramov} one has $G \subset (\mathbb{Z}/3\mathbb{Z})^3.$ By Proposition \ref{propZ/3Z}\ref{eq:autnotsubset} the group $G$ is contained in $\mathbb{Z}/3\mathbb{Z}$ and by Proposition \ref{propZ/3Z}\ref{eq:autsubset} the group $\mathbb{Z}/3\mathbb{Z}$ is contained in $\mathrm{Bir}(V)$.

\end{proof}

\begin{Remark}
One of the facts used in the proof of~\cite[Theorem 1.3]{Shramov1} is the following assertion: the automorphism group of a Severi--Brauer surface $V$ over $\mathbb{Q}$ does not contain elements of prime order $p \geqslant 5.$ This fact follows, for instance, from~\cite[Theorem 6]{Serre}. Also, there are two alternative proofs of this fact provided in~\mbox{\cite[Lemma~7.1]{Shramov1}.} Unfortunately, the first of these proofs contains a gap: it treats identification~\mbox{$V_{\overline{\mathbb{Q}}} \simeq \mathbb{P}^2_{\overline{\mathbb{Q}}}$} as a Galois-invariant isomorphism, while it is obviously not Galois-invariant if $V$ is non-trivial. The same kind of gap is present in the proof of~\cite[Lemma 5.2]{Shramov1} (cf.~\cite[Lemma~5.3]{Shramov1}, where a similar trouble is luckily avoided). One can find a corrected proof of~\cite[Lemma~5.2]{Shramov1}  in  Example~\ref{exampleQF}.

\end{Remark}

%
%
%

\providecommand{\bysame}{\leavevmode\hbox to3em{\hrulefill}\thinspace}
\providecommand{\MR}{\relax\ifhmode\unskip\space\fi MR }
\providecommand{\MRhref}[2]{%
  \href{http://www.ams.org/mathscinet-getitem?mr=#1}{#2}
}
\providecommand{\href}[2]{#2}

\end{document}